\documentclass[12pt,oneside,english]{amsart}
\usepackage[T1]{fontenc}
\usepackage[utf8]{inputenc}
\usepackage{geometry}
\geometry{verbose,tmargin=4cm,bmargin=4cm,lmargin=4cm,rmargin=4cm}
\setcounter{tocdepth}{5}
\synctex=-1
\usepackage{enumitem}
\usepackage{amstext}
\usepackage{amsthm}
\usepackage{amssymb}
\usepackage{graphicx}
\usepackage{todonotes}

\makeatletter
\numberwithin{equation}{section}
\numberwithin{figure}{section}
\theoremstyle{plain}
\newtheorem{thm}{\protect\theoremname}[section]
\theoremstyle{definition}
\newtheorem{problem}[thm]{\protect\problemname}
\theoremstyle{plain}
\newtheorem{cor}[thm]{\protect\corollaryname}
\theoremstyle{remark}
\newtheorem{rem}[thm]{\protect\remarkname}
\theoremstyle{definition}
\newtheorem{defn}[thm]{\protect\definitionname}
\theoremstyle{plain}
\newtheorem{prop}[thm]{\protect\propositionname}
\theoremstyle{plain}
\newtheorem{lem}[thm]{\protect\lemmaname}
\theoremstyle{remark}
\newtheorem*{claim*}{\protect\claimname}
\theoremstyle{definition}
\newtheorem{example}[thm]{\protect\examplename}

\usepackage{hyperref}

\makeatother

\usepackage{babel}
\providecommand{\claimname}{Claim}
\providecommand{\corollaryname}{Corollary}
\providecommand{\definitionname}{Definition}
\providecommand{\examplename}{Example}
\providecommand{\lemmaname}{Lemma}
\providecommand{\problemname}{Problem}
\providecommand{\propositionname}{Proposition}
\providecommand{\remarkname}{Remark}
\providecommand{\theoremname}{Theorem}

\begin{document}
\title{F-blowups and essential divisors for toric varieties}
\author{Enrique Chávez-Martínez, Daniel Duarte, and Takehiko Yasuda}
\address{Instituto de Ingeniería y Tecnología, UACJ, Ciudad Juárez, Chihuahua,
México}
\email{enrique.chavez@uacj.mx}
\address{Centro de Ciencias Matem\'aticas, UNAM, Campus Morelia, Morelia, Michoac\'an, M\'exico}
\email{adduarte@matmor.unam.mx}
\address{Department of Mathematics, Graduate School of Science, Osaka University
Toyonaka, Osaka 560-0043, JAPAN}
\email{yasuda.takehiko.sci@osaka-u.ac.jp}
\thanks{We thank Nobuo Hara, Yutaro Kaijima and an anonymous referee for helpful comments. D.~Duarte was partially supported by PAPIIT IN117523. T.~Yasuda
was supported by JSPS KAKENHI Grant Numbers JP18H01112, JP21H04994
and JP23H01070.}
\begin{abstract}
We investigate the relation between essential divisors and F-blowups,
in particular, address the problem whether all essential divisors
appear on the $e$-th F-blowup for large enough $e$. Focusing on
the case of normal affine toric varieties, we establish a simple sufficient
condition for a divisor over the given toric variety to appear on
the normalized limit F-blowup as a prime divisor. As a corollary,
we show that if a normal toric variety has a crepant resolution, then
the above problem has a positive answer, provided that we use the notion of essential divisors in the sense of Bouvier and Gonzalez-Sprinberg. We also provide an example
of toric threefold singularities for which a non-essential divisor
appears on an F-blowup. 
\end{abstract}

\maketitle
\global\long\def\bigmid{\mathrel{}\middle|\mathrel{}}%

\global\long\def\AA{\mathbb{A}}%

\global\long\def\CC{\mathbb{C}}%

\global\long\def\FF{\mathbb{F}}%

\global\long\def\GG{\mathbb{G}}%

\global\long\def\LL{\mathbb{L}}%

\global\long\def\MM{\mathbb{M}}%

\global\long\def\NN{\mathbb{N}}%

\global\long\def\PP{\mathbb{P}}%

\global\long\def\QQ{\mathbb{Q}}%

\global\long\def\RR{\mathbb{R}}%

\global\long\def\SS{\mathbb{S}}%

\global\long\def\ZZ{\mathbb{Z}}%

\global\long\def\bA{\mathbf{A}}%

\global\long\def\ba{\mathbf{a}}%

\global\long\def\bb{\mathbf{b}}%

\global\long\def\bd{\mathbf{d}}%

\global\long\def\bf{\mathbf{f}}%

\global\long\def\bg{\mathbf{g}}%

\global\long\def\bh{\mathbf{h}}%

\global\long\def\bj{\mathbf{j}}%

\global\long\def\bm{\mathbf{m}}%

\global\long\def\bp{\mathbf{p}}%

\global\long\def\bq{\mathbf{q}}%

\global\long\def\br{\mathbf{r}}%

\global\long\def\bs{\mathbf{s}}%

\global\long\def\bt{\mathbf{t}}%

\global\long\def\bv{\mathbf{v}}%

\global\long\def\bw{\mathbf{w}}%

\global\long\def\bx{\boldsymbol{x}}%

\global\long\def\by{\boldsymbol{y}}%

\global\long\def\bz{\mathbf{z}}%

\global\long\def\cA{\mathcal{A}}%

\global\long\def\cB{\mathcal{B}}%

\global\long\def\cC{\mathcal{C}}%

\global\long\def\cD{\mathcal{D}}%

\global\long\def\cE{\mathcal{E}}%

\global\long\def\cF{\mathcal{F}}%

\global\long\def\cG{\mathcal{G}}%

\global\long\def\cH{\mathcal{H}}%

\global\long\def\cI{\mathcal{I}}%

\global\long\def\cJ{\mathcal{J}}%

\global\long\def\cK{\mathcal{K}}%

\global\long\def\cL{\mathcal{L}}%

\global\long\def\cM{\mathcal{M}}%

\global\long\def\cN{\mathcal{N}}%

\global\long\def\cO{\mathcal{O}}%

\global\long\def\cP{\mathcal{P}}%

\global\long\def\cQ{\mathcal{Q}}%

\global\long\def\cR{\mathcal{R}}%

\global\long\def\cS{\mathcal{S}}%

\global\long\def\cT{\mathcal{T}}%

\global\long\def\cU{\mathcal{U}}%

\global\long\def\cV{\mathcal{V}}%

\global\long\def\cW{\mathcal{W}}%

\global\long\def\cX{\mathcal{X}}%

\global\long\def\cY{\mathcal{Y}}%

\global\long\def\cZ{\mathcal{Z}}%

\global\long\def\fa{\mathfrak{a}}%

\global\long\def\fb{\mathfrak{b}}%

\global\long\def\fc{\mathfrak{c}}%

\global\long\def\ff{\mathfrak{f}}%

\global\long\def\fj{\mathfrak{j}}%

\global\long\def\fm{\mathfrak{m}}%

\global\long\def\fp{\mathfrak{p}}%

\global\long\def\fs{\mathfrak{s}}%

\global\long\def\ft{\mathfrak{t}}%

\global\long\def\fx{\mathfrak{x}}%

\global\long\def\fv{\mathfrak{v}}%

\global\long\def\fD{\mathfrak{D}}%

\global\long\def\fJ{\mathfrak{J}}%

\global\long\def\fG{\mathfrak{G}}%

\global\long\def\fK{\mathfrak{K}}%

\global\long\def\fM{\mathfrak{M}}%

\global\long\def\fO{\mathfrak{O}}%

\global\long\def\fS{\mathfrak{S}}%

\global\long\def\fV{\mathfrak{V}}%

\global\long\def\fX{\mathfrak{X}}%

\global\long\def\fY{\mathfrak{Y}}%

\global\long\def\ru{\mathrm{u}}%

\global\long\def\rv{\mathbf{\mathrm{v}}}%

\global\long\def\rw{\mathrm{w}}%

\global\long\def\rx{\mathrm{x}}%

\global\long\def\ry{\mathrm{y}}%

\global\long\def\rz{\mathrm{z}}%

\global\long\def\a{\mathrm{a}}%

\global\long\def\AdGp{\mathrm{AdGp}}%

\global\long\def\Aff{\mathbf{Aff}}%

\global\long\def\Alg{\mathbf{Alg}}%

\global\long\def\age{\operatorname{age}}%

\global\long\def\Ann{\mathrm{Ann}}%

\global\long\def\Aut{\operatorname{Aut}}%

\global\long\def\B{\operatorname{\mathrm{B}}}%

\global\long\def\Bl{\mathrm{Bl}}%

\global\long\def\c{\mathrm{c}}%

\global\long\def\C{\operatorname{\mathrm{C}}}%

\global\long\def\calm{\mathrm{calm}}%

\global\long\def\center{\mathrm{center}}%

\global\long\def\characteristic{\operatorname{char}}%

\global\long\def\cjun{c\textrm{-jun}}%

\global\long\def\codim{\operatorname{codim}}%

\global\long\def\Coker{\mathrm{Coker}}%

\global\long\def\Conj{\operatorname{Conj}}%

\global\long\def\D{\mathrm{D}}%

\global\long\def\Df{\mathrm{Df}}%

\global\long\def\diag{\mathrm{diag}}%

\global\long\def\det{\operatorname{det}}%

\global\long\def\discrep#1{\mathrm{discrep}\left(#1\right)}%

\global\long\def\doubleslash{\sslash}%

\global\long\def\E{\operatorname{E}}%

\global\long\def\Emb{\operatorname{Emb}}%

\global\long\def\et{\textrm{et}}%

\global\long\def\etop{\mathrm{e}_{\mathrm{top}}}%

\global\long\def\el{\mathrm{e}_{l}}%

\global\long\def\Exc{\mathrm{Exc}}%

\global\long\def\FB{\mathrm{FB}}%

\global\long\def\FBtilde{\widetilde{\mathrm{FB}}}%

\global\long\def\FConj{F\textrm{-}\Conj}%

\global\long\def\Fitt{\operatorname{Fitt}}%

\global\long\def\Fr{\mathrm{Fr}}%

\global\long\def\Gal{\operatorname{Gal}}%

\global\long\def\GalGps{\mathrm{GalGps}}%

\global\long\def\GL{\mathrm{GL}}%

\global\long\def\Grass{\mathrm{Grass}}%

\global\long\def\H{\operatorname{\mathrm{H}}}%

\global\long\def\hattimes{\hat{\times}}%

\global\long\def\hatotimes{\hat{\otimes}}%

\global\long\def\hd{\mathbf{h}}%

\global\long\def\Hilb{\mathrm{Hilb}}%

\global\long\def\Hodge{\mathrm{Hodge}}%

\global\long\def\Hom{\operatorname{Hom}}%

\global\long\def\hyphen{\textrm{-}}%

\global\long\def\I{\operatorname{\mathrm{I}}}%

\global\long\def\id{\mathrm{id}}%

\global\long\def\Image{\operatorname{\mathrm{Im}}}%

\global\long\def\In{\mathrm{In}}%

\global\long\def\ind{\mathrm{ind}}%

\global\long\def\injlim{\varinjlim}%

\global\long\def\Inn{\mathrm{Inn}}%

\global\long\def\iper{\mathrm{iper}}%

\global\long\def\Iso{\operatorname{Iso}}%

\global\long\def\isoto{\xrightarrow{\sim}}%

\global\long\def\J{\operatorname{\mathrm{J}}}%

\global\long\def\Jac{\mathrm{Jac}}%

\global\long\def\kConj{k\textrm{-}\Conj}%

\global\long\def\KConj{K\textrm{-}\Conj}%

\global\long\def\Ker{\operatorname{Ker}}%

\global\long\def\Kzero{\operatorname{K_{0}}}%

\global\long\def\lcr{\mathrm{lcr}}%

\global\long\def\lcm{\operatorname{\mathrm{lcm}}}%

\global\long\def\length{\operatorname{\mathrm{length}}}%

\global\long\def\lwid{\operatorname{\mathrm{lwid}}}%

\global\long\def\M{\operatorname{\mathrm{M}}}%

\global\long\def\MC{\mathrm{MC}}%

\global\long\def\MHS{\mathbf{MHS}}%

\global\long\def\mld{\mathrm{mld}}%

\global\long\def\mod#1{\pmod{#1}}%

\global\long\def\Mov{\overline{\mathrm{Mov}}}%

\global\long\def\mRep{\mathbf{mRep}}%

\global\long\def\mult{\mathrm{mult}}%

\global\long\def\N{\operatorname{\mathrm{N}}}%

\global\long\def\Nef{\mathrm{Nef}}%

\global\long\def\nor{\mathrm{nor}}%

\global\long\def\nr{\mathrm{nr}}%

\global\long\def\NS{\mathrm{NS}}%

\global\long\def\op{\mathrm{op}}%

\global\long\def\orb{\mathrm{orb}}%

\global\long\def\ord{\operatorname{ord}}%

\global\long\def\P{\operatorname{P}}%

\global\long\def\PEff{\overline{\mathrm{Eff}}}%

\global\long\def\PGL{\mathrm{PGL}}%

\global\long\def\pt{\mathbf{pt}}%

\global\long\def\pur{\mathrm{pur}}%

\global\long\def\perf{\mathrm{perf}}%

\global\long\def\perm{\mathrm{perm}}%

\global\long\def\perpinner{(\perp)}%

\global\long\def\Pic{\mathrm{Pic}}%

\global\long\def\pr{\mathrm{pr}}%

\global\long\def\Proj{\operatorname{Proj}}%

\global\long\def\projlim{\varprojlim}%

\global\long\def\Qbar{\overline{\QQ}}%

\global\long\def\QConj{\mathbb{Q}\textrm{-}\Conj}%

\global\long\def\R{\operatorname{\mathrm{R}}}%

\global\long\def\Ram{\operatorname{\mathrm{Ram}}}%

\global\long\def\rank{\operatorname{\mathrm{rank}}}%

\global\long\def\rat{\mathrm{rat}}%

\global\long\def\Ref{\mathrm{Ref}}%

\global\long\def\rig{\mathrm{rig}}%

\global\long\def\red{\mathrm{red}}%

\global\long\def\reg{\mathrm{reg}}%

\global\long\def\rep{\mathrm{rep}}%

\global\long\def\Rep{\mathbf{Rep}}%

\global\long\def\sbrats{\llbracket s\rrbracket}%

\global\long\def\Sch{\mathbf{Sch}}%

\global\long\def\sep{\mathrm{sep}}%

\global\long\def\Set{\mathbf{Set}}%

\global\long\def\sing{\mathrm{sing}}%

\global\long\def\sm{\mathrm{sm}}%

\global\long\def\SL{\mathrm{SL}}%

\global\long\def\Sp{\operatorname{Sp}}%

\global\long\def\Span{\operatorname{Span}}%

\global\long\def\Spec{\operatorname{Spec}}%

\global\long\def\Spf{\operatorname{Spf}}%

\global\long\def\ss{\mathrm{ss}}%

\global\long\def\st{\mathrm{st}}%

\global\long\def\Stab{\operatorname{Stab}}%

\global\long\def\Supp{\operatorname{Supp}}%

\global\long\def\spars{\llparenthesis s\rrparenthesis}%

\global\long\def\Sym{\mathrm{Sym}}%

\global\long\def\T{\operatorname{T}}%

\global\long\def\tame{\mathrm{tame}}%

\global\long\def\tbrats{\llbracket t\rrbracket}%

\global\long\def\tl{\mathbf{t}}%

\global\long\def\top{\mathrm{top}}%

\global\long\def\tors{\mathrm{tors}}%

\global\long\def\tpars{\llparenthesis t\rrparenthesis}%

\global\long\def\Tr{\mathrm{Tr}}%

\global\long\def\ulAut{\operatorname{\underline{Aut}}}%

\global\long\def\ulHom{\operatorname{\underline{Hom}}}%

\global\long\def\ulInn{\operatorname{\underline{Inn}}}%

\global\long\def\ulIso{\operatorname{\underline{{Iso}}}}%

\global\long\def\ulSpec{\operatorname{\underline{{Spec}}}}%

\global\long\def\Utg{\operatorname{Utg}}%

\global\long\def\Unt{\operatorname{Unt}}%

\global\long\def\Var{\mathbf{Var}}%

\global\long\def\vector{\mathbf{v}}%

\global\long\def\Vol{\mathrm{Vol}}%

\global\long\def\Y{\operatorname{\mathrm{Y}}}%

\section{Introduction}

F-blowups, which were introduced in \cite{yasuda2012universal}, associate
a canonical sequence of blowups to a given variety in positive characteristic,
and are analogs of higher Nash blowups \cite{yasuda2007highernash}.
Let $X$ be a variety over a field $k$ of characteristic
$p>0$ and let $e$ be a positive integer. The $e$-th F-blowup of
$X$, denoted by $\FB_{e}(X)$, is defined to be the universal birational
flattening of the $e$-iterate $k$-linear Frobenius morphism $F^{e}\colon X_{e}\to X$.
It is also possible to describe it as a certain closed subscheme of
a Hilbert scheme, which parametrizes fat points or jets (see Remark
\ref{rem:FB in Hilb}). As is common in singularity theory, we often
prefer to work with normal varieties and hence consider the normalization
$\FBtilde_{e}(X)$ of $\FB_{e}(X)$, which we call a normalized F-blowup. 

The problem on F-blowups that first comes to one's mind might be whether,
given a variety $X$, $\FB_{e}(X)$ (or $\FBtilde_{e}(X)$) is smooth
for $e\gg0$. However, the answer is negative in general (see Remark
\ref{rem:G-Hilb}), although the answer is known to be affirmative
for curves and F-regular surfaces \cite{yasuda2012universal,hara2011splitting2,hara2012fblowups}
for instances. In light of this, we would like to raise another question.
To do so, we recall that an \emph{essential divisor} over $X$ means
a prime divisor ``appearing'' on every resolution of $X$. There are several variations on the notion of essential divisors, depending on what condition is put on resolutions and what ``appearing on a resolution'' means. See Appendix
for details on relations among these variations. 
 The following is the central question that this paper
seeks to address:
\begin{problem}
\label{prob:intro}For a normal variety $X$ and $e\gg0$, do all
essential divisors over $X$ appear on $\FB_{e}(X)$ as prime divisors?
\end{problem}

This problem may be regarded as an analog of the famous Nash problem
concerning relation between essential divisors and arc spaces (for
example, see \cite{plenat2015thenash}), since an F-blowup is a parameter
space of jets as mentioned above and jets are variations of arcs.
Prior research on F-blowups in the literature shows that, in some
instances, the problem has a positive answer; F-regular surface singularities
\cite{yasuda2012universal,hara2011splitting2,hara2012fblowups}, simple
elliptic singularities \cite{hara2015structure,hara2013fblowups},
tame Gorenstein quotient singularities of dimension three \cite{toda2009noncommutative,yasuda2012universal},
and more generally, Gorenstein linearly reductive quotient singularities
of dimension three \cite{liedtkenoncommutative}. Note that in these cases, where varieties in question are either two-dimensional or \(\QQ\)-factorial, all the notions of essential exceptional divisors coincide (Proposition \ref{prop:simplicial}).
Another source of
our motivation for the above problem is a result of Chávez-Martínez
\cite{chavez-martinez2021factorization} that a similar problem for
higher Nash blowups has a positive answer if $X$ is a normal surface
with an $A_{n}$-singularity in characteristic zero. In this sense,
Nash's two approaches to singularities, Nash blowups and arc spaces,
come together. 

In this paper, we study Problem \ref{prob:intro} in the case where
$X$ is a normal affine toric variety say defined by a cone $\sigma\subset N_{\RR}=\RR^{d}$
which is strongly convex, rational polyhedral and $d$-dimensional.
Here we follow the standard toric notation that $M$ and $N$ are
free abelian groups of rank $d$ which are dual to each other, and
$M_{\RR}$ and $N_{\RR}$ denote $M\otimes\RR$ and $N\otimes\RR$,
respectively. In fact, as far as toric varieties are concerned, we
can generalize F-blowups as follows. For each positive integer $l$,
there exists a Frobenius-like morphism $F^{(l)}\colon X_{(l)}\to X$
(regardless of the characteristic of the base field), which is induced
by the inclusion $M\hookrightarrow(1/l)M$ of lattices, where $M$
denotes the lattice of characters of a torus as usual in toric geometry.
Using Frobenius-like morphisms, we can similarly define the $(l)$-th
F-blowup of $X$, denoted by $\FB_{(l)}(X)$ in any characteristic.
If the characteristic is $p>0$, then $\FB_{(p^{e})}(X)=\FB_{e}(X)$.
The normalization $\FBtilde_{(l)}(X)$ of $\FB_{(l)}(X)$ is again
a normal toric variety and thus corresponds to a subdivision of the
cone $\sigma$. From \cite[Theorem 1.8]{yasuda2012universal}, the
sequence $(\FB_{(l)}(X))_{l\in\ZZ_{>0}}$ as well as the sequence
$(\FBtilde_{(l)}(X))_{l\in\ZZ_{>0}}$ stabilizes. The resulting blowups
obtained by stabilization are denoted by $\FB_{(\infty)}(X)$ and
$\FBtilde_{(\infty)}(X)$; we call them the \emph{limit F-blowup}
and \emph{normalized limit F-blowup} of $X$, respectively. The fan
corresponding to $\FBtilde_{(\infty)}(X)$ depends not on the base
field but only on the given cone $\sigma$. We denote this fan by
$\Delta$. The following theorem is our main result:
\begin{thm}[Theorem \ref{thm: inner lattice pt}]
Let $w\in\sigma\cap N$ be a primitive lattice vector in the cone
$\sigma$. Suppose that there exists $a\in(\sigma^{\vee})^{\circ}\cap M$
with $(a,w)=1$, where $\sigma^{\vee}\subset M_{\RR}$ is the dual
cone of $\sigma$ and $(\sigma^{\vee})^{\circ}$ is its interior.
Then, the ray $\RR_{\ge0}w$ belongs to the fan $\Delta$. Equivalently,
the divisor over $X$ corresponding to the ray $\RR_{\ge0}w$ appears
on the normalized limit F-blowup $\FBtilde_{(\infty)}(X)$ as a prime
divisor. 
\end{thm}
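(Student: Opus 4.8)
The plan is to reduce the statement to a combinatorial assertion about the fan $\Delta$ and then to verify that assertion by a hands-on construction of cones. Recall from the combinatorial description of limit F-blowups that, for a positive integer $l$, the module $F^{(l)}_{*}\cO_{X_{(l)}}$ over $R:=k[\sigma^{\vee}\cap M]$ is the direct sum, over the classes $\bar u\in(1/l)M/M$, of the fractional monomial ideals supported on the cosets $G_{\bar u}:=(u+M)\cap\sigma^{\vee}$; consequently $\FBtilde_{(l)}(X)$ is the toric variety of the common refinement of the normal fans of the Newton polyhedra $Q_{\bar u}:=\operatorname{conv}(G_{\bar u})$ (which have recession cone $\sigma^{\vee}$, so $Q_{\bar u}=Q_{\bar u}+\sigma^{\vee}$), and $\Delta$ is the common refinement of all of these over all $l$ and $\bar u$. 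Denoting by $Q_{\bar u}^{w}$ the face of $Q_{\bar u}$ on which $m\mapsto(m,w)$ is minimised, the cone of $\Delta$ that contains $w$ in its relative interior is the intersection, over all $l$ and $\bar u$, of the normal cones (in $Q_{\bar u}$) of the faces $Q_{\bar u}^{w}$; its linear span is therefore the orthogonal complement of $\sum_{l,\bar u}\operatorname{lin}(Q_{\bar u}^{w})\subseteq w^{\perp}$, and so $\RR_{\ge0}w$ is a ray of $\Delta$ exactly when those subspaces span $w^{\perp}$. Hence it will be enough to exhibit finitely many classes $\bar u$ such that the difference vectors of the sets $A_{\bar u}$ of points of $G_{\bar u}$ minimising $(\,\cdot\,,w)$, together with the linear span of $\sigma^{\vee}\cap w^{\perp}$, span $w^{\perp}$.

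If $w$ lies on a ray of $\sigma$ then $\sigma^{\vee}\cap w^{\perp}$ is already a facet of $\sigma^{\vee}$ and there is nothing to prove; so the substance of the argument is the case $w\in(\sigma)^{\circ}$, where $\sigma^{\vee}\cap w^{\perp}=\{0\}$. (Here every point of $G_{\bar u}$ minimising $(\,\cdot\,,w)$ is automatically a minimal generator of $G_{\bar u}$, since $S\cap w^{\perp}=0$ forces subtraction of any nonzero element of $S=\sigma^{\vee}\cap M$ to lower the $w$-height and hence to leave $\sigma^{\vee}$; thus $Q_{\bar u}^{w}=\operatorname{conv}(A_{\bar u})$.) This is where I would use the hypothesis. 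Since $a\in(\sigma^{\vee})^{\circ}$ and $(a,w)=1$, the slice $P_{1}:=\sigma^{\vee}\cap\{(\,\cdot\,,w)=1\}$ is a $(d-1)$-dimensional polytope with $a$ in its relative interior, and the dilations $cP_{1}=\sigma^{\vee}\cap\{(\,\cdot\,,w)=c\}$ fatten in every direction as $c$ grows. Put $L:=w^{\perp}\cap M$ and let $k$ be the least positive integer for which $kP_{1}$ contains, in its interior, a translate of a fundamental parallelepiped of $L$. I would then choose a rational point $u$ of $w$-height $k$ sitting inside that region, so that the top layer $(u+L)\cap kP_{1}$ of the coset $\bar u$ already contains $d$ affinely independent points; by minimality of $k$, none of $P_{1},2P_{1},\dots,(k-1)P_{1}$ meets every translate of $L$, and the remaining freedom in $u$ (its class modulo $L$, together with an auxiliary $b\in M$ with $(b,w)=1$ used to index the layers) should be enough to push every lower layer of the coset off $\sigma^{\vee}$, so that $A_{\bar u}$ equals this spanning top layer. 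Carrying this out once for each member of a basis of $L$ produces classes $\bar u$ with $\sum\operatorname{lin}(Q_{\bar u}^{w})=w^{\perp}$, and the first paragraph then gives $\RR_{\ge0}w\in\Delta$, i.e.\ the divisor $E_{w}$ appears on $\FBtilde_{(\infty)}(X)$.

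The step I expect to be the real obstacle is the last one: for a single coset, making the top layer $(d-1)$-dimensional while forcing all lower layers to miss $\sigma^{\vee}$, the two demands pulling against each other because the layers of a fixed coset are rigid translates of one another. This is precisely what interiority of $a$ at height $1$ should resolve --- it makes $P_{1}$ full-dimensional, so that large dilations $kP_{1}$ do contain fundamental domains of $L$ (supplying a top layer that spans) while the smaller dilations stay thin (leaving room to clear the lower layers). Without interiority the construction collapses, in agreement with the fact that for $\AA^{d}$ with $d\ge2$ the fan $\Delta$ has no interior ray, and indeed no $a\in(\sigma^{\vee})^{\circ}\cap M$ with $(a,w)=1$ exists for $w$ in the interior.
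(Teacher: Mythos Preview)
Your first paragraph is correct and equivalent to the paper's setup: the fan $\Delta$ can be read off as the common refinement of the normal fans of the Newton polyhedra $Q_{\bar u}$, and the minimal cone of $\Delta$ containing $w$ has linear span orthogonal to $\sum_{\bar u}\operatorname{lin}(Q_{\bar u}^{w})$. In the paper's language, the difference vectors of points in $A_{\bar u}$ are exactly the associated vectors of $w$-critical arrows, and the task is the same: produce enough of them to span $w^{\perp}$ modulo $\mu^{\perp}$.

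The gap is in your construction. The inference ``by minimality of $k$, none of $P_{1},\dots,(k-1)P_{1}$ meets every translate of $L$'' does not follow: a polytope can meet every coset of the lattice $L=w^{\perp}\cap M$ without containing a fundamental parallelepiped in its interior (a long thin simplex, for instance). Even if each $jP_{1}$ with $j<k$ did miss some $L$-coset, you would still need a \emph{single} residue $u_{0}\bmod L$ that lies in a missed coset at every level $1,\dots,k-1$ simultaneously while making the level-$k$ layer affinely span; nothing in your setup forces these finitely many constraints to be compatible, and you acknowledge this yourself as ``the real obstacle.'' The sentence about ``carrying this out once for each member of a basis of $L$'' does not help, since the construction as written aims at a full-dimensional top layer rather than a single prescribed direction.

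The paper closes this gap by a different mechanism. It does not try to empty all lower layers of a generic coset. Instead, given any proper linear subspace $L'\subsetneq w^{\perp}$ (modulo $\mu^{\perp}$), it picks a \emph{vertex} $P$ of $\Lambda_{(=1)}$ with $(P+L')\cap\Lambda_{(=1)}^{\circ}=\emptyset$, and takes the \emph{minimal} $c>0$ for which some integral arrow in $\Lambda_{(=c)}$ has tail $cP$ and direction outside $L'$. The hypothesis $a\in\Lambda_{(=1)}^{\circ}\cap M$ is then used once, sharply: if there were a $cP$-integral point $Q$ at a lower level $c-n$, then $R:=Q+na\in\Lambda_{(=c)}^{\circ}$ would give an arrow $(R,cP)$ with direction $R-cP\notin L'$ (since $cP+L'$ avoids the interior), and interiority of $R$ lets one replace $c$ by $(1-\varepsilon)c$ while keeping the arrow inside $A_{\RR}$, contradicting minimality of $c$. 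Thus the coset of $cP$ has no members below level $c$, the arrow is $w$-critical, and one obtains a critical direction outside any prescribed proper subspace---exactly what is needed. The vertex condition on $P$ together with the minimality of $c$ does the work your ``remaining freedom in $u$'' was meant to do. A smaller point: you pass from ``$w$ on a ray of $\sigma$'' directly to ``$w\in\sigma^{\circ}$,'' but the intermediate case $w\in\mu^{\circ}$ for a proper face $\mu$ of dimension $>1$ also needs an argument; the paper handles it by passing to the quotient $M_{\RR}/\mu^{\perp}$, where the slice becomes a genuine polytope.
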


To apply this theorem to Problem \ref{prob:intro}, we introduce the
notion of \emph{moderate toric resolutions} (Definition \ref{def:moderate}).
Instead of giving its definition, we only mention here that it includes
minimal resolutions of normal toric surfaces and crepant toric resolutions. We also need to clarify which version of essential divisors we adopt; we adopt the version due to  Bouvier and Gonzalez-Sprinberg \cite{bouvier1995systeme}. 
We say that a toric divisor over \(X \) is \emph{BGS essential} if it appears on every toric resolution of \(X\) as a prime divisor.

\begin{cor}[Corollary \ref{cor:moderate-ess}]
If $X$ admits a moderate toric resolution, then every BGS essential 
divisor over \(X\) appears on $\FBtilde_{(\infty)}(X)$ as a prime divisor.
\end{cor}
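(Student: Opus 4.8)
The plan is to deduce the corollary directly from Theorem~\ref{thm: inner lattice pt}: given a BGS essential divisor, I would first locate it as a ray of the moderate resolution, and then use moderateness to produce the interior lattice point that that theorem requires. So fix a moderate toric resolution $f\colon Y\to X$ with fan $\Sigma$ --- necessarily a smooth subdivision of $\sigma$ --- and let $D$ be an arbitrary BGS essential divisor over $X$. The goal is to show that $D$ occurs as a ray of the fan $\Delta$ of $\FBtilde_{(\infty)}(X)$.

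First I would translate the hypothesis on $D$ into fan language. By the toric dictionary a toric divisor over $X$ is an invariant valuation $\ord_w$ for a primitive $w\in N$ with $w\in\sigma$, and it ``appears'' on a toric resolution precisely when $\RR_{\ge 0}w$ is a ray of that resolution's fan. Since $D$ is BGS essential it appears on every toric resolution, in particular on $f$; hence $\rho:=\RR_{\ge 0}w$ is a ray of $\Sigma$ and $w\in\sigma\cap N$ is primitive. Now pick a maximal cone $\tau$ of $\Sigma$ having $\rho$ as a face. As $Y$ is smooth, the primitive ray generators $w=w_1,\dots,w_d$ of $\tau$ form a $\ZZ$-basis of $N$; writing $a_1,\dots,a_d\in M$ for the dual basis and $m_\tau:=a_1+\dots+a_d$, one gets $(m_\tau,w_j)=1$ for all $j$, in particular $(m_\tau,w)=1$.

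The one place where \emph{moderateness} (as opposed to mere resolution of singularities) is used is the assertion $m_\tau\in(\sigma^\vee)^{\circ}$, which I expect to be built into Definition~\ref{def:moderate}. Granting it, $a:=m_\tau\in(\sigma^\vee)^{\circ}\cap M$ satisfies $(a,w)=1$ and $w\in\sigma\cap N$ is primitive, so Theorem~\ref{thm: inner lattice pt} applies and yields $\rho\in\Delta$; equivalently $D$ appears on $\FBtilde_{(\infty)}(X)$ as a prime divisor. Since $D$ was arbitrary, the corollary follows.

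In this scheme the substantive content is Theorem~\ref{thm: inner lattice pt}; the corollary is bookkeeping, with ``BGS essential'' used to force $w$ to be a ray of the \emph{moderate} resolution (not merely of some resolution) and ``moderate'' used to supply the interior lattice point pairing to $1$ with $w$. Accordingly, the only step I would expect to need care is matching the precise form of Definition~\ref{def:moderate} to the cone-wise statement $m_\tau\in(\sigma^\vee)^{\circ}$ used above, together with the (separately recorded) verification that the advertised classes are moderate: for a crepant resolution the $m_\tau$ all coincide with the element $m_\sigma\in M$ determined by $(m_\sigma,v)=1$ on the ray generators $v$ of $\sigma$, which lies in $(\sigma^\vee)^{\circ}$ because it is strictly positive on $\sigma\setminus\{0\}$; and for the minimal resolution of a toric surface a short induction along the Hirzebruch--Jung chain shows $(m_\tau,v)\ge 1$ on the two ray generators $v$ of $\sigma$, giving $m_\tau\in(\sigma^\vee)^{\circ}$ again.
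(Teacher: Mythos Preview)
Your proof is correct and follows essentially the same route as the paper. The paper's proof invokes Lemmas~\ref{lem:affine-intersect-rays} and~\ref{lem:aff hyper}, whose combined content is exactly your assertion that $m_\tau=\sum_i a_i\in(\sigma^\vee)^\circ$; Definition~\ref{def:moderate} is indeed phrased so that this holds for every nondegenerate cone $\tau\in\Sigma$, and then Theorem~\ref{thm: inner lattice pt} (in its form $(\Lambda_{(=1)})^\circ\cap M\ne\emptyset$, which is equivalent to the existence of $a\in(\sigma^\vee)^\circ\cap M$ with $(a,w)=1$) finishes the argument just as you wrote.
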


\begin{rem}
Ishii and Kollár \cite{ishii2003thenash} considered notions of essential divisors, divisorially essential divisors, and toric divisorially essential divisors and showed that these three notions coincide if the given variety is a normal toric variety; they worked over an algebraically closed field, but this result holds over any base field (Proposition \ref{prop:ishii-kollar}). We see that every BGS essential divisor over a normal toric variety  \(X\) is either a toric prime divisor on \(X\) itself or an essential divisor in the sense of Ishii and Kollár. Moreover, the converse is true if \(X\) is \(\QQ\)-factorial (Proposition \ref{prop:simplicial}).
\end{rem}

We also address the following problem which is ``opposite'' to Problem
\ref{prob:intro}:
\begin{problem}
\label{prob:only-ess}Do \emph{only} essential divisors appear on
$\FBtilde_{(\infty)}(X)$?
\end{problem}

The answer to the problem was positive in all the previously known
examples except simple elliptic singularities. Therefore, one might
be inclined to expect this question to have an affirmative answer
for certain classes of mild singularities such as F-regular singularities
and normal toric singularities. However, we show that for some normal
toric threefold, a non-essential divisor appears on $\FBtilde_{(\infty)}(X)$
(Example \ref{exa:non-min wt}). 
\begin{rem}
The same problem as Problem \ref{prob:only-ess} for higher Nash blowups
has negative answer already for normal toric surfaces (see computation
in \cite{duarte2013nashmodification}).
\end{rem}

We end this introduction by noting that there is a
relation among our work and non-commutative resolutions (see Remark
\ref{rem:noncommutative}). 

Throughout the paper, we work over an arbitrary field \(k\) unless otherwise noted. 

The paper is organized as follows. In Section \ref{sec:Preliminaries},
we set up notation and recall some notions and known results that
are necessary for later discussion. In Section \ref{sec:Level-subpolyhedra},
we introduce the notion of level subpolyhedra, analysis of which plays
a key role in the proofs of our main results. In Section \ref{sec:quot-space},
we establish some results concerning to passing to a quotient space
of a vector space. Such an argument is used, when we treat a ray contained
in the boundary of the cone in question. In Section \ref{sec:Critical-arrows},
we introduce the notion of critical arrows and show a condition for
a given ray in $\sigma$ being contained in a cone of $\Delta$ of
certain dimension. In Section \ref{sec:Main-results}, we state and
prove our main results. In Appendix, we discuss relations among several notions of essential divisors.

\section{Preliminaries\label{sec:Preliminaries}}

\subsection{Lattices and vector spaces}

Let $M$ and $N$ be free abelian groups of rank $d$ which are dual
to each other and let $M_{\RR}:=M\otimes\RR$ and $N_{\RR}:=N\otimes\RR$.
The natural pairing of $a\in M_{\RR}$ and $w\in N_{\RR}$ is denoted
by $(a,w)$. We sometimes denote it by $w(a)$, regarding $w$ as
a linear function on $M_{\RR}$. 

Fixing a basis of $M$, we sometimes identify $M$ with $\ZZ^{d}$.
The dual basis then induces an identification $N=\ZZ^{d}$. Through
these identifications, the vector spaces $M_{\RR}$ and $N_{\RR}$
are given with the standard inner products. In turn, the inner products
enable us to talk about norms of vectors in these vector spaces as
well as orthogonality among vectors in the same vector space.

For a subset $W\subset N_{\RR}$, we define its \emph{(outer) orthogonal
subspace} $W^{\perp}\subset M_{\RR}$ by
\[
W^{\perp}:=\{a\in M_{\RR}\mid\forall w\in W,\,(a,w)=0\}.
\]
We define its \emph{inner orthogonal subspace} $W^{(\perp)}\subset N_{\RR}$
by
\[
W^{(\perp)}:=\{v\in N_{\RR}\mid\forall w\in W,\,\langle v,w\rangle_{N}=0\},
\]
where $\langle-,-\rangle_{N}$ denotes the inner product on $N_{\RR}$
given as above. When $W$ is a singleton $\{w\}$, we write them as
$w^{\perp}$ and $w^{(\perp)}$. Similarly, we define the outer and
inner orthogonal subspaces of a subset of $M_{\RR}$.

\subsection{BGS essential divisors}\label{subse:BGS}

Let $\sigma\subset N_{\RR}$ be a strictly convex, nondegenerate (that
is, $d$-dimensional), rational polyhedral cone and let $\sigma^{\vee}\subset M_{\RR}$
be its dual cone. Following \cite{yasuda2012universal}, we denote
$\sigma^{\vee}$ also by $A_{\RR}$ and define the monoid $A:=A_{\RR}\cap M$.
The affine toric variety $X=X_{\sigma}$ associated to $\sigma$ is
given by 
\[
X=X_{\sigma}=\Spec k[A].
\]
By a \emph{toric divisor over $X$}, we mean a prime divisor $E$ on \(Z\) for some toric birational morphism \(Z \to X\) of normal toric varieties.  We identify two toric divisors over \(X\), \(E\subset Z\) and \(E'\subset Z'\), if \(E\) and \(E'\) map to each other by the natural birational map between \(Z\) and \(Z'\).
We say that a toric divisor \(E\) over \(X\) is \emph{exceptional} if the image of \(E\) in \(X\) has codimension \(>1\). 

In this paper, we mainly consider the following version of essential divisors due to Bouvier and Gonzalez-Sprinberg \cite{bouvier1995systeme}:

\begin{defn}
We say that a toric divisor over \(X\) is \emph{BGS essential} if it appears on every toric resolution of \(X\) as a prime divisor.     
\end{defn}

A \emph{ray }in $N_{\RR}$ means a one-dimensional cone of the form
$\RR_{\ge0}w$ for a nonzero element $w\in N$. A \emph{ray of $\sigma$
}means a one-dimensional face of $\sigma$, while a \emph{ray in $\sigma$}
is a ray contained in $\sigma$. A \emph{primitive element} of $N$
means a nonzero element which is not divisible by any integer $n\ge2$.
Each ray contains a unique primitive element. As is well-known, there
are one-to-one correspondences
\begin{align*}
\{\text{rays contained in \ensuremath{\sigma}}\} & \leftrightarrow\{\text{primitive elements in \ensuremath{\sigma}}\}\\
 & \leftrightarrow\{\text{toric divisors over \ensuremath{X}}\}.
\end{align*}
By these correspondences, rays of $\sigma$ correspond to toric prime
divisors on $X$.

Let $B:=\sigma\cap N$, the ``monoid on the $N$-side,'' which should
not be confused with $A=\sigma^{\vee}\cap M$ mentioned above. We define a partial order on  $B$ as follows; for
$w,w'\in B$, $w\le w'$ if and only if $w'\in w+\sigma$. As
is well-known, the monoid $B$ has a unique minimal set of generators,
which is called the \emph{Hilbert basis }of $B$\emph{. }It is easy
to see that the Hilbert basis consists of the minimal elements of
$B\setminus\{0\}$ with respect to the partial order. 
\begin{prop}[{\cite[Theorem 1.10]{bouvier1995systeme}}]
\label{prop:bouvier-GS}Let $\rho\subset\sigma$ be a ray and let
$E_{\rho}$ be the corresponding (not necessarily exceptional) divisor
over $X$. Then, $E_{\rho}$ is BGS essential if and only if $\rho$ is spanned by an element of the Hilbert basis
of $B$.
\end{prop}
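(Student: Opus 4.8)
The plan is to work entirely at the level of fans. A toric resolution of $X=X_\sigma$ corresponds to a nonsingular (``regular'') fan $\Sigma$ refining $\sigma$, i.e.\ with $|\Sigma|=\sigma$, and under the correspondence between toric divisors over $X$ and rays in $\sigma$ the divisor $E_\rho$ appears on $X_\Sigma$ as a prime divisor precisely when $\rho$ is a ray of $\Sigma$. Write $w$ for the primitive generator of $\rho$. The first step is to record the elementary reformulation of the hypothesis: since the Hilbert basis of $B$ is the set of minimal elements of $B\setminus\{0\}$ for the order $\le$, the vector $w$ fails to lie in the Hilbert basis if and only if it is \emph{reducible}, i.e.\ $w=w_1+w_2$ with $w_1,w_2\in B\setminus\{0\}$; in that case $w_1,w_2$ are automatically $\RR$-linearly independent, since otherwise primitivity of $w$ and strict convexity of $\sigma$ are contradicted. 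So the claim becomes: $E_\rho$ is BGS essential $\iff$ $w$ is not reducible, and I would prove the two implications separately.

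For ``$w$ not reducible $\Rightarrow$ essential'' I would argue by contradiction. Let $\Sigma$ be a nonsingular refinement of $\sigma$ that does not have $\rho$ as a ray. Then $w$ lies in the relative interior of a unique cone $\tau\in\Sigma$, necessarily with $\dim\tau\ge 2$. As $\Sigma$ is nonsingular, $\tau=\langle v_1,\dots,v_k\rangle$ with $v_1,\dots,v_k$ part of a $\ZZ$-basis of $N$; writing $w$ in that basis gives $w=\sum_{i=1}^{k}a_iv_i$ with each $a_i\in\ZZ_{>0}$ (positivity because $w$ is interior to $\tau$, integrality because the $v_i$ extend to a basis). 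Then $v_1\in B\setminus\{0\}$ and $w-v_1=(a_1-1)v_1+\sum_{i\ge 2}a_iv_i\in\tau\cap N\subseteq B$ is nonzero since $k\ge 2$, so $w=v_1+(w-v_1)$ is reducible — contradiction. (The sub-case where $\rho$ is a ray of $\sigma$ itself is immediate, since any refinement of $\sigma$ contains every ray of $\sigma$; this is consistent, as primitive generators of rays of $\sigma$ are never reducible.)

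For ``$w$ reducible $\Rightarrow$ not essential'' I would exhibit one nonsingular refinement of $\sigma$ avoiding $\rho$. Fix $w=w_1+w_2$ as above and set $\tau_0:=\langle w_1,w_2\rangle$, a two-dimensional subcone of $\sigma$ with $w$ in its relative interior. Apply the classical Hirzebruch--Jung resolution of $\tau_0$: it subdivides $\tau_0$ into nonsingular two-dimensional cones whose rays run through the Hilbert basis of $\tau_0\cap N$. Since $w=w_1+w_2$ is reducible in $\tau_0\cap N$, it is not in that Hilbert basis, so $w$ lies in the relative interior of one of these nonsingular two-dimensional cones, say $\tau_0'$. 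Now I would propagate this to all of $\sigma$: first choose a refinement $\Sigma_1$ of $\sigma$ in which $\tau_0'$ (together with its faces) is a subcomplex, and then pass to a further refinement $\Sigma$ of $\Sigma_1$ that is an isomorphism over the nonsingular locus of $X_{\Sigma_1}$, so that the nonsingular cones of $\Sigma_1$ — in particular $\tau_0'$ — are left unchanged. Then $\Sigma$ is a nonsingular refinement of $\sigma$ with $\tau_0'\in\Sigma$, hence $w$ lies in the relative interior of the two-dimensional cone $\tau_0'$ and $\rho$ is not a ray of $\Sigma$; therefore $E_\rho$ does not appear on $X_\Sigma\to X$, and $E_\rho$ is not BGS essential.

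The substantive work — what I expect to be the main obstacle — lies entirely in the last step: making rigorous that one can refine $\sigma$ so that a prescribed interior subcone becomes a subcomplex, and then resolve the remaining (necessarily singular) cones without disturbing it. Both are standard in toric geometry — the second is the statement that every toric variety admits a resolution which is an isomorphism over its smooth locus, combined with the identification of the smooth locus with the union of orbits of nonsingular cones — but they must be cited or spelled out with care, whereas everything else in the argument is elementary. I would also treat the degenerate configurations separately: if $d=2$ the construction in the third paragraph is immediate with $\Sigma=$ the Hirzebruch--Jung resolution itself, and if $w$ lies in the relative interior of a proper face $F$ of $\sigma$ then $w_1,w_2\in F$, so $\tau_0\subseteq F$ and the construction takes place inside $F$ before being extended to $\sigma$. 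Finally I would note that $w_1,w_2$ and the Hirzebruch--Jung decomposition are chosen once and for all, so no compatibility issue between different choices arises.
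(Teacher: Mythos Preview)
The paper does not supply its own proof of this proposition; it is simply quoted from \cite[Theorem~1.10]{bouvier1995systeme} and used as input. Your argument is a correct self-contained proof.

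The direction ``irreducible $\Rightarrow$ BGS essential'' is exactly the standard one-line computation. For the converse, the two extension facts you flag are, as you say, standard, and can be made explicit as follows. First, enlarge the smooth $2$-cone $\tau_0'=\langle v_1,v_2\rangle$ to a full-dimensional simplicial cone $\tau'=\langle v_1,v_2,u_3,\dots,u_d\rangle\subset\sigma$ by adjoining any $d-2$ vectors $u_3,\dots,u_d\in\sigma$ linearly independent from $v_1,v_2$; then $\tau_0'$ is automatically a face of $\tau'$. Subdividing $\sigma$ by the $d$ facet hyperplanes of $\tau'$ yields a fan $\Sigma_1$ with $|\Sigma_1|=\sigma$ one of whose cells is $\sigma\cap\tau'=\tau'$, hence also $\tau_0'\in\Sigma_1$. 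Second, resolve $\Sigma_1$ by the usual sequence of star subdivisions centred at lattice points in the relative interiors of singular cones; such a centre can never lie in $\tau_0'$ (its carrier cone would then be a face of the smooth cone $\tau_0'$, hence smooth), so $\tau_0'$ survives untouched in the resulting smooth fan $\Sigma$. One small point worth stating: the Hirzebruch--Jung output $\tau_0'$ is a priori smooth only with respect to the rank-two lattice $N\cap\langle\tau_0\rangle_\RR$, but this sublattice is saturated in $N$, so a $\ZZ$-basis of it extends to one of $N$ and $\tau_0'$ is smooth in $N$ as well.
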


\subsection{F-blowups}

For each positive integer $l$, we put 
\begin{align*}
(1/l)\cdot M & :=\{m/l\in M_{\RR}\mid m\in M\}\text{ and}\\
(1/l)A & :=A_{\RR}\cap(1/l)\cdot M.
\end{align*}
We define 
\[
X_{(l)}:=\Spec k[(1/l)\cdot A],
\]
which is an affine toric variety isomorphic to $X$. The inclusion
$A\hookrightarrow(1/l)\cdot A$ of monoids define a toric morphism
\[
F^{(l)}\colon X_{(l)}\to X.
\]
If $k$ has characteristic $p>0$ and if $l=p^{e}$ for a positive
integer $e$, then $F^{(l)}$ is nothing but the $e$-iterate Frobenius
morphism of $X$. 
\begin{defn}
We define the\emph{ $(l)$-th F-blowup} of $X$, denoted by $\FB_{(l)}(X)$,
to be the universal birational flattening of $F^{(l)}$. Namely, it
is a variety $Y$ given with a proper birational morphism $f\colon Y\to X$
such that 
\begin{enumerate}
\item $(Y\times_{X}X_{(l)})_{\red}$ is flat over $Y$, and
\item if $f'\colon Y'\to X$ is another proper birational morphism satisfying
condition (1), then there exists a unique morphism $g\colon Y'\to Y$
with $f'=f\circ g$. 
\end{enumerate}
\end{defn}

In other words, $\FB_{(l)}(X)$ is the blowup at the coherent sheaf
$(F_{(l)})_{*}\cO_{X_{(l)}}$ (see \cite{oneto1991remarks,villamayoru.2006onflattening}
for more details on blowups at coherent sheaves).
\begin{rem}
\label{rem:FB in Hilb} For simplicity, suppose that \(k\) is algebraically closed. Then, there is another way to define F-blowups by
using Hilbert schemes. For each point $x\in X$, the scheme-theoretic
preimage $(F^{(l)})^{-1}(x)$ is a zero-dimensional closed subscheme
of $X_{(l)}$, and hence determines a point of the Hilbert scheme
$\Hilb(X)$ of zero-dimensional subschemes of $X$. The $(l)$-th
F-blowup $\FB_{(l)}(X)$ is defined to be the closure of $\{[(F^{(l)})^{-1}(x)]\mid x\in X_{\sm}(k)\}$
in $\Hilb(X_{(l)})$. If $k$ has characteristic $p>0$ and $l=p^{e}$,
then subschemes $Z\subset X_{(l)}$ corresponding to points of $\FB_{(l)}(X)=\FB_{e}(X)$
are fat points or jets in the sense that $Z_{\red}$ is a point. 
\end{rem}

\begin{rem}
\label{rem:G-Hilb}Suppose that \(k\) is algebraically closed. Suppose also that $G$ is a finite abelian group of
order coprime to the characteristic of $k$ and that it linearly acts
on an affine space $\AA_{k}^{d}$. The quotient variety $X=\AA_{k}^{d}/G$
is an affine toric variety associated to a simplicial cone $\sigma\subset N_{\RR}$.
There exists a natural proper birational morphism $\Hilb^{G}(\AA_{k}^{d})\to X$
from the $G$-Hilbert scheme \cite{ito1996mckaycorrespondence}. We
have an isomorphism $\Hilb^{G}(\AA_{k}^{d})\cong\FB_{(\infty)}(X)$
compatible with the birational morphisms to $X$ \cite{yasuda2012universal,toda2009noncommutative}.
There have been constructed an example of non-normal $G$-Hilbert
schemes \cite{craw2007moduliof} and an example of normal but singular
$G$-Hilbert schemes \cite{kkedzierski2011theghilbert}. These provide
an example of normal affine toric varieties $X$ such that $\FB_{(\infty)}(X)$
is non-normal and an example such that $\FB_{(\infty)}(X)$ is normal
(hence isomorphic to $\FBtilde_{(\infty)}(X)$) but singular. 
\end{rem}

If $T\subset X$ denotes the open torus, then $\FB_{(l)}(X)$ has
a natural $T$-action. There is a natural $T$-equivariant morphism
$\FB_{(l)}(X)\to X$ which is proper and birational. Thus, the normalization
$\FBtilde_{(l)}(X)$ of $\FB_{(l)}(X)$ is a normal toric variety
and obtained by a subdivision of the cone $\sigma$. From \cite[Th.\ 3.13]{yasuda2012universal},
the sequence of F-blowups,

\[
X=\FB_{(1)}(X),\FB_{(2)}(X),\FB_{(3)}(X),\dots.,
\]
stabilizes.
\begin{defn}
We define $\FB_{(\infty)}(X)$ to be $\FB_{(l)}(X)$ for sufficiently
large $l$ and call it the \emph{limit F-blowup of $X$}. We define
the \emph{normalized limit F-blowup }$\FBtilde_{(\infty)}(X)$ to
be its normalization. We denote by $\Delta$ the fan corresponding
to the toric proper birational morphism $\FBtilde_{(\infty)}(X)\to X$.
\end{defn}

The fan $\Delta$ has support $|\Delta|=\sigma$.
We have the following description of \(\Delta\) as a Gröbner fan. 

\begin{prop}[{$\Delta$ as a Gröbner fan \cite[Prop.\ 3.5]{yasuda2012universal}}]
A normalized F-blowup $\FBtilde_{(l)}(X)\to X$ corresponds to the
Gröbner fan of the ideal 
\[
\fa_{l}:=\langle x^{m}-1\mid m\in M\rangle_{k\left[(1/l)\cdot M\right]}\cap k\left[(1/l)\cdot A\right]\subset k\left[(1/l)\cdot A\right].
\]
In particular, the fan $\Delta$ is the Gröbner fan of $\fa_{l}$
for $l\gg0$.
\end{prop}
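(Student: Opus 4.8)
The plan is to read off the fan of $\FBtilde_{(l)}(X)$ from the Hilbert-scheme picture of F-blowups (Remark~\ref{rem:FB in Hilb}) and to identify it with the Gröbner fan by using the classical realization of Gröbner degenerations as one-parameter limits inside a Hilbert scheme. Both $\Delta$ and the Gröbner fan of $\fa_{l}$ are combinatorial invariants insensitive to extension of the base field (since $\fa_{l}$ is generated by the binomials $x^{m}-1$, it is defined over the prime field), so we may assume $k$ algebraically closed.

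The first step pins down the point $z_{0}:=[(F^{(l)})^{-1}(1_{T})]\in\Hilb(X_{(l)})$. The scheme-theoretic fiber of the Frobenius-like morphism over the identity is supported inside the open torus of $X_{(l)}=\Spec k[(1/l)\cdot A]$, on which $F^{(l)}$ restricts to the isogeny of tori with kernel the finite subgroup scheme $\mu$ dual to $(1/l)\cdot M/M$; since this restriction is flat, $(F^{(l)})^{-1}(1_{T})=\mu$ as a subscheme of $X_{(l)}$, and so its ideal is the contraction to $k[(1/l)\cdot A]$ of the defining ideal $\langle x^{m}-1\mid m\in M\rangle$ of $\mu\subset\Spec k[(1/l)\cdot M]$ --- that is, exactly $\fa_{l}$. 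Hence $z_{0}=[\Spec k[(1/l)\cdot A]/\fa_{l}]$. Moreover $x\mapsto[(F^{(l)})^{-1}(x)]$ is equivariant, the torus $T$ is dense in $X_{\sm}$, and the torus of $X_{(l)}$ acts on $\FB_{(l)}(X)$ through its quotient by $\mu$ (which fixes $z_{0}$); hence $\FB_{(l)}(X)$ is the closure of the single free orbit of $z_{0}$ under (this copy of) $T$, and $\FB_{(l)}(X)\to X$ is the resulting equivariant proper birational morphism, i.e.\ the ``support'' map.

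The second step computes the limits. For $w\in\sigma\cap N$ the one-parameter subgroup $\lambda_{w}$ (acting on $\FB_{(l)}(X)$ through the torus action, equivalently acting on a monomial $x^{m}$ by the weight $(m,w)$) degenerates $z_{0}$ to the point $[\Spec k[(1/l)\cdot A]/\mathrm{in}_{w}(\fa_{l})]$, where $\mathrm{in}_{w}(\fa_{l})$ is the initial ideal of $\fa_{l}$ for the weight $w$; this is the standard Hilbert-scheme incarnation of a Gröbner degeneration, applied to the ideal $\fa_{l}$ (which is homogeneous for the finite grading by $(1/l)\cdot M/M$). When $w\notin\sigma=(\sigma^{\vee})^{\vee}$, the degeneration fails to converge, the subschemes running off to the toric boundary of the affine variety $X_{(l)}$. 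By definition the Gröbner fan of $\fa_{l}$ is a fan with support $\sigma$ (its cone $\{0\}$ recording $\mathrm{in}_{0}(\fa_{l})=\fa_{l}$) such that $\mathrm{in}_{w}(\fa_{l})$ is constant exactly along the relative interior of each cone, distinct cones giving distinct initial ideals and hence distinct points of $\Hilb(X_{(l)})$.

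The third step invokes the standard description of a torus-orbit closure: since $T$ acts on $\overline{T\cdot z_{0}}$ with dense free orbit, its normalization is the normal toric variety whose fan is exactly the collection of rational cones $\tau\subseteq N_{\RR}$ along whose relative interior $\lim_{s\to0}\lambda_{w}(s)\cdot z_{0}$ exists and is constant (with the evident face relations), and the induced map to $X$ is the toric morphism given by $\id_{N}$. By the second step this fan is precisely the Gröbner fan of $\fa_{l}$, so $\FBtilde_{(l)}(X)\to X$ corresponds to it; since $(\FBtilde_{(l)}(X))_{l}$ stabilizes and $\Delta$ is its stable value, $\Delta$ is the Gröbner fan of $\fa_{l}$ for $l\gg0$. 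The main obstacle is making this third step rigorous: one must show that the constancy loci of $\lim_{s\to0}\lambda_{w}(s)\cdot z_{0}$ genuinely form a fan with support $\sigma$ and the correct face relations, and that it equals the Gröbner fan on the nose --- ``not coarser'' rests on the injectivity ``distinct initial ideals $\Rightarrow$ distinct limit points'' noted above, and ``not finer'' on the fact that the limit is literally locally constant where $\mathrm{in}_{w}(\fa_{l})$ is. The identification of the fiber ideal with $\fa_{l}$ and the matching of conventions for the Gröbner fan of an ideal in $k[(1/l)\cdot A]$ (support $\sigma$, the origin being the generic cone) are secondary, routine points.
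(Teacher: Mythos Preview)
The paper does not supply its own proof of this proposition; it is quoted from \cite[Prop.~3.5]{yasuda2012universal} and stated without argument, so there is no in-paper proof to compare your attempt against.

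Your outline reconstructs the natural proof---identify $\fa_l$ as the ideal of the fibre $(F^{(l)})^{-1}(1_T)$, realise $\FB_{(l)}(X)$ as the torus-orbit closure of $z_0=[\fa_l]$ in $\Hilb(X_{(l)})$, and read off its fan from one-parameter-subgroup limits, which are Gr\"obner degenerations $\fa_l\leadsto\mathrm{in}_w(\fa_l)$---and this is indeed the line taken in the cited reference. Your first two steps are correct. One small point: the cocharacter lattice of the torus of $X_{(l)}$, embedded in $N_\RR$ via the given pairing, is $lN$ rather than $N$, so $\lambda_w$ for general $w\in N$ does not act on monomials $x^m$, $m\in(1/l)M$, with integral weights; this is harmless because $\lambda_{lw}(s)=\lambda_w(s^l)$ and $\mathrm{in}_{lw}(\fa_l)=\mathrm{in}_w(\fa_l)$, but it is worth saying explicitly. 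You correctly flag the third step---that the limit stratification of the orbit closure is precisely the Gr\"obner fan, neither coarser nor finer---as the place requiring care; the ingredients you name (distinct initial ideals give distinct Hilbert-scheme points, and the limit depends only on $\mathrm{in}_w$) are the right ones, but a precise reference or a few more lines spelling out the orbit-closure fan description would complete the argument.
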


Because of this proposition, we often regard a point of the cone \(\sigma\) as a weight vector defining a partial order on the monoid \((1/l)A\), whose elements correspond to monomials in the ring  
\[
    k\left[(1/l)\cdot A\right] \subset k\left[(1/l)\cdot M\right]=k\left[x_1^{\pm 1/l}, \dots, x_d^{\pm 1/l}\right] ,
\]
as well as a partial order on \(A_\RR = \sigma \). For some arguments, we need to assume that a weight vector is in a general position, avoiding finitely many hyperplanes. We now make this condition more precise. Let us recall that we have fixed an identification $M=\ZZ^{d}$ so that norms
of vectors in $M_{\RR}$ are defined. 

\begin{defn}
Let $a_{1},\dots,a_{m}$ be the Hilbert basis of $A$. We define $\Theta:=\bigcup_{i}(A_{\RR}+a_{i})\subset A_{\RR}$
and define $D\in\RR_{>0}$ to be the diameter of $A_{\RR}\setminus\Theta$.
Namely, 
\[
D:=\sup\{|x-y|\mid x,y\in A_{\RR}\setminus\Theta\}.
\]
We define a finite set $M_{\le D}:=\{m\in M\mid|m|\le D\}$. 
\end{defn}

\begin{defn}
We define $\Delta^{*}$ to be the fan obtained by subdividing $\sigma$
by hyperplanes $m^{\perp}\subset\RR^{d}$, $m\in M_{\le D}$. We say
that a vector $w\in N_{\RR}$ is \emph{general }if for every $a\in M_{\le D}$,
$w(a)\ne0$. 
\end{defn}

\begin{lem}
$\Delta^{*}$ is a refinement of $\Delta$.
\end{lem}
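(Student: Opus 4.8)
We need to prove that $\Delta^*$ is a refinement of $\Delta$, i.e., that the fan $\Delta^*$ (obtained by cutting $\sigma$ with the hyperplanes $m^\perp$ for $m \in M_{\le D}$) subdivides $\Delta$. Since both are fans supported on $\sigma$, it suffices to show every cone of $\Delta^*$ is contained in some cone of $\Delta$; equivalently, any two weight vectors $w, w'$ lying in the relative interior of a common cone of $\Delta^*$ also lie in a common cone of $\Delta$. By the Gröbner-fan description (Proposition on $\Delta$ as a Gröbner fan), cones of $\Delta$ are the closures of equivalence classes of weight vectors inducing the same initial ideal of $\mathfrak{a}_l$ for $l \gg 0$.

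**The plan.** The idea is that the Gröbner fan of $\mathfrak{a}_l$ is determined by finitely many "marked" comparisons among monomials, and the diameter bound $D$ is exactly the quantity controlling which monomial differences can matter. First I would recall that the ideal $\mathfrak{a}_l$ is generated by binomials $x^{m/l} - x^{m'/l}$ (with $m/l, m'/l \in (1/l)A$ and $m - m' \in M$... more precisely by the binomials coming from $x^m - 1$ intersected down); and that for a binomial ideal, the initial ideal with respect to a weight $w$ is again generated by binomials/monomials obtained by comparing the two terms of each binomial in a reduced generating set. The key structural fact, which should come from the analysis in \cite{yasuda2012universal} together with the definition of $D$, is that a reduced Gröbner basis of $\mathfrak{a}_l$ (for $l \gg 0$, after rescaling) involves only binomials whose exponent vectors are controlled by $\Theta$ and whose term differences lie in $M_{\le D}$. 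Concretely, I expect that membership of $w$ in a given cone of $\Delta$ is detected entirely by the signs of the pairings $w(m)$ for $m$ ranging over the finite set $M_{\le D}$ (or a subset thereof): two weights $w, w'$ with the same sign pattern on all of $M_{\le D}$ induce the same initial ideal.

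**Carrying it out.** Granting that claim, the argument concludes quickly: if $w$ and $w'$ lie in the relative interior of the same cone $\tau$ of $\Delta^*$, then for every $m \in M_{\le D}$ the hyperplane $m^\perp$ either contains all of $\tau$ or meets $\tau$ only in its boundary; in the first case $w(m) = w'(m) = 0$, and in the second case $w(m)$ and $w'(m)$ are both nonzero of the same sign (since $\tau^\circ$ lies entirely on one side of $m^\perp$). Hence $w$ and $w'$ have the same sign pattern on $M_{\le D}$, so by the claim they induce the same initial ideal of $\mathfrak{a}_l$ for $l \gg 0$, hence lie in a common cone of $\Delta$. Therefore $\tau$ is contained in a cone of $\Delta$, and since $|\Delta^*| = |\Delta| = \sigma$ and $\Delta^*$ is a genuine fan, $\Delta^*$ refines $\Delta$.

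**Main obstacle.** The real content is the claim that the Gröbner fan $\Delta$ is detected by sign patterns on the finite set $M_{\le D}$ — i.e., that $D$ as defined via the diameter of $A_\RR \setminus \Theta$ genuinely bounds the exponent differences appearing in the relevant Gröbner basis comparisons. This requires unwinding how the stabilized ideal $\mathfrak{a}_l$ behaves: one must show that the binomials $x^{p} - x^{q}$ needed to determine $\operatorname{in}_w(\mathfrak{a}_l)$ can be taken with $p - q \in M$ of norm $\le D$, which is presumably where the set $\Theta = \bigcup_i (A_\RR + a_i)$ and the Hilbert basis $a_1, \dots, a_m$ of $A$ enter: any lattice point of $(1/l)A$ outside $\Theta$'s scaled analog is a "corner" region, and differences of monomials reducing into this region are bounded by $D$. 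I would isolate this as the technical heart, likely invoking the explicit description of $\FBtilde_{(l)}$ and its stabilization from \cite{yasuda2012universal}; the rest is the soft fan-theoretic packaging above.
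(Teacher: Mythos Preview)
Your approach is correct in outline but takes a different route from the paper. The paper's proof is a one-line citation: in the proof of \cite[Th.~3.14]{yasuda2012universal} one constructs a toric proper birational morphism $Y\to X$ that factors through every $\FB_{(l)}(X)$, and the fan of $Y$ is exactly $\Delta^{*}$; the factorization through $\FBtilde_{(\infty)}(X)$ then gives the refinement immediately. Your argument instead works directly through the Gr\"obner-fan description, showing that two general weights in the same chamber of $\Delta^{*}$ share the same sign pattern on $M_{\le D}$ and hence the same initial ideal. Both routes ultimately rest on the same external reference; yours unpacks more of it by hand.

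The ``main obstacle'' you single out---that the initial ideal of $\mathfrak{a}_{l}$ is determined by the signs of $w(m)$ for $m\in M_{\le D}$---is not an obstacle at all: it is exactly the content the paper records immediately after this lemma (the description of $\Xi_{w}$ in terms of $M_{\le D}^{w+}$, citing \cite[Lem.~3.10]{yasuda2012universal}, together with the identification of the initial ideal with $k[\Xi_{w}\cap(1/l)M]$). So your key claim is already available by citation, and once you invoke it the rest of your argument goes through. One small cleanup: your case analysis includes the possibility $w(m)=0$, but those results are stated only for \emph{general} $w$. This is harmless, since it suffices to show that each \emph{maximal} cone of $\Delta^{*}$ lies in a cone of $\Delta$, and interiors of maximal cones of $\Delta^{*}$ consist entirely of general weights; you can simply drop the degenerate case.
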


\begin{proof}
In the proof of \cite[Th.\ 3.14]{yasuda2012universal}, we construct
a toric proper birational morphism $Y\to X$ such that for every $l$,
it factors as $Y\to\FB_{(l)}(X)\to X$. The construction there shows
that the proper birational morphism $Y\to X$ corresponds to $\Delta^{*}$.
This shows the lemma.
\end{proof}
Each element $w\in N_{\RR}$ induces a partial order $\ge_{w}$ on
$M_{\RR}$; for $a,b\in M_{\RR}$, $a\ge_{w}b$ if and only if $w(a)\ge w(b)$. 
\begin{lem}
\label{lem:Xi equiv}Let $w\in\sigma$ be a general element. Let $>$
be a total order on $\ZZ^{d}$ refining the partial order $\ge_{w}$.
Let $M_{\le D}^{w+}:=\{m\in M_{\le D}\mid w(m)>0\}$. The following
subsets of $A_{\RR}$ are identical:
\begin{enumerate}
\item $\Theta\cup\{a\in A_{\RR}\mid\exists b\in A_{\RR},\,a-b\in M_{\le D}^{w+}\}$
\item $\{a\in A_{\RR}\mid\exists b\in A_{\RR},a-b\in M\text{ and }w(a)>w(b)\}$
\item $\{a\in A_{\RR}\mid\exists b\in A_{\RR},a-b\in M\text{ and }a>b\}$
\end{enumerate}
\end{lem}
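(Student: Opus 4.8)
The strategy is to prove the chain of equalities (1) $=$ (2) $=$ (3) by mutual inclusion, using the definition of $D$ as the diameter of $A_{\RR}\setminus\Theta$ as the only non-formal ingredient. The containments (1) $\subseteq$ (2) and (2) $\supseteq$ (3) $\cap$ (something) should be essentially immediate from unwinding definitions; the real work is showing that an element $a$ witnessed by an arbitrarily large difference $a - b \in M$ with $w(a) > w(b)$ can also be witnessed by a difference of bounded size (lying in $M_{\le D}^{w+}$), once we allow ourselves to land in $\Theta$.

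First I would dispatch the easy inclusions. For (1) $\subseteq$ (2): if $a \in \Theta$, say $a = b + a_i$ with $b \in A_{\RR}$ and $a_i$ a Hilbert basis element, then $a - b = a_i \in M$ and $w(a) - w(b) = w(a_i) > 0$ because $a_i$ lies in the interior-facing part of $\sigma^\vee$ and $w \in \sigma$ is general (so $w(a_i) \ne 0$, and $w(a_i) \ge 0$ since $a_i \in \sigma^\vee$, hence $w(a_i) > 0$); thus $a$ is in set (2). If instead $a - b \in M_{\le D}^{w+}$ for some $b \in A_{\RR}$, then directly $a - b \in M$ and $w(a) > w(b)$, so again $a$ is in (2). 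For (2) $\subseteq$ (3): given $a - b \in M$ with $w(a) > w(b)$, since $>$ refines $\ge_w$ and $w(a) > w(b)$ gives the strict inequality $a >_w b$, hence $a > b$; so $a$ is in (3). For (3) $\subseteq$ (2): given $a - b \in M$ with $a > b$, we have $w(a) \ge w(b)$ since $>$ refines $\ge_w$; if $w(a) = w(b)$ we need to rule this out — but $w$ being general and $a - b \in M_{\le D}$ would force $w(a-b)\ne 0$; the subtlety is that $a - b$ need not be in $M_{\le D}$. So (3) $\subseteq$ (2) is not quite free and should be folded into the main argument below, or handled by first reducing to a bounded representative.

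The heart of the matter is (2) $\subseteq$ (1). Take $a \in A_{\RR}$ with $a - b \in M$, $w(a) > w(b)$, $b \in A_{\RR}$; I want to show $a \in \Theta$ or $a - b' \in M_{\le D}^{w+}$ for some $b' \in A_{\RR}$. Suppose $a \notin \Theta$. The idea is to choose, among all $b' \in A_{\RR}$ with $a - b' \in M$ and $w(b') < w(a)$, one that is "closest" to $a$ — more precisely, I would like to show that the set $S := \{ a - b' \mid b' \in A_{\RR},\, a - b' \in M,\, w(b') < w(a)\}$ is nonempty (it contains $a - b$) and that it must contain an element of norm $\le D$. The mechanism: if $m \in S$ has $|m| > D$, then writing $m = a - b'$, consider whether $b' \in A_{\RR}\setminus\Theta$. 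If $b' \in \Theta$, then $b' = b'' + a_i$ with $b'' \in A_{\RR}$, and then $a - b'' = m + a_i \in S$ too (since $w(b'') = w(b') - w(a_i) < w(b') < w(a)$) — but this makes $m$ larger, not smaller, so instead I would use: $a \notin \Theta$ and $b' \in A_{\RR}$ with $a - b' \in M_{\ge 0}$... hmm. The cleaner route: since $a \notin \Theta$ and $a \in A_{\RR}$, while $b' \in A_{\RR}$; if also $b'\notin\Theta$, then both $a, b' \in A_{\RR}\setminus\Theta$, so $|a - b'| \le D$ by definition of $D$, giving $a - b' \in M_{\le D}$, and since $w(a) > w(b')$ this is in $M_{\le D}^{w+}$, so $a$ is in set (1) via this $b'$. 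If $b' \in \Theta$, then $a \in b' + \sigma^\vee \subseteq \Theta + \sigma^\vee = \Theta$ (as $\Theta$ is a union of translates of $A_\RR$ and hence $\sigma^\vee$-stable), contradicting $a \notin \Theta$ — wait, we need $a - b' \in \sigma^\vee$, i.e. $a \ge b'$ in the cone order, which is not given.

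So the genuine obstacle, which I would flag explicitly, is that $b \le a$ (cone order) is \emph{not} assumed — only $w(b) < w(a)$. To handle this I would first replace $b$ by a better witness: among all $b'\in A_\RR$ with $a-b'\in M$, $w(b')<w(a)$, pick $b'$ with $w(b')$ maximal subject to $w(b') < w(a)$ if such a maximum exists (it need not, a priori, since $w(A_\RR \cap (a+M))$ could be dense below $w(a)$) — alternatively, and more robustly, pick $b'$ minimizing $|a - b'|$ over this set, which does have a minimum since $a + M$ is discrete and the constraint region is closed-ish after intersecting with a ball. With this minimal-norm $b'$: if $|a-b'| \le D$ we are done as above. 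If $|a - b'| > D$, then $b' \in \Theta$ would give (translating $b'$ toward $a$ by subtracting a Hilbert element $a_i$, legal since $b' - a_i$ might leave $A_\RR$...) — the argument needs: from $b' \in \Theta$, write $b' = b'' + a_i$, $b''\in A_\RR$; then $a - b'' \in M$ and $w(b'') < w(b') < w(a)$, but $|a - b''| = |a - b' - a_i|$, which by triangle inequality could be smaller than $|a-b'|$ only if $a_i$ points "the right way". This is where I expect to need the precise geometry — possibly replacing the norm-minimization by minimization of $w(a) - w(b')$, or using a lexicographic combination. I would structure the final writeup around a single well-chosen extremal $b'$ and a short case analysis $b' \in \Theta$ vs. $b' \notin \Theta$, the latter closing immediately via the diameter bound, the former being reduced away by the extremality. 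The main obstacle is thus purely the bookkeeping to force the witness into $A_\RR \setminus \Theta$ (or into $\Theta$ outright), and I expect the correct extremal quantity to be the one that makes the $\Theta$-case contradict extremality in one line.
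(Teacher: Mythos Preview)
The paper's own proof is much shorter than what you attempt: it records the easy chain $(1)\subseteq(2)\subseteq(3)$ and then invokes \cite[Lem.~3.10]{yasuda2012universal} for the equality $(1)=(3)$, closing the loop. The hard direction is thus imported wholesale from the earlier paper, and no extremal argument is carried out in the present one.

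Your direct attack on $(2)\subseteq(1)$ has a genuine gap: you never settle on the extremal quantity, and the one you lean toward---minimizing $|a-b'|$---does not work, for exactly the reason you notice (subtracting $a_i$ from $b'$ can make $|a-b'|$ larger). The fix is to drop the minimization altogether and simply \emph{iterate} the move you already wrote down: whenever the current witness $b'$ lies in $\Theta$, replace it by $b'-a_i\in A_{\RR}$. Each step strictly decreases $w(b')$ by at least $\min_i w(a_i)>0$; here one checks that each $a_i$ lies in the closure of $A_{\RR}\setminus\Theta$, hence $|a_i|\le D$, so generality of $w$ forces $w(a_i)\ne 0$, while $a_i\in\sigma^{\vee}$ gives $w(a_i)\ge 0$. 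Since $w\ge 0$ on $A_{\RR}$, the process terminates at some $b_n\in A_{\RR}\setminus\Theta$ with $w(b_n)<w(a)$. Now $a,b_n\in A_{\RR}\setminus\Theta$ gives $|a-b_n|\le D$ by the very definition of $D$, whence $a-b_n\in M_{\le D}^{w+}$ and $a$ lies in set~(1). The same peeling closes $(3)\subseteq(1)$ as well, the only extra point being to exclude $w(a)=w(b_n)$, which follows from generality since $0\ne a-b_n\in M_{\le D}$. In short, the correct monovariant is $w(b')$, not $|a-b'|$.
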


\begin{proof}
The first set is contained in the second one, and the second one is
contained in the third one. From \cite[Lem. \ 3.10]{yasuda2012universal},
the first one and the third one are identical. This shows the lemma. 
\end{proof}
\begin{defn}
\label{def:Xi}For a general $w\in\sigma$, we denote by $\Xi_{w}$
the subset of $A_{\RR}$ in Lemma \ref{lem:Xi equiv}. 
\end{defn}

\begin{prop}[{\cite[Lem.\ 3.11]{yasuda2012universal}}]
\label{prop:Initial-Xi}With the notation of Lemma \ref{lem:Xi equiv},
the initial ideal of $\fa_{l}$ with respect to $>$ is the monomial
ideal corresponding to $\Xi_{w}\cap(1/l)\cdot M$;
\[
\In_{>}(\fa_{l})=k\left[\Xi_{w}\cap(1/l)\cdot M\right].
\]
In particular, \(\In_{>}(\fa_{l})\) is independent of the total order \(>\) and depends only on \(l\) and \(w\). 
\end{prop}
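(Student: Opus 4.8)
The plan is to derive the displayed identity from \cite[Lem.\ 3.11]{yasuda2012universal} together with Lemma \ref{lem:Xi equiv}, and then to read off the ``in particular'' clause from the order-free description of $\Xi_{w}$ recorded in that lemma.

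For the displayed identity, the key observation is that $\fa_{l}$ is a lattice ideal: it is the contraction to $k[(1/l)\cdot A]$ of the binomial ideal $\langle x^{m}-1\mid m\in M\rangle\subset k[(1/l)\cdot M]$ attached to the sublattice $M\subset(1/l)\cdot M$. Hence $\fa_{l}$ is generated by binomials $x^{a}-x^{b}$ with $a,b\in(1/l)\cdot A$ and $a-b\in M$, and --- lattice ideals having binomial Gröbner bases --- its initial ideal $\In_{>}(\fa_{l})$ is the monomial ideal $k[S]$, where $S\subset(1/l)\cdot A$ is the set of $>$-leading exponents of the binomials contained in $\fa_{l}$ (equivalently, of all nonzero elements of $\fa_{l}$). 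The inclusion of item (3) of Lemma \ref{lem:Xi equiv}, intersected with $(1/l)\cdot M$, into $S$ is immediate: whenever $a,b\in(1/l)\cdot A$ satisfy $a-b\in M$ and $a>b$, the binomial $x^{a}-x^{b}=x^{b}(x^{a-b}-1)$ lies in $\fa_{l}$ and has $>$-leading term $x^{a}$. The reverse inclusion is \cite[Lem.\ 3.11]{yasuda2012universal}, proved by reducing an arbitrary binomial of $\fa_{l}$ modulo a binomial Gröbner basis; this argument is purely combinatorial in $(1/l)\cdot A$ and $M$ and so remains valid for every $l$ and over an arbitrary field. By Lemma \ref{lem:Xi equiv}, $S$ then coincides with $\Xi_{w}\cap(1/l)\cdot M$, which is the asserted formula.

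For the ``in particular'' clause I would simply invoke the equality of the three sets in Lemma \ref{lem:Xi equiv}: by item (1) of that lemma,
\[
\Xi_{w}=\Theta\cup\{a\in A_{\RR}\mid\exists\,b\in A_{\RR},\ a-b\in M_{\le D}^{w+}\},
\]
and this expression involves only the fixed data $A_{\RR},\Theta,D$ and the vector $w$ itself, through the finite set $M_{\le D}^{w+}=\{m\in M_{\le D}\mid w(m)>0\}$; no total order appears in it, and the only property of $w$ that has been used is that it be general, so that $w(m)\neq 0$ for all $m\in M_{\le D}$. Consequently $\In_{>}(\fa_{l})=k[\Xi_{w}\cap(1/l)\cdot M]$ is the same for every total order $>$ refining $\ge_{w}$ and depends only on $l$ and $w$. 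I do not expect any genuine obstacle; the one point worth checking is that the argument of \cite[Lem.\ 3.11]{yasuda2012universal}, stated there for the characteristic-$p$ Frobenius, transfers unchanged to the Frobenius-like morphisms $F^{(l)}$ over an arbitrary field, which it does since it concerns only binomial ideals and their Gröbner bases.
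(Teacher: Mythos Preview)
The paper does not supply its own proof of this proposition: it is stated as a citation of \cite[Lem.\ 3.11]{yasuda2012universal} and left without a proof block. So there is nothing to compare your argument against on the paper's side.

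Your sketch is essentially sound, but note a mild circularity: the entire statement \emph{is} \cite[Lem.\ 3.11]{yasuda2012universal}, so invoking that reference for ``the reverse inclusion'' amounts to citing the result for its own nontrivial half. If your intent is to indicate which direction is routine and which requires the work of the cited lemma, that is fine and accurate; the easy inclusion $\Xi_{w}\cap(1/l)M\subset\{\text{exponents in }\In_{>}(\fa_{l})\}$ is exactly as you describe, and the observation that $b$ automatically lies in $(1/l)A$ when $a\in(1/l)A$ and $a-b\in M$ makes this rigorous. Your treatment of the ``in particular'' clause via the order-free description (1) of Lemma~\ref{lem:Xi equiv} is clean and correct. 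The remark about the argument of \cite{yasuda2012universal} transferring from $l=p^{e}$ to arbitrary $l$ and arbitrary base field is also well taken and matches how the present paper uses the result.
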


\begin{defn}
\label{def:chamber}We define a \emph{chamber} of a fan to be the
interior of a nondegenerate cone belonging to the fan. 
\end{defn}

The following result will be used, in the proof of our main result,  to show that the fan \(\Delta\) is divided into sufficiently many cones around a certain point.

\begin{cor}
\label{cor:same-nondeg-Xi}Let $w_{1},w_{2}\in\sigma$ be general
elements. Then, $w_{1}$ and $w_{2}$ are in the same chamber of $\Delta$
if and only if $\Xi_{w_{1}}=\Xi_{w_{2}}$.
\end{cor}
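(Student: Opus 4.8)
The plan is to deduce the corollary from the two facts already in hand: that \(\Delta\) is the Gröbner fan of \(\fa_{l}\) for \(l\gg0\), and that for general \(w\) the leading ideal \(\In_{>}(\fa_{l})\) (for any total order \(>\) refining \(\ge_{w}\)) equals the monomial ideal \(k[\Xi_{w}\cap(1/l)\cdot M]\) and depends only on \(l\) and \(w\) (Proposition \ref{prop:Initial-Xi}). First I would note that a general \(w\in\sigma\) lies in a chamber of \(\Delta^{*}\), and therefore of \(\Delta\). Indeed, I would check that a general \(w\) in fact lies in the interior of \(\sigma\): the primitive generators of the rays of \(\sigma^{\vee}\) lie in \(M_{\le D}\) (each such generator sits in the closure of \(A_{\RR}\setminus\Theta\), a set of diameter \(D\) containing \(0\)), so a general \(w\) avoids \(\partial\sigma\) together with every subdividing hyperplane \(m^{\perp}\), \(m\in M_{\le D}\); since \(\Delta^{*}\) refines \(\Delta\), a chamber of \(\Delta^{*}\) lies inside a chamber of \(\Delta\). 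It follows that for a general \(w\) the monomial ideal \(k[\Xi_{w}\cap(1/l)\cdot M]\) is precisely the monomial initial ideal of \(\fa_{l}\) attached to the chamber of its Gröbner fan that contains \(w\).

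Granting this, the direction ``\(\Xi_{w_{1}}=\Xi_{w_{2}}\Rightarrow\) same chamber'' is immediate: for \(l\gg0\), \(w_{1}\) and \(w_{2}\) then determine the same monomial initial ideal of \(\fa_{l}\), and distinct chambers of a Gröbner fan determine distinct monomial initial ideals, so \(w_{1}\) and \(w_{2}\) lie in the same chamber of \(\Delta\). For the converse I would argue as follows. If \(w_{1},w_{2}\) lie in a common chamber of \(\Delta\), then for every \(l'\gg0\) they lie in a common chamber of the Gröbner fan of \(\fa_{l'}\), whence \(k[\Xi_{w_{1}}\cap(1/l')\cdot M]=k[\Xi_{w_{2}}\cap(1/l')\cdot M]\), i.e. \(\Xi_{w_{1}}\cap(1/l')\cdot M=\Xi_{w_{2}}\cap(1/l')\cdot M\). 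Taking the union over all large \(l'\) and using that the lattices \((1/l')\cdot M\) exhaust \(\QQ^{d}\), I obtain \(\Xi_{w_{1}}\cap\QQ^{d}=\Xi_{w_{2}}\cap\QQ^{d}\). Since each \(\Xi_{w_{i}}\) is a finite union of rational polyhedra — namely \(\Theta=\bigcup_{j}(a_{j}+A_{\RR})\) together with the sets \(A_{\RR}\cap(m+A_{\RR})\) for the \(m\in M_{\le D}\) with \(w_{i}(m)>0\) — it equals the closure of its set of rational points, and therefore \(\Xi_{w_{1}}=\Xi_{w_{2}}\).

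I expect the only genuinely nontrivial point to be this last step: that the real set \(\Xi_{w}\) can be recovered from the discrete data \(\Xi_{w}\cap(1/l)\cdot M\). This works precisely because \(\Xi_{w}\) is cut out by rational linear inequalities, so allowing \(l\) to range over all large integers loses no information; one cannot get this from a single \(l\) by a naive closure argument, which is why the ``vary \(l\)'' device is used. Everything else is formal: Proposition \ref{prop:Initial-Xi} and the standard bijection between the chambers of a Gröbner fan and its monomial initial ideals.
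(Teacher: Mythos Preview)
Your proof is correct and follows essentially the same route as the paper's: both hinge on the identification of $\Delta$ with the Gr\"obner fan of $\fa_l$ for $l\gg0$ and on Proposition~\ref{prop:Initial-Xi}, and both recover $\Xi_w$ from the discrete data $\Xi_w\cap(1/l)M$ by letting $l$ grow. The paper phrases the nontrivial direction as a one-line contrapositive (``if $\Xi_{w_1}\ne\Xi_{w_2}$ then $\Xi_{w_1}\cap(1/l)M\ne\Xi_{w_2}\cap(1/l)M$ for $l\gg0$''), whereas you unpack this by taking the union over all large $l'$, passing to $\QQ^d$, and then invoking that $\Xi_w$ is a finite union of closed rational polyhedra; this is the same idea made explicit. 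Your extra verification that a general $w$ actually lies in a chamber of $\Delta$ (via $a_i\in M_{\le D}$, so $w\in\sigma^\circ$) is a detail the paper leaves implicit.
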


\begin{proof}
Let us denote by \(\In_w(\fa_l)\) the initial ideal in Proposition \ref{prop:Initial-Xi}. 
Since \(\Delta\) is the Gröbner fan of \(\fa_l\), general vectors $w_{1}$ and $w_{2}$ are in the same chamber of $\Delta$
if and only if \(\In_{w_1}(\fa_l) = \In_{w_2}(\fa_l)\) if and only if 
\(
    \Xi_{w_{1}}\cap(1/l)M=\Xi_{w_{2}}\cap(1/l)M
\).
Now, the ``if'' part of the corollary is immediate. The ``only if'' part holds, since if  \(\Xi_{w_{1}}\ne \Xi_{w_{2}}\), then 
\(
    \Xi_{w_{1}}\cap(1/l)M\ne \Xi_{w_{2}}\cap(1/l)M
\) for \(l\gg 0\).
\end{proof}

\section{Level subpolyhedra of $A_{\protect\RR}$\label{sec:Level-subpolyhedra}}

To show our main result, we need to know when the subset \(\Xi_w \subset A_\RR\) changes as a general vector \(w \in \sigma\) varies. We accomplish this task by the geometry of ``arrows'' in the cone \(A_\RR\). In Sections \ref{sec:Level-subpolyhedra} to \ref{sec:Critical-arrows}, we develop tools for this purpose. 

We keep the notation of the last section. In particular, $\sigma\subset N_{\RR}$
is a strictly convex, nondegenerate and rational polyhedral cone.
We fix $w\in B\setminus\{0\}$ and let $\mu$ be the minimal
face of $\sigma$ containing $w$. Thus, $w$ is in the relative interior
$\mu^{\circ}$ of $\mu$. The dual face $\mu^{*}$ of $\mu$ is, by
definition, the face of $\sigma^{\vee}=A_{\RR}$ given by $\sigma^{\vee}\cap\mu^{\perp}$.
From \cite[Exercise 1.2.2]{cox2011toricvarieties}, we have $\mu^{*}=A_{\RR}\cap w^{\perp}$.
From $\dim\mu+\dim\mu^{*}=d$, we also deduce $\langle\mu^{*}\rangle_{\RR}=\mu^{\perp}$.
We introduce certain subsets of the cone $A_{\RR}=\sigma^{\vee}$,
which play a key role in later analysis. 
\begin{defn}
For $c\in\RR_{>0}$, we define the \emph{level $c$ subpolyhedron
}of $A_{\RR}$ to be $\Lambda_{(=c)}^{w}:=A_{\RR}\cap\{w=c\}$ and
the \emph{level $\le c$ subpolyhedron of $A_{\RR}$ }to be $\Lambda_{(\le c)}^{w}:=A_{\RR}\cap\{w\le c\}.$
Here we regard $w$ as a linear function on $M_{\RR}$ and by $\{w=c\}$,
we mean the set $\{a\in M_{\RR}\mid w(a)=c\}$. Similarly for $\{w\le c\}$.
We also define $\Lambda_{(<c)}^{w}:=A_{\RR}\cap\{w<c\}$, although
it is not a polyhedron. When the choice of $w$ is evident from the
context, we simply write $\Lambda_{(=c)}$, $\Lambda_{(\le c)}$ and
$\Lambda_{(<c)}$, omitting the superscript.
\end{defn}

We recall the notion of characteristic cones and its basic properties. 
\begin{defn}[{\cite[p.\ 100]{schrijver1986theoryof}}]
The \emph{characteristic cone} (also called the \emph{recession cone})
$C$ of a polyhedron $\Omega\subset M_{\RR}$ is defined by
\[
C:=\{y\in M_{\RR}\mid\forall x\in\Omega,\,x+y\in\Omega\}.
\]
\end{defn}

\begin{lem}[{\cite[(28), p.\ 106]{schrijver1986theoryof}}]
\label{lem:K+C}Let $C$ be the characteristic cone of a polyhedron
$\Omega$. Then, there exists a polytope $K$ such that $\Omega=K+C$. 
\end{lem}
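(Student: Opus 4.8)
The plan is to deduce this from the Minkowski--Weyl theorem for cones via the standard homogenization trick, since the statement is precisely the decomposition theorem for polyhedra. First I would write $\Omega$ as a finite intersection of closed halfspaces, $\Omega=\{x\in M_{\RR}\mid\langle a_{i},x\rangle\le b_{i},\ i=1,\dots,N\}$ (the case $\Omega=\emptyset$ being trivial, with $K=\emptyset$), and pass to the homogenized cone
\[
\widehat{\Omega}:=\bigl\{(x,t)\in M_{\RR}\times\RR\bigm| t\ge 0,\ \langle a_{i},x\rangle\le tb_{i}\ \text{for all }i\bigr\},
\]
which is cut out by finitely many homogeneous linear inequalities and hence is a polyhedral cone. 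By Minkowski--Weyl it is finitely generated, say $\widehat{\Omega}=\mathrm{cone}\{v_{1},\dots,v_{m}\}$.

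Next I would sort the generators. Since $\widehat{\Omega}$ lies in $\{t\ge 0\}$, after rescaling each $v_{j}$ is of the form $(p_{j},1)$ or $(q_{j},0)$; as the slice $\widehat{\Omega}\cap\{t=1\}$ equals $\Omega\times\{1\}\ne\emptyset$, at least one generator is of the first type. Writing the first-type generators as $(p_{1},1),\dots,(p_{r},1)$ and the second-type ones as $(q_{1},0),\dots,(q_{s},0)$, I set $K:=\mathrm{conv}\{p_{1},\dots,p_{r}\}$, a polytope, and $C':=\mathrm{cone}\{q_{1},\dots,q_{s}\}$. Reading off the slice $\{t=1\}$ then gives $\Omega=K+C'$: indeed $x\in\Omega$ iff $(x,1)$ is a nonnegative combination of the $v_{j}$, and comparing last coordinates forces the coefficients of the $(p_{j},1)$ to sum to $1$, which turns the statement into $x\in K+C'$; the converse inclusion is the same computation read backwards.

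It then remains to identify $C'$ with the characteristic cone $C$. The inclusion $C'\subseteq C$ is immediate from $\Omega=K+C'$ and $C'+C'=C'$. For $C\subseteq C'$, I would fix $x_{0}\in\Omega$; for $y\in C$ and every integer $n\ge1$, write $x_{0}+ny=k_{n}+c_{n}$ with $k_{n}\in K$, $c_{n}\in C'$, so that $y=\tfrac1n(k_{n}-x_{0})+\tfrac1n c_{n}$. Since $K$ is bounded, the first term tends to $0$, hence $\tfrac1n c_{n}\to y$; and since a finitely generated cone is closed, $y\in C'$. This yields $\Omega=K+C$ with $K$ a polytope, as desired.

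The only substantive input is the Minkowski--Weyl theorem that a cone defined by finitely many linear inequalities is finitely generated; this is the step I expect to be the main obstacle if one wants a self-contained proof, and it is most cleanly obtained by Fourier--Motzkin elimination (or simply quoted from the same source as the lemma itself). Everything else is bookkeeping with the homogenization and the elementary fact that in any decomposition $\Omega=K+C'$ with $K$ a polytope the cone $C'$ must be the recession cone.
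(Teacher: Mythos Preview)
Your proof is correct; the paper itself gives no argument for this lemma and simply cites it as a known result from Schrijver's book. The homogenization-plus-Minkowski--Weyl argument you outline is in fact the standard proof (and essentially the one in the cited reference), so there is nothing to compare.
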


\begin{lem}
\label{lem:polytope-chara}A polyhedron $\Omega$ is a polytope if
and only if its characteristic cone is the trivial cone $\{0\}$. 
\end{lem}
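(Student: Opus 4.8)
The plan is to establish the two implications separately, the non-trivial one being an immediate consequence of Lemma \ref{lem:K+C}. Before starting, I would dispose of the degenerate case by assuming $\Omega\neq\emptyset$: if $\Omega=\emptyset$ it is a polytope but its characteristic cone is all of $M_\RR$, so the statement is to be read with this case excluded (equivalently, under the standing convention that the polyhedra under consideration are nonempty, which is the case in all later applications such as $\Lambda_{(\le c)}^{w}$).

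For the direction ``$\Omega$ a polytope $\Rightarrow$ $C=\{0\}$'', recall that a polytope is a bounded polyhedron. Suppose for contradiction that the characteristic cone $C$ contains some $y\neq0$, and fix any $x\in\Omega$. By the definition of $C$ we have $x+y\in\Omega$, hence $x+2y=(x+y)+y\in\Omega$, and inductively $x+ny\in\Omega$ for every $n\in\ZZ_{\ge0}$. Since $|x+ny|\to\infty$ as $n\to\infty$, this contradicts the boundedness of $\Omega$; therefore $C=\{0\}$.

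For the converse ``$C=\{0\}$ $\Rightarrow$ $\Omega$ a polytope'', apply Lemma \ref{lem:K+C}: there exists a polytope $K$ with $\Omega=K+C$. Since $C=\{0\}$, this reads $\Omega=K$, so $\Omega$ is itself a polytope.

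I do not anticipate any genuine obstacle: the substantive input — that every polyhedron decomposes as the Minkowski sum of a polytope and its characteristic (recession) cone — is already supplied by Lemma \ref{lem:K+C}, and the rest is the standard boundedness manipulation. The only point requiring a word of care is the empty polyhedron, which is handled at the outset.
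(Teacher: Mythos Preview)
Your proof is correct and matches the paper's approach: the paper simply writes ``This easily follows from Lemma \ref{lem:K+C},'' and you have spelled out exactly what that one-liner means --- the converse uses $\Omega=K+C$ with $C=\{0\}$, while the forward direction is the obvious boundedness observation. Your caveat about the empty polyhedron is a fair point of hygiene but is not needed in the paper's applications.
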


\begin{proof}
This easily follows from Lemma \ref{lem:K+C}.
\end{proof}
\begin{lem}
\label{lem:chara-cone}For every $c>0$, the characteristic cone of
$\Lambda_{(=c)}$ is the dual face $\mu^{*}$ of $\mu$.
\end{lem}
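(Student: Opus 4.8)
The plan is to compute the characteristic cone $C$ of $\Lambda_{(=c)} = A_\RR \cap \{w = c\}$ directly from the definition and identify it with $\mu^* = A_\RR \cap w^\perp$. First I would establish the inclusion $\mu^* \subset C$. Take $y \in \mu^* = A_\RR \cap w^\perp$ and any $x \in \Lambda_{(=c)}$. Then $x + y \in A_\RR$ because $A_\RR$ is a cone (closed under addition), and $w(x+y) = w(x) + w(y) = c + 0 = c$, so $x + y \in \Lambda_{(=c)}$; hence $y \in C$. This direction only uses that $A_\RR$ is a convex cone and that $w$ vanishes on $\mu^*$.

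For the reverse inclusion $C \subset \mu^*$, take $y \in C$. Fixing a point $x_0 \in \Lambda_{(=c)}$ (nonempty since $w \in B \setminus\{0\}$ and $c>0$, so $w$ takes the value $c$ somewhere on $A_\RR$), we get $x_0 + ty \in \Lambda_{(=c)}$ for all $t \ge 0$; in particular $w(x_0 + ty) = c$ forces $w(y) = 0$, i.e. $y \in w^\perp$. It remains to see $y \in A_\RR$. From $x_0 + ty \in A_\RR$ for all $t \ge 0$ and $A_\RR$ closed, dividing by $t$ and letting $t \to \infty$ gives $y \in A_\RR$ (this is the standard fact that the recession directions of $A_\RR \cap H$ for an affine hyperplane $H$ meeting it are recession directions of $A_\RR$, and the recession cone of the closed convex cone $A_\RR$ is $A_\RR$ itself). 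Thus $y \in A_\RR \cap w^\perp = \mu^*$, using the identification $\mu^* = A_\RR \cap w^\perp$ recalled from \cite[Exercise 1.2.2]{cox2011toricvarieties} just before the lemma.

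There is essentially no serious obstacle here; the only point requiring a little care is the limiting argument showing a recession direction of the slice $\Lambda_{(=c)}$ is a recession direction of $A_\RR$, and the nonemptiness of $\Lambda_{(=c)}$ for $c > 0$. Both are routine: nonemptiness follows because $w \in B = \sigma \cap N$ is nonzero, so as a linear functional on $A_\RR = \sigma^\vee$ it is not identically zero and (being nonnegative on $\sigma^\vee$) attains every positive value, and the limiting argument is immediate from closedness of $A_\RR$ and homogeneity. One could alternatively invoke Lemma \ref{lem:K+C} to write $\Lambda_{(=c)} = K + C$ and argue abstractly, but the direct computation above is shorter.
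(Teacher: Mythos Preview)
Your proof is correct and follows essentially the same approach as the paper: both establish $\mu^{*}\subset C$ directly (the paper calls this ``easy to see''), then show $C\subset w^{\perp}$ via $w(y)=w(x+y)-w(x)=0$ and $C\subset A_{\RR}$, concluding $C\subset A_{\RR}\cap w^{\perp}=\mu^{*}$. The only cosmetic difference is that for $C\subset A_{\RR}$ you use the limiting argument $\tfrac{1}{t}(x_{0}+ty)\to y$, whereas the paper invokes Schrijver's inequality description of characteristic cones to say the recession cone of a slice sits inside that of $A_{\RR}$.
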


\begin{proof}
Let $C$ be the characteristic cone of $\Lambda_{(=c)}$. It is easy
to see $\mu^{*}\subset C$. Since $\Lambda_{(=c)}\subset A_{\RR}$,
$C$ is contained in the characteristic cone of $A_{\RR}$, which
follows from the description of the characteristic cone with inequalities
\cite[(4), p.\ 100]{schrijver1986theoryof}. Since the characteristic
cone of $A_{\RR}$ is itself, we have $C\subset A_{\RR}$. To show
$C\subset w^{\perp}$, we take arbitrary elements $y\in C$ and $x\in\Lambda_{(=c)}$.
From the definition of characteristic cone, we have $x+y\in\Lambda_{(=c)}$.
Thus, we have
\[
w(y)=w(x+y)-w(x)=c-c=0,
\]
which shows $C\subset w^{\perp}$. Since $w\in\mu^{\circ}$, from
\cite[Exercise 1.2.2]{cox2011toricvarieties}, we have
\[
C\subset A_{\RR}\cap w^{\perp}=\mu^{*},
\]
as desired. 
\end{proof}
\begin{cor}
\label{cor:polytope equiv conditions}Let $c>0$. The following are
equivalent: 
\begin{enumerate}
\item $\Lambda_{(=c)}$ is a polytope.
\item $w\in\sigma^{\circ}$.
\item $\mu=\sigma$. 
\item $\mu^{*}=\{0\}$. 
\end{enumerate}
\end{cor}

\begin{proof}
The equivalences (2) $\Leftrightarrow$ (3) and (3) $\Leftrightarrow$
(4) are obvious. The equivalence (1) $\Leftrightarrow$ (4) follows
from Lemmas \ref{lem:polytope-chara} and \ref{lem:chara-cone}. 
\end{proof}
\begin{cor}
\label{cor:Lambda =00003D K + mu^*}For each $c\ge0$, there exists
a polytope $K_{c}\subset M_{\RR}$ such that $\Lambda_{(=c)}=K_{c}+\mu^{*}$.
\end{cor}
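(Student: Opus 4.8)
The plan is to split into the two cases $c>0$ and $c=0$, the first being a direct repackaging of the two lemmas just established and the second requiring a small separate (but elementary) argument.

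For $c>0$, recall that $\Lambda_{(=c)}=A_{\RR}\cap\{w=c\}$ is a polyhedron, being the intersection of the polyhedral cone $A_{\RR}$ with the affine hyperplane $\{w=c\}$. Lemma~\ref{lem:chara-cone} identifies its characteristic cone with the dual face $\mu^{*}$ of $\mu$. Then Lemma~\ref{lem:K+C}, applied to $\Omega=\Lambda_{(=c)}$ with $C=\mu^{*}$, yields a polytope $K_{c}$ with $\Lambda_{(=c)}=K_{c}+\mu^{*}$, which is exactly the assertion. Nothing further is needed in this case.

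For $c=0$ I would argue directly, since Lemma~\ref{lem:chara-cone} was stated only for $c>0$. Here $\Lambda_{(=0)}=A_{\RR}\cap\{w=0\}=A_{\RR}\cap w^{\perp}$, and this equals $\mu^{*}$ by \cite[Exercise 1.2.2]{cox2011toricvarieties} (recall that $w$ lies in the relative interior $\mu^{\circ}$, so that $A_{\RR}\cap w^{\perp}=\mu^{*}$, as noted at the beginning of the section). Since $\mu^{*}=\{0\}+\mu^{*}$ and the one-point set $\{0\}$ is a polytope, the choice $K_{0}:=\{0\}$ works.

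I do not expect any genuine obstacle: the only point deserving a moment's attention is that the case $c=0$ falls outside the hypothesis of Lemma~\ref{lem:chara-cone}, so it must be handled by the explicit identification $\Lambda_{(=0)}=\mu^{*}$ rather than by the general decomposition of polyhedra.
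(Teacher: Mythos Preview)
Your proof is correct and follows the same approach as the paper, which simply writes ``This follows from Lemma~\ref{lem:K+C}'' (implicitly using Lemma~\ref{lem:chara-cone} to identify the characteristic cone). Your explicit treatment of the case $c=0$ is a welcome addition, since Lemma~\ref{lem:chara-cone} is stated only for $c>0$; the paper glosses over this point.
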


\begin{proof}
This follows from Lemma \ref{lem:K+C}.
\end{proof}

\section{Passing to a quotient space\label{sec:quot-space}}

We keep fixing $w\in B\setminus\{0\}$ and denoting by
$\mu$ the minimal face of $\sigma$ containing $w$. In later sections,
we need to study the geometry of level polyhedra $\Lambda_{(=c)}$
and $\Lambda_{(\le c)}$ in relation to the lattice $M$. However,
it is more difficult to do so if $w$ is on the boundary of $\sigma$,
equivalently if $\mu\ne\sigma$, since level polyhedra are unbounded
in that case. To treat it, we adopt the strategy of passing to a quotient
space. 

We define $\overline{M}:=M/(\mu^{\perp}\cap M)$ and $\overline{M}_{\RR}:=M_{\RR}/\mu^{\perp}=\overline{M}\otimes\RR$.
We follow the terminology that overlined symbols are objects in the
quotient space $\overline{M}_{\RR}$. For example, $\overline{A_{\RR}},$
$\overline{\Lambda_{(=c)}}$ and $\overline{w^{\perp}}$ denote the
images of $A_{\RR}$, $\Lambda_{(=c)}$ and $w^{\perp}$ in $\overline{M_{\RR}}$,
respectively. The pairing $M_{\RR}\times N_{\RR}\to\RR$ induces a
non-degenerate pairing $\overline{M}_{\RR}\times\langle\mu\rangle_{\RR}\to\RR$.
Thus, we may regard $\langle\mu\rangle_{\RR}$ as the dual space of
$\overline{M}_{\RR}$. In particular, $w\in\mu$ is also considered
to be a nonzero linear function on $\overline{M}_{\RR}$. Under this
identification, the polyhedron $\overline{\Lambda_{(=c)}}$ is identical
to the intersection of $\overline{A_{\RR}}$ with the hyperplane $\{w=c\}\subset\overline{M}_{\RR}$.

\begin{lem}
\label{lem:bounded}$\overline{\Lambda_{(=c)}}$ is a polytope.
\end{lem}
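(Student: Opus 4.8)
The statement to prove is that $\overline{\Lambda_{(=c)}}$ is a polytope, where $\Lambda_{(=c)} = A_\RR \cap \{w = c\}$ and the overline denotes the image in $\overline{M}_\RR = M_\RR/\mu^\perp$. By Lemma \ref{lem:polytope-chara}, it suffices to show that the characteristic cone of $\overline{\Lambda_{(=c)}}$ is the trivial cone $\{0\}$. The plan is to compute this characteristic cone by relating it to the characteristic cone of $\Lambda_{(=c)}$, which we already identified as $\mu^*$ in Lemma \ref{lem:chara-cone}.

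**Key steps.** First I would observe that $\overline{\Lambda_{(=c)}}$ is indeed a polyhedron: it is the image of the polyhedron $\Lambda_{(=c)}$ under the linear quotient map $\pi\colon M_\RR \to \overline{M}_\RR$, and linear images of polyhedra are polyhedra (this is standard — e.g. Fourier–Motzkin elimination, \cite[Section 7.3 or similar]{schrijver1986theoryof}; alternatively use the identification in the paragraph just before the lemma, that $\overline{\Lambda_{(=c)}} = \overline{A_\RR} \cap \{w=c\}$ inside $\overline{M}_\RR$, which exhibits it directly as an intersection of finitely many halfspaces and a hyperplane). Second, I would show the characteristic cone of $\overline{\Lambda_{(=c)}}$ is contained in $\pi(C)$ where $C$ is the characteristic cone of $\Lambda_{(=c)}$: using Lemma \ref{lem:K+C}, write $\Lambda_{(=c)} = K_c + \mu^*$ with $K_c$ a polytope (this is exactly Corollary \ref{cor:Lambda =00003D K + mu^*}), so $\overline{\Lambda_{(=c)}} = \pi(K_c) + \pi(\mu^*)$; since $\pi(K_c)$ is a polytope, the characteristic cone of the sum is contained in $\pi(\mu^*)$. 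Third — and this is the crux — I would show $\pi(\mu^*) = \{0\}$, i.e. $\mu^* \subset \mu^\perp = \ker\pi$. But $\mu^* = A_\RR \cap \mu^\perp$ by the identity recalled at the start of Section \ref{sec:quot-space} (from \cite[Exercise 1.2.2]{cox2011toricvarieties}), so $\mu^* \subset \mu^\perp$ is immediate. Hence the characteristic cone of $\overline{\Lambda_{(=c)}}$ is $\{0\}$, and Lemma \ref{lem:polytope-chara} finishes the argument.

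**Main obstacle.** There is essentially no deep obstacle here — the lemma is a bookkeeping consequence of the setup. The one point requiring a little care is justifying that the characteristic cone of a Minkowski sum $P + D$, with $P$ a polytope and $D$ a cone, is exactly $D$ (so that after taking the image it is $\pi(D)$); this follows because any recession direction $y$ of $P+D$ must satisfy $(P+D) + \RR_{\ge 0}y \subset P + D$, and since $P$ is bounded the unbounded behaviour is governed entirely by $D$, forcing $y \in D$ (the reverse inclusion $D \subset$ characteristic cone is trivial). Alternatively one can avoid this entirely by arguing directly with the characteristic-cone-via-inequalities description \cite[(4), p.\ 100]{schrijver1986theoryof}: writing $\overline{\Lambda_{(=c)}} = \overline{A_\RR} \cap \{w = c\}$, its characteristic cone is $\overline{C'} \cap w^{(\perp\text{ in }\overline M_\RR)}$ where $C'$ is the characteristic cone of $\overline{A_\RR}$ (the image of $A_\RR$, itself its own characteristic cone); then the same computation as in the proof of Lemma \ref{lem:chara-cone}, now carried out in $\overline{M}_\RR$ with $w$ regarded as a linear functional on $\overline{M}_\RR$ via the nondegenerate pairing $\overline{M}_\RR \times \langle\mu\rangle_\RR \to \RR$, shows this characteristic cone lies in $\overline{A_\RR} \cap w^{(\perp)} = \overline{\mu^*} = \{0\}$, the last equality because $\mu^* \subset \mu^\perp$. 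Either route is short; I would present the first since the Minkowski decomposition $\Lambda_{(=c)} = K_c + \mu^*$ is already in hand from Corollary \ref{cor:Lambda =00003D K + mu^*}.
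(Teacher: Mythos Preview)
Your proposal is correct and follows essentially the same approach as the paper: decompose $\Lambda_{(=c)}=K+\mu^{*}$ via Corollary~\ref{cor:Lambda =00003D K + mu^*}, then use $\mu^{*}\subset\mu^{\perp}=\ker\pi$ to conclude $\overline{\Lambda_{(=c)}}=\overline{K}$ is a polytope. The paper's version is simply terser, noting directly that the image equals $\overline{K}$ rather than routing the conclusion through the characteristic cone and Lemma~\ref{lem:polytope-chara}.
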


\begin{proof}
From Corollary \ref{cor:Lambda =00003D K + mu^*}, we can write $\Lambda_{(=c)}=K+\mu^{*}$
with a polytope $K$. Since $\mu^{*}$ is in the kernel of $M_{\RR}\to\overline{M}_{\RR}$,
we have $\overline{\Lambda_{(=c)}}=\overline{K}$, which is bounded.
\end{proof}
\begin{lem}
\label{lem:lift vertex}For a vertex $\overline{P}\in\overline{\Lambda_{(=c)}}$,
there exists a vertex $P\in\Lambda_{(=c)}$ which maps to $\overline{P}$
by the quotient map $M_{\RR}\to\overline{M}_{\RR}$.
\end{lem}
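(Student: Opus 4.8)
The plan is to produce the vertex $P$ by intersecting $\Lambda_{(=c)}$ with an affine subspace that is the preimage of a point, chosen so that the intersection is a single point which is necessarily a vertex. First I would use the structure $\Lambda_{(=c)} = K_c + \mu^*$ from Corollary \ref{cor:Lambda =00003D K + mu^*}, where $\mu^*$ is the characteristic cone (Lemma \ref{lem:chara-cone}) and $\langle \mu^*\rangle_\RR = \mu^\perp$. Since $\mu^\perp = \ker(M_\RR \to \overline{M}_\RR)$, the quotient map restricted to $\Lambda_{(=c)}$ identifies $\overline{\Lambda_{(=c)}}$ with $\overline{K_c}$, and the fibers of $\Lambda_{(=c)} \to \overline{\Lambda_{(=c)}}$ are exactly translates of $\mu^*$. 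So for the vertex $\overline P$, the preimage $F := \Lambda_{(=c)} \cap q^{-1}(\overline P)$ (with $q$ the quotient map) is a nonempty face of $\Lambda_{(=c)}$ that is a translate of $\mu^*$, i.e.\ $F = P_0 + \mu^*$ for some $P_0$.

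Next I would argue that $F$ is genuinely a \emph{face} of $\Lambda_{(=c)}$, not merely a subset: because $\overline P$ is a vertex of the polytope $\overline{\Lambda_{(=c)}}$, there is a linear functional $\overline\ell$ on $\overline M_\RR$ attaining its minimum over $\overline{\Lambda_{(=c)}}$ uniquely at $\overline P$; pulling $\overline\ell$ back to a functional $\ell$ on $M_\RR$ (vanishing on $\mu^\perp$), $\ell$ attains its minimum over $\Lambda_{(=c)}$ precisely on $F$. Hence $F$ is a face of $\Lambda_{(=c)}$, and $F = P_0 + \mu^*$ is itself a polyhedron whose characteristic cone is $\mu^*$. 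Any vertex $P$ of the face $F$ is then a vertex of $\Lambda_{(=c)}$ (a vertex of a face of a polyhedron is a vertex of the polyhedron), and $P$ maps into $F$, hence $q(P) = \overline P$. It remains only to check $F$ actually has a vertex: $\Lambda_{(=c)} = A_\RR \cap \{w = c\}$ is contained in a strictly convex cone's dual translated, so it is line-free (contains no line), hence so is its face $F$; a nonempty line-free polyhedron has at least one vertex. This gives the desired $P$.

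The step I expect to require the most care is the passage from ``$\overline P$ is a vertex of $\overline{\Lambda_{(=c)}}$'' to ``$F = q^{-1}(\overline P)\cap \Lambda_{(=c)}$ is a face of $\Lambda_{(=c)}$'': one must make sure the supporting functional for $\overline P$ pulls back correctly and that the minimizing set upstairs is exactly the fiber, which uses that $q$ maps $\Lambda_{(=c)}$ onto $\overline{\Lambda_{(=c)}}$ and that fibers are the $\mu^*$-cosets. Everything else — line-freeness giving existence of a vertex, and a vertex of a face being a vertex of the whole polyhedron — is standard polyhedral combinatorics and can be cited from \cite{schrijver1986theoryof}. An alternative, slightly slicker route avoiding face language: pick a linear functional $\ell$ on $M_\RR$ that descends to $\overline M_\RR$ and is in ``general position'' relative to $\overline{\Lambda_{(=c)}}$'s edges at $\overline P$, then minimize $\ell$ over $\Lambda_{(=c)}$; since $\ell$ is bounded below on $\Lambda_{(=c)}$ (as it is bounded below on $\overline{\Lambda_{(=c)}}$ and constant on fibers only if it vanishes on $\mu^*$, which we arrange while still separating $\overline P$) and $\Lambda_{(=c)}$ is line-free, the minimum is attained at a vertex $P$, whose image must be the $\ell$-minimizer $\overline P$. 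I would go with whichever of these two phrasings is cleaner once the details are written out.
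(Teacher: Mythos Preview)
Your proposal is correct and follows essentially the same argument as the paper: take a supporting functional (equivalently, hyperplane) isolating $\overline{P}$ in $\overline{\Lambda_{(=c)}}$, pull it back to $M_\RR$, observe that the resulting face $I$ of $\Lambda_{(=c)}$ is exactly the fiber over $\overline{P}$, and then use that $\Lambda_{(=c)}\subset A_\RR$ is line-free so $I$ has a vertex, which is then a vertex of $\Lambda_{(=c)}$ mapping to $\overline{P}$. The paper phrases the first step with a supporting hyperplane $\overline H$ and its preimage $H$ rather than a functional, but this is the same content; your extra observation that $F=P_0+\mu^*$ is true but not needed.
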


\begin{proof}
Let $\overline{H}\subset\overline{M}_{\RR}$ be a supporting hyperplane
of $\overline{\Lambda_{(=c)}}$ with $\overline{H}\cap\overline{\Lambda_{(=c)}}=\overline{P}$
and let $H\subset M_{\RR}$ be its preimage. Then, $H$ is a supporting
hyperplane of $\Lambda_{(=c)}$. Let $I:=H\cap\Lambda_{(=c)}$, which
is a face of $\Lambda_{(=c)}$ and maps to $\overline{P}$ by the
quotient map $M_{\RR}\to\overline{M}_{\RR}$. Since $I$ is contained
in the strongly convex cone $A_{\RR}$, it contains no line. Since
a minimal face of a polyhedron is an affine subspace \cite[p.\ 104]{schrijver1986theoryof},
a minimal face of $I$ is a vertex. In particular, $I$ has a vertex,
which is automatically a vertex of $\Lambda_{(=c)}$ and maps to $\overline{P}$
by construction.
\end{proof}

\section{Critical arrows\label{sec:Critical-arrows}}

In this section, we give a certain geometric condition for the given
ray $\RR_{\ge0}w$ in $\sigma$ belonging to the fan $\Delta$. As
the key notion to state this condition, we introduce the notion of
\emph{critical arrows}:
\begin{defn}
By an \emph{arrow}, we mean an ordered pair $\alpha=(\bh_{\alpha},\bt_{\alpha})$
of distinct points $\bh_{\alpha},\bt_{\alpha}\in M_{\RR}$, which
we call the \emph{head }and \emph{tail, }respectively. The \emph{vector
associated to }an arrow $\alpha$ is defined to be $\bv_{\alpha}:=\hd_{\alpha}-\tl_{\alpha}$.
We say that an arrow $\alpha$ is \emph{integral }if $\vector_{\alpha}\in M$
and \emph{primitive }if $\vector_{\alpha}$ is a primitive element
of $M$. 
\end{defn}

\begin{defn}
Let $P,Q\in\RR^{d}$ and let $\alpha$ be an integral arrow. We say
that $P$ is $Q$-\emph{integral }if $P-Q\in\ZZ^{d}$. We say that
$P$ is $\alpha$\emph{-integral }if $P-\hd_{\alpha}$ is integral
or equivalently if $P-\tl_{\alpha}$ is integral. 
\end{defn}

The fixed identification $M=\ZZ^{d}$ induces an identification $N=\ZZ^{d}$
and hence the standard Euclidean metric on $N_{\RR}$. Recall that
a \emph{chamber} of a fan $\Sigma$ means the interior of a nondegenerate
cone of $\Sigma$ (Definition \ref{def:chamber}).
\begin{defn}
\label{def:tie-breaker}Let $w\in\sigma\setminus\{0\}$ and let $w^{\perpinner}\subset N_{\RR}$
be its inner orthogonal space. A \emph{tie-breaker }of $w$ is an
element $b\in w^{\perpinner}$ such that $w+b$
is in a chamber of $\Delta^{*}$ whose closure contains $w$. 
\end{defn}

\begin{defn}
\label{def:critical arrows}
Let $w\in\sigma\setminus\{0\}$ and let $b$ be a tie-breaker of \(w\). 
We say that an arrow $\alpha$ in $A_{\RR}$
is\emph{ $(w,b)$-critical} if for some $c\in\RR_{>0}$, 
\begin{enumerate}
\item $\alpha$ is integral,
\item $\hd_{\alpha},\tl_{\alpha}\in\Lambda_{(=c)}$,
\item there is no $\alpha$-integral point in $\Lambda_{(<c)}$,
\item $\tl_{\alpha}$ is the unique point where the restriction $b|_{\Lambda_{(=c)}}$ of the linear function $b\colon M_{\RR}\to\RR$
takes the minimum value.
\end{enumerate}
We say that an arrow is \emph{$w$-critical} if it is $(w,b)$-critical
for some tie-breaker $b$ of $w$.
\end{defn}

Note that from the second condition above, a $w$-critical arrow has
the associated vector $\vector_{\alpha}$ orthogonal to $w$.

\begin{lem}
\label{lem:tail not in Xi}If an arrow $\alpha$ in $A_{\RR}$ is
$(w,b)$-critical, then $\tl_{\alpha}\notin\Xi_{w+\delta b}$ for
every $\delta\in(0,1]$.
\end{lem}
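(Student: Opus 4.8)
The goal is to show that if $\alpha$ is $(w,b)$-critical then its tail $\tl_\alpha$ lies outside $\Xi_{w+\delta b}$ for all $\delta \in (0,1]$. Recall from Definition \ref{def:Xi} (via Lemma \ref{lem:Xi equiv}, item (2)) that $\Xi_{w'}$ consists of those $a \in A_\RR$ for which there exists $b' \in A_\RR$ with $a - b' \in M$ and $w'(a) > w'(b')$. So I want to show: there is \emph{no} $b' \in A_\RR$ with $\tl_\alpha - b' \in M$ and $(w+\delta b)(\tl_\alpha) > (w+\delta b)(b')$. The strategy is to argue by contradiction and split the comparison according to the $w$-value.

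\textbf{Step 1 (reduce to the level set).} Suppose such a $b'$ exists. Write $c := w(\tl_\alpha)$, the level for which $\alpha$ is $(w,b)$-critical, so $\tl_\alpha, \hd_\alpha \in \Lambda_{(=c)}$. Since $\tl_\alpha - b' \in M$, the point $b'$ is $\alpha$-integral. Condition (3) of $(w,b)$-criticality says there is no $\alpha$-integral point in $\Lambda_{(<c)}$, i.e. no $\alpha$-integral point of $A_\RR$ with $w$-value strictly less than $c$. Hence $w(b') \ge c = w(\tl_\alpha)$. Combined with $(w+\delta b)(\tl_\alpha) > (w+\delta b)(b')$, i.e. $w(\tl_\alpha) - w(b') > \delta\,(b(b') - b(\tl_\alpha))$, and $\delta > 0$, this forces $w(b') = c$ (so $b' \in \Lambda_{(=c)}$) and $b(b') < b(\tl_\alpha)$.

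\textbf{Step 2 (contradict the tie-breaker minimality).} But condition (4) says $\tl_\alpha$ is the \emph{unique} point of $\Lambda_{(=c)}$ at which $b|_{\Lambda_{(=c)}}$ attains its minimum. Thus for every $b' \in \Lambda_{(=c)}$ with $b' \ne \tl_\alpha$ we have $b(b') > b(\tl_\alpha)$, and $b' = \tl_\alpha$ is excluded because then $\tl_\alpha - b' = 0$ gives $w(\tl_\alpha) = w(b')$, violating the strict inequality $(w+\delta b)(\tl_\alpha) > (w+\delta b)(b')$ (as $b(b')=b(\tl_\alpha)$ too). Either way $b(b') < b(\tl_\alpha)$ is impossible. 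This is the contradiction, and the lemma follows.

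\textbf{On the main obstacle.} The substantive content is really in Step 1: one must make sure that the only way the weighted inequality $(w+\delta b)(\tl_\alpha) > (w+\delta b)(b')$ can hold, given $w(b') \ge w(\tl_\alpha)$, is that the $w$-parts are actually equal and the $b$-parts are strictly ordered the ``wrong'' way — this uses $\delta \le 1$ only mildly (any $\delta>0$ works for this step; the bound $\delta\le 1$ presumably matters for compatibility with the tie-breaker lying in a chamber of $\Delta^*$ whose closure contains $w$, i.e. so that $b$ is genuinely a tie-breaker and $\Xi_{w+\delta b}$ is well-defined as in Definition \ref{def:Xi}). I should also confirm at the outset that $w + \delta b$ is general in the sense of Definition \ref{def:Xi}, so that $\Xi_{w+\delta b}$ makes sense; this follows from $b$ being a tie-breaker (Definition \ref{def:tie-breaker}), since $w+\delta b$ then lies in a chamber of $\Delta^*$ for $\delta \in (0,1]$, and points of chambers of $\Delta^*$ are general by definition of $\Delta^*$ as the subdivision by the hyperplanes $m^\perp$, $m \in M_{\le D}$. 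Once those preliminaries are in place, the dichotomy in Steps 1--2 closes the argument cleanly.
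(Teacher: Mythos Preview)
Your Step~1 contains a genuine gap. From $w(b')\ge c$ and $(w+\delta b)(\tl_\alpha)>(w+\delta b)(b')$ you correctly deduce $b(b')<b(\tl_\alpha)$, but the conclusion ``this forces $w(b')=c$'' does not follow. Since $\tl_\alpha-b'\in M$ and $w\in N$, we only know $w(b')\in c+\ZZ_{\ge 0}$; nothing prevents $w(b')=c+n$ with $n\ge 1$ provided $b(b')<b(\tl_\alpha)-n/\delta$. There is no a priori lower bound on $b(b')$ for $b'\in A_\RR$: the tie-breaker $b$ lies in $w^{(\perp)}$, not in $\sigma$, so the linear functional $b$ is generally unbounded below on $A_\RR$. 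Thus the case $b'\in\Lambda_{(>c)}$ is not ruled out by your argument, and Step~2 (which only treats $b'\in\Lambda_{(=c)}$) never gets the chance to apply.

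This is precisely the case the paper handles with extra work. The paper first invokes Corollary~\ref{cor:same-nondeg-Xi}: since all $w+\delta b$ for $\delta\in(0,1]$ lie in the same chamber of $\Delta^*$ (your preliminary remark), the sets $\Xi_{w+\delta b}$ coincide, so it suffices to treat a \emph{sufficiently small} $\delta$. For such $\delta$, the paper argues that the hyperplane section $\{w+\delta b=(w+\delta b)(\tl_\alpha)\}\cap A_\RR$ is close enough to $\Lambda_{(=c)}$ to lie inside $\Lambda_{(<c+\epsilon)}$ for a fixed small $\epsilon$; since $\alpha$-integral points in $\Lambda_{(>c)}$ have $w$-value at least $c+\epsilon$ (by discreteness), they all lie in the half-space $\{w+\delta b>(w+\delta b)(\tl_\alpha)\}$. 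Only after this reduction does the analogue of your Step~2 handle the remaining points on $\Lambda_{(=c)}$. Your direct attack for arbitrary $\delta\in(0,1]$ cannot succeed without first passing through this constancy-of-$\Xi$ reduction (or an equivalent device bounding the candidate $b'$), because for $\delta$ near $1$ the $\Lambda_{(>c)}$ case genuinely requires it.
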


\begin{proof}
Let  $\alpha$ be a $(w,b)$-critical arrow in  $A_{\RR}$.
From Corollary \ref{cor:same-nondeg-Xi},  the sets \(\Xi_{w+\delta b}\), \(\delta \in (0,1]\)
are identical to one another. Therefore, it suffices to show $\tl_{\alpha}\notin\Xi_{w+\delta b}$ for a sufficiently small \(   \delta >0 \). 
In turn, from  Definition \ref{def:Xi}, it suffices to show that there is no \(\alpha\)-integral point \( z \in A_\RR \) with \(\tl_{\alpha} \ge_{w+ \delta b} z\). From (3) of Definition \ref{def:critical arrows},  there is no such point in \(\Lambda_{(<c)}\). We now claim that for a sufficiently small  \(\delta>0\), every \(\alpha\)-integral point in \(\Lambda_{>c}:=A_\RR\cap \{w>c \}\) is larger than \(\bt_\alpha\) with respect to the partial order \(\ge_{w+\delta b}\). Indeed, for a sufficiently small  \(\epsilon>0\), there is no \(\alpha\)-integral point in \(\Lambda_{(<c+\epsilon)}\setminus\Lambda_{(\le c)}\). 
For \(0<\delta \ll \epsilon\), 
the hyperplane section 
\[
\{ w+\delta b = (w+\delta b)(\bt_\alpha) \}\cap A_\RR
\]
is sufficiently close to the 
the hyperplane section 
\[
\{ w = w(\bt_\alpha) (=c) \}\cap A_\RR
\]
and hence is contained in \(\Lambda_{(<c+\epsilon)}\). In this situation, all \(\alpha\)-integral points in \(\Lambda_{(>c)}\) are also in the halfspace 
\[
    \{ w+\delta b > (w+\delta b)(\bt_\alpha) \},
\]
which shows the claim.
Thus, the only remaining possibility is points on \(\Lambda_{(=c)}\). But,  (4) of Definition \ref{def:critical arrows} shows that \(\bt_\alpha\) is the unique minimal element in \(\Lambda_{(=c)}\) with respect to \(\ge_{w+\delta b}\), and hence there is no such point on \(\Lambda_{(=c)}\), either. We have completed the proof. 
\end{proof}

\begin{lem}
\label{lem:three-same-chamber}Let $w\in\sigma\setminus\{0\}$ and
let $b$ be a tie-breaker of $w$. Let $\tau$ be the cone of $\Delta$
with $w\in\tau^{\circ}$. Let $\langle\tau\rangle_{\RR}\subset N_{\RR}$
be the linear subspace generated by $\tau$ and let $V_{\tau}:=\langle\tau\rangle_{\RR}\cap w^{\perpinner}$.
For $u\in V_{\tau}$, if $0<\varepsilon\ll\delta\ll1$, then the three
elements 
\[
w+b,\,w+\varepsilon b+\delta u,\,w+\varepsilon b-\delta u
\]
are all in the same chamber of $\Delta$.
\end{lem}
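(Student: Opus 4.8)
The plan is to produce a single \(d\)-dimensional cone \(\tau^{+}\) of \(\Delta\) whose chamber \((\tau^{+})^{\circ}\) contains all three vectors; the natural candidate is the chamber singled out by the tie-breaker \(b\) itself. \emph{Locating this chamber.} Since \(b\) is a tie-breaker of \(w\), Definition \ref{def:tie-breaker} provides a chamber \(C\) of \(\Delta^{*}\) with \(w+b\in C\) and \(w\in\overline{C}\). Because \(\Delta^{*}\) refines \(\Delta\), the \(d\)-dimensional cone \(\overline{C}\) is contained in a unique cone of \(\Delta\), which is again \(d\)-dimensional; call it \(\tau^{+}\). As \(C\) is a full-dimensional open set contained in \(\tau^{+}\), it avoids \(\partial\tau^{+}\), so \(w+b\in(\tau^{+})^{\circ}\), while \(w\in\overline{C}\subseteq\tau^{+}\). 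Now \(\tau^{+}\) is a cone of \(\Delta\) containing \(w\), and \(\tau\) is by hypothesis the unique cone of \(\Delta\) with \(w\in\tau^{\circ}\); hence the face of \(\tau^{+}\) whose relative interior contains \(w\) must be \(\tau\). In particular \(\tau\) is a face of \(\tau^{+}\), so \(\tau\subseteq\tau^{+}\) and \(\mathrm{aff}(\tau)=\langle\tau\rangle_{\RR}\).

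\emph{Perturbing.} Since \(u\in V_{\tau}\subseteq\langle\tau\rangle_{\RR}=\mathrm{aff}(\tau)\) and \(w\) lies in the relative interior \(\tau^{\circ}\), there is \(\delta_{0}>0\) such that \(w+tu\in\tau^{\circ}\subseteq\tau^{+}\) for all \(t\) with \(|t|\le 2\delta_{0}\); note that only the membership \(u\in\langle\tau\rangle_{\RR}\) is used here, not the orthogonality \(u\in w^{\perpinner}\). Fix any \(0<\varepsilon\le 1/2\) and \(0<\delta\le\delta_{0}\). From the identity
\[
w+\varepsilon b\pm\delta u=(1-\varepsilon)\left(w\pm\frac{\delta}{1-\varepsilon}\,u\right)+\varepsilon(w+b),
\]
the left-hand side is a convex combination, with strictly positive weight \(\varepsilon\) on the point \(w+b\in(\tau^{+})^{\circ}\), of that point and of \(w\pm\frac{\delta}{1-\varepsilon}u\), which lies in \(\tau^{+}\) because \(\frac{\delta}{1-\varepsilon}\le 2\delta_{0}\). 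Since a convex combination of a point of a closed convex set and an interior point, with positive weight on the interior point, is itself interior, we conclude \(w+\varepsilon b\pm\delta u\in(\tau^{+})^{\circ}\). Together with \(w+b\in(\tau^{+})^{\circ}\), this puts all three vectors in the single chamber \((\tau^{+})^{\circ}\) of \(\Delta\); the stated hypothesis \(0<\varepsilon\ll\delta\ll 1\) is more than enough, since all we use is \(\delta\le\delta_{0}\) and \(\varepsilon\le 1/2\).

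\emph{Main obstacle.} The only genuinely delicate point is the first step — identifying \(\tau\) as a face of the chamber \(\tau^{+}\) selected by \(b\) — which rests on the fact that \(\Delta^{*}\) refines \(\Delta\) together with the defining property of a tie-breaker, namely that \(w\) lies in the closure of the \(\Delta^{*}\)-chamber of \(w+b\); once this is in place, the remainder is elementary convexity. (An alternative route is to check directly that \(\Xi_{w+b}=\Xi_{w+\varepsilon b+\delta u}=\Xi_{w+\varepsilon b-\delta u}\) for \(0<\varepsilon\ll\delta\ll 1\) and apply Corollary \ref{cor:same-nondeg-Xi}, but the argument above seems shorter and avoids re-examining \(\Xi\).)
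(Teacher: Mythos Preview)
Your proof is correct and follows the same approach as the paper's: both first choose \(\delta\) small so that \(w\pm\delta u\in\tau^{\circ}\), then argue that adding a small multiple of \(b\) pushes these points into the chamber of \(\Delta\) containing \(w+b\). The paper's proof is a two-sentence sketch relying on a figure, whereas you make the argument rigorous by explicitly identifying \(\tau\) as a face of the \(d\)-dimensional cone \(\tau^{+}\) and using the convex combination \(w+\varepsilon b\pm\delta u=(1-\varepsilon)(w\pm\tfrac{\delta}{1-\varepsilon}u)+\varepsilon(w+b)\); this is a clean way to fill the gap.
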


\begin{proof}
We take a sufficiently small $\delta$ so that $w\pm\delta u\in\tau^{\circ}$.
If we add these two points with $\varepsilon b$ $(0<\varepsilon\ll\delta)$,
they move slightly and enter the chamber of $\Delta$ containing $w+b$.
See Figure \ref{fig:diagram-2}.
\end{proof}
\begin{figure}
\includegraphics[scale=0.5]{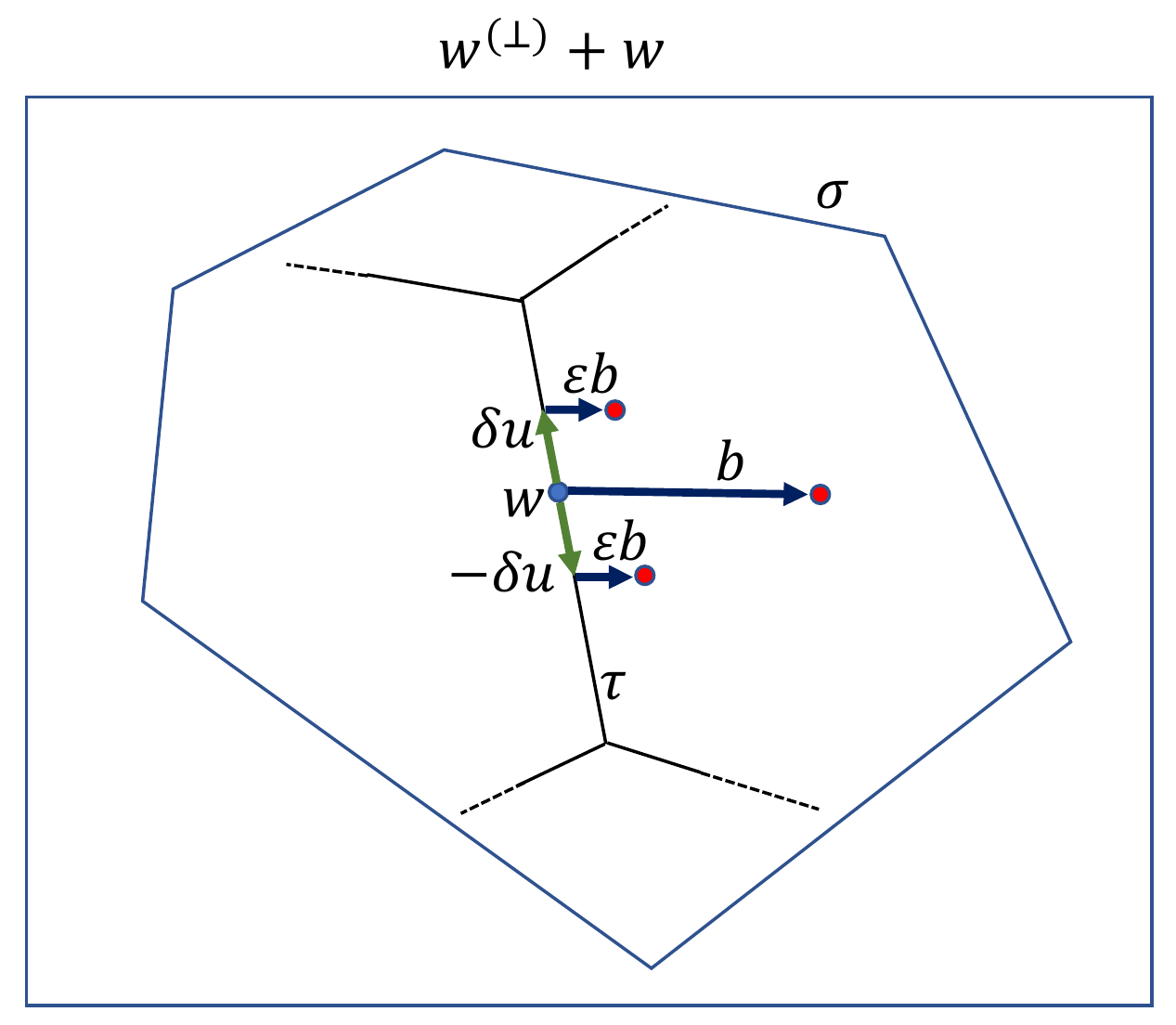}

\caption{Three points in the same chamber}
\label{fig:diagram-2}
\end{figure}

\begin{thm}
Let $0\ne w\in B$. Suppose that there exist $w$-critical
arrows $\alpha_{1},\dots,\alpha_{l}$ in $A_{\RR}$. Let $(\vector_{\alpha_{i}})^{\perp}\subset N_{\RR}$
be the (outer) orthogonal subspace of $\vector_{\alpha_{i}}$. Then,
the minimal cone of $\Delta$ containing $w$ is contained in $\bigcap_{i=1}^{l}(\vector_{\alpha_{i}})^{\perp}$.
\end{thm}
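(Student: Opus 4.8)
The plan is to show that the minimal cone $\tau$ of $\Delta$ containing $w$ is contained in each $(\bv_{\alpha_i})^\perp$ separately; since intersection is then automatic, I can fix one $w$-critical arrow $\alpha = \alpha_i$ with tie-breaker $b$ and level value $c$, and aim to prove $\tau \subseteq (\bv_\alpha)^\perp$. Equivalently, writing $V_\tau := \langle\tau\rangle_\RR \cap w^{(\perp)}$ as in Lemma \ref{lem:three-same-chamber}, it suffices to show that $(\bv_\alpha, u) = 0$ for every $u \in V_\tau$ (we already know $(\bv_\alpha, w) = 0$ from the remark after Definition \ref{def:critical arrows}, so orthogonality to all of $\langle\tau\rangle_\RR = \RR w \oplus V_\tau$ follows).

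So fix $u \in V_\tau$ and suppose for contradiction that $(\bv_\alpha, u) \ne 0$; after replacing $u$ by $-u$ we may assume $(\bv_\alpha, u) > 0$, i.e. $\hd_\alpha >_u \tl_\alpha$. Now apply Lemma \ref{lem:three-same-chamber}: for $0 < \varepsilon \ll \delta \ll 1$ the three vectors $w+b$, $w+\varepsilon b + \delta u$, $w + \varepsilon b - \delta u$ all lie in one chamber of $\Delta$, hence (after perturbing into general position within that chamber, or invoking Corollary \ref{cor:same-nondeg-Xi}) $\Xi_{w+b} = \Xi_{w+\varepsilon b+\delta u} = \Xi_{w+\varepsilon b - \delta u}$. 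The key point is then to compare the membership of $\tl_\alpha$ and $\hd_\alpha$ in these $\Xi$-sets. On one hand, Lemma \ref{lem:tail not in Xi} applied with the tie-breaker $b$ gives $\tl_\alpha \notin \Xi_{w+\delta' b}$ for $\delta' \in (0,1]$, and by the chamber equality this means $\tl_\alpha \notin \Xi_{w+\varepsilon b + \delta u}$. On the other hand, I claim $\hd_\alpha \in \Xi_{w + \varepsilon b - \delta u}$: indeed $\hd_\alpha - \tl_\alpha = \bv_\alpha \in M$, $\tl_\alpha \in A_\RR$, and for $0 < \varepsilon \ll \delta$ the weight $w + \varepsilon b - \delta u$ evaluated on $\hd_\alpha - \tl_\alpha$ is $\varepsilon(b,\bv_\alpha) - \delta(u,\bv_\alpha)$, which is $<0$ since $(u,\bv_\alpha)>0$ and $\varepsilon \ll \delta$ (here I use $(w,\bv_\alpha)=0$); hence $\tl_\alpha >_{w+\varepsilon b - \delta u} \hd_\alpha$, so by Definition \ref{def:Xi} (characterization (2) of Lemma \ref{lem:Xi equiv}) we get $\hd_\alpha \in \Xi_{w+\varepsilon b - \delta u} = \Xi_{w + \varepsilon b + \delta u}$.

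Now I have $\hd_\alpha \in \Xi_{w+\varepsilon b + \delta u}$ but need a contradiction with $\tl_\alpha \notin \Xi_{w+\varepsilon b + \delta u}$. The contradiction should come from the fact that $\Xi$-sets are "upward closed" in a suitable sense along directions that decrease the weight: more precisely, since $\hd_\alpha \in \Xi_{w'}$ (writing $w' := w + \varepsilon b + \delta u$), there is $z \in A_\RR$ with $\hd_\alpha - z \in M$ and $\hd_\alpha >_{w'} z$; but then $\tl_\alpha - z = (\tl_\alpha - \hd_\alpha) + (\hd_\alpha - z) \in M$, and $w'(\tl_\alpha) = w'(\hd_\alpha) + w'(\bv_\alpha)$. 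The catch is the sign of $w'(\bv_\alpha) = \varepsilon(b,\bv_\alpha) - \delta(u,\bv_\alpha) < 0$, which goes the wrong way: it shows $w'(\tl_\alpha) < w'(\hd_\alpha)$, not $>$. This tells me the argument must instead run through the *other* chamber-equal vector $w + \varepsilon b - \delta u$, or, more likely, that the correct contradiction is obtained by comparing $\Xi_{w+\varepsilon b + \delta u}$ with $\Xi_{w+b}$ directly and tracking where $\tl_\alpha$ sits relative to conditions (3) and (4) of Definition \ref{def:critical arrows} as $u$ is switched in sign. The main obstacle — and the step I would spend the most care on — is precisely this bookkeeping: organizing the three weight vectors so that $\tl_\alpha$ is forced both in and out of a single $\Xi$-set. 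I expect the clean statement to be: if $(\bv_\alpha, u) \ne 0$ then $\Xi_{w + \varepsilon b + \delta u} \ne \Xi_{w+\varepsilon b - \delta u}$ because $\tl_\alpha$ (or $\hd_\alpha$) lies in exactly one of them, contradicting Lemma \ref{lem:three-same-chamber} via Corollary \ref{cor:same-nondeg-Xi}. Once that asymmetry is pinned down using conditions (3)–(4) of criticality (condition (4), the uniqueness of the $b$-minimizer, is what prevents $\tl_\alpha$ from re-entering $\Xi$ after a small perturbation), the theorem follows.
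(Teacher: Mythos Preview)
Your approach is exactly the paper's, but a head/tail swap in the middle sent you on an unnecessary detour. From $\tl_\alpha >_{w'} \hd_\alpha$ (with $w' = w + \varepsilon b - \delta u$), characterization (2) of Lemma~\ref{lem:Xi equiv} gives $\tl_\alpha \in \Xi_{w'}$, not $\hd_\alpha \in \Xi_{w'}$: it is the element that is \emph{larger} under $w'$ that lands in $\Xi_{w'}$, witnessed by the smaller one. With this correction the contradiction is immediate and there is nothing more to do: by Lemma~\ref{lem:three-same-chamber} and Corollary~\ref{cor:same-nondeg-Xi} you have $\Xi_{w'} = \Xi_{w+b}$, so $\tl_\alpha \in \Xi_{w+b}$, contradicting Lemma~\ref{lem:tail not in Xi}. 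No comparison of $\Xi_{w+\varepsilon b + \delta u}$ against $\Xi_{w+\varepsilon b - \delta u}$ and no further appeal to conditions (3)--(4) of criticality is needed. The paper's proof is precisely this corrected version of your argument, phrased as choosing the sign so that $(\bv_\alpha,\varepsilon b + \delta u) < 0$ and then observing $\tl_\alpha \in \Xi_{w+\varepsilon b + \delta u}$ while $\tl_\alpha \notin \Xi_{w+b}$.
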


\begin{proof}
Let $\tau\in\Delta$ be the minimal cone containing $w$. For each
$i$, we take a tie-breaker $b_{i}$ of $w$ such that $\alpha_{i}$
is $(w,b_{i})$-critical. To show that $V_{\tau}\subset(\vector_{\alpha_{i}})^{\perp}$
by contradiction, we take an element $u\in V_{\tau}\setminus(\vector_{\alpha_{i}})^{\perp}$
and choose $\varepsilon$ and $\delta$ such that $0<\varepsilon\ll\delta\ll1$.
Then, $w\pm\delta u\in\tau$. From Lemma \ref{lem:tail not in Xi},
we have 
\[
\tl_{\alpha_{i}}\notin\Xi_{w+b_{i}}=\Xi_{w+\varepsilon b_{i}}.
\]
Since $|(\vector_{\alpha_{i}},\varepsilon b_{i})|\ll|(\vector_{\alpha_{i}},\delta u)|$,
real numbers $(\vector_{\alpha_{i}},\varepsilon b_{i}+\delta u)$
and $(\vector_{\alpha_{i}},\varepsilon b_{i}-\delta u)$ have different
signs, say $(\vector_{\alpha_{i}},\varepsilon b_{i}+\delta v)<0$.
Since $\bv_{\alpha_{i}}$ is orthogonal to $w$, we have 
\begin{align*}
 & (w+\varepsilon b_{i}+\delta u)(\bh_{\alpha_{i}})-(w+\varepsilon b_{i}+\delta u)(\bt_{\alpha_{i}})\\
 & =(\varepsilon b_{i}+\delta u)(\bh_{\alpha_{i}})-(\varepsilon b_{i}+\delta u)(\bt_{\alpha_{i}})\\
 & =(\varepsilon b_{i}+\delta u)(\bv_{\alpha_{i}})\\
 & <0.
\end{align*}
Thus, 
\[
\tl_{\alpha_{i}}\in\Xi_{w+\varepsilon b_{i}+\delta u},
\]
and $\Xi_{w+b_{i}}\ne\Xi_{w+\epsilon b_{i}+\delta u}$, which means
that $w+b_{i}$ and $w+\varepsilon b_{i}+\delta u$ are in different
chambers of $\Delta$. From Lemma \ref{lem:three-same-chamber}, $u\notin V_{\tau}$,
a contradiction as desired. We have showed that $V_{\tau}\subset(\vector_{\alpha_{i}})^{\perp}$.
It follows that $V_{\tau}\subset\bigcap_{i=1}^{l}(\vector_{\alpha_{i}})^{\perp}$.
We get
\[
\tau\subset\langle\tau\rangle_{\RR}=\RR w+V_{\tau}\subset\bigcap_{i=1}^{l}(\vector_{\alpha_{i}})^{\perp}.
\]
\end{proof}
\begin{cor}
\label{cor:lin-indep-ray}Let $0\ne w\in B$ and let $\mu$
be the minimal face of $\sigma$ containing $w$. Suppose that there
exist $w$-critical arrows $\alpha_{1},\dots,\alpha_{l}$ in $A_{\RR}$
such that $\overline{\vector_{\alpha_{1}}},\dots,\overline{\vector_{\alpha_{l}}}$
are linearly independent vectors of $\overline{M}_{\RR}=M_{\RR}/\mu^{\perp}$.
Then, the minimal cone of $\Delta$ containing $w$ has dimension
at most $\dim\mu-l.$ 
\end{cor}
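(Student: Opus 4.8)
The plan is to deduce this corollary directly from the theorem just proved, which already shows that the minimal cone $\tau$ of $\Delta$ containing $w$ lies inside $\bigcap_{i=1}^{l}(\vector_{\alpha_{i}})^{\perp}$; the remaining work is purely linear-algebraic, converting the linear independence of the $\overline{\vector_{\alpha_i}}$ in $\overline{M}_{\RR}$ into the claimed dimension bound.

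First I would record that $\tau\subset\mu$. Since $\Delta$ is the fan of a toric birational morphism to the affine toric variety $X_{\sigma}$, it refines the fan of faces of $\sigma$, so the face $\mu$ is a union of cones of $\Delta$. As $w\in\mu^{\circ}\subset\mu$, the unique cone of $\Delta$ having $w$ in its relative interior, namely $\tau$, is then contained in $\mu$, and hence $\langle\tau\rangle_{\RR}\subset\langle\mu\rangle_{\RR}$. Combining this with the theorem, and using that $\bigcap_{i=1}^{l}(\vector_{\alpha_{i}})^{\perp}$ is a linear subspace (so that it contains $\langle\tau\rangle_{\RR}$ once it contains $\tau$), I obtain
\[
\langle\tau\rangle_{\RR}\subset\langle\mu\rangle_{\RR}\cap\bigcap_{i=1}^{l}(\vector_{\alpha_{i}})^{\perp}.
\]

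Next I would compute the dimension of the right-hand space. Recall from Section \ref{sec:quot-space} that the pairing $M_{\RR}\times N_{\RR}\to\RR$ descends to a non-degenerate pairing $\overline{M}_{\RR}\times\langle\mu\rangle_{\RR}\to\RR$, so that each image $\overline{\vector_{\alpha_{i}}}\in\overline{M}_{\RR}$ is precisely the linear functional $v\mapsto(\vector_{\alpha_{i}},v)$ on $\langle\mu\rangle_{\RR}$, whose kernel is $(\vector_{\alpha_{i}})^{\perp}\cap\langle\mu\rangle_{\RR}$. Therefore $\langle\mu\rangle_{\RR}\cap\bigcap_{i=1}^{l}(\vector_{\alpha_{i}})^{\perp}$ is the kernel of the linear map $\langle\mu\rangle_{\RR}\to\RR^{l}$ whose $i$-th coordinate is $\overline{\vector_{\alpha_{i}}}$; since $\overline{\vector_{\alpha_{1}}},\dots,\overline{\vector_{\alpha_{l}}}$ are linearly independent in the dual space $\overline{M}_{\RR}=(\langle\mu\rangle_{\RR})^{*}$, this map is surjective, so its kernel has dimension $\dim\mu-l$. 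Hence $\dim\tau=\dim\langle\tau\rangle_{\RR}\le\dim\mu-l$, which is the assertion.

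There is no substantial obstacle here — all the real content is in the preceding theorem. The only points needing a little care are the elementary fact that a face of $\sigma$ is a union of cones of the refining fan $\Delta$ (hence $\tau\subset\mu$), and the identification of $\overline{\vector_{\alpha_{i}}}$ with the restriction of $\vector_{\alpha_{i}}$ to $\langle\mu\rangle_{\RR}$ under the induced perfect pairing; both are standard.
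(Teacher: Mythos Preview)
Your proof is correct and follows essentially the same route as the paper: both use the preceding theorem to place $\langle\tau\rangle_{\RR}$ inside $\langle\mu\rangle_{\RR}\cap\bigcap_{i}(\vector_{\alpha_{i}})^{\perp}$ and then do a linear-algebra dimension count. The only cosmetic difference is that the paper passes to orthogonal complements in $M_{\RR}$ (obtaining $\tau^{\perp}\supset\mu^{\perp}+\sum_{i}\RR\vector_{\alpha_{i}}$ and reading off the dimension from there), whereas you use the duality $\overline{M}_{\RR}\cong(\langle\mu\rangle_{\RR})^{*}$ and rank--nullity directly; your version is also slightly more explicit in justifying $\tau\subset\mu$.
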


\begin{proof}
Let $\tau\in\Delta$ be the minimal cone containing $w$. From the
above theorem, we have
\[
\langle\tau\rangle_{\RR}\subset\langle\mu\rangle_{\RR}\cap\bigcap_{i=1}^{l}(\vector_{\alpha_{i}})^{\perp}=(\mu^{\perp})^{\perp}\cap\bigcap_{i=1}^{l}(\vector_{\alpha_{i}})^{\perp}.
\]
Taking orthogonal subspaces, we get
\[
\tau^{\perp}\supset\mu^{\perp}+\sum_{i=1}^{l}\RR\vector_{\alpha_{i}}.
\]
From the assumption, $\dim\tau^{\perp}\ge\dim\mu^{\perp}+l$ and hence
\[
\dim\tau=\dim\langle\tau\rangle_{\RR}\le d-(\dim\mu^{\perp}+l)=\dim\mu-l.
\]
\end{proof}

\section{Main results\label{sec:Main-results}}

In this section, we prove our main result  and draw a few conclusions from it. 

\begin{lem}
\label{lem:exist min}We follow the notation in Section \ref{sec:quot-space}.
Let $\overline{P}\in\overline{\Lambda_{(=1)}}$ be a vertex and let
$\overline{L}\subsetneq\overline{M}_{\RR}$ be a proper subspace with
$(\overline{L}+\overline{P})\cap(\overline{\Lambda_{(=1)}})^{\circ}=\emptyset$.
Let $P$ be a vertex of $\Lambda_{(=1)}$ mapping to $\overline{P}$
and let $L\subset M_{\RR}$ be the preimage of $\overline{L}$. There
exists the minimum positive real number $c$ such that there exists
an integral arrow $\alpha$ in $\Lambda_{(=c)}$ with $\tl_{\alpha}=cP$
and $\vector_{\alpha}\notin L$. 
\end{lem}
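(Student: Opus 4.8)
The plan is to recast the statement as a question about lattice points in dilates of the polyhedron $\Lambda_{(=1)}$, reduce it to the bounded situation by passing to the quotient $\overline{M}_\RR$ (where $\overline{\Lambda_{(=1)}}$ is an honest polytope by Lemma~\ref{lem:bounded}), and finish by an elementary compactness argument. First I would reformulate: an integral arrow $\alpha$ in $\Lambda_{(=c)}$ with $\tl_\alpha=cP$ is exactly a vector $m:=\vector_\alpha\in M$ with $w(m)=0$ (so that the head $cP+m$ again lies on the hyperplane $\{w=c\}$) and $cP+m\in A_\RR$ (so that the head lies in $\Lambda_{(=c)}$), and the condition $\vector_\alpha\notin L$ reads $m\notin L$, which in particular forces $m\neq0$ since $0\in L$. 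Since $\Lambda_{(=c)}=c\,\Lambda_{(=1)}$, what must be shown is precisely that the set
\[
  S:=\bigl\{\,c\in\RR_{>0}\ \bigm|\ \exists\,m\in(w^{\perp}\cap M)\setminus L\text{ with }P+c^{-1}m\in\Lambda_{(=1)}\,\bigr\}
\]
is nonempty and attains its infimum.

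Next I would pass to the quotient. Let $\Lambda'\subset\overline{M}$ be the image of $w^{\perp}\cap M$; since $\mu^{\perp}\subset w^{\perp}$, this is a full-rank lattice in the linear hyperplane $\overline{w^{\perp}}=\{w=0\}\subset\overline{M}_\RR$, and $\overline{\Lambda_{(=1)}}$ is a full-dimensional polytope in the affine hyperplane $\{w=1\}\subset\overline{M}_\RR$ having $\overline{P}$ as a vertex. I claim $S$ coincides with the analogous set $\overline{S}$ obtained by replacing $\Lambda_{(=1)},P,w^{\perp}\cap M,L$ with $\overline{\Lambda_{(=1)}},\overline{P},\Lambda',\overline{L}$. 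The inclusion $S\subseteq\overline{S}$ is immediate from $\mu^{\perp}\subseteq L$. For the reverse, given $c\in\overline{S}$ witnessed by some $\overline{m}$, lift $\overline m$ to $m_{0}\in w^{\perp}\cap M$; then $cP+m_{0}$ lies in the preimage of $\overline{\Lambda_{(=c)}}$, which equals $\Lambda_{(=c)}+\mu^{\perp}$, say $cP+m_{0}=\lambda_{0}+t_{0}$ with $\lambda_{0}\in\Lambda_{(=c)}$ and $t_{0}\in\mu^{\perp}$. Since $-\mu^{*}$ is a full-dimensional rational cone inside $\mu^{\perp}=\langle\mu^{*}\rangle_\RR$ and $\Lambda_{(=c)}+\mu^{*}=\Lambda_{(=c)}$ (the recession cone of $\Lambda_{(=c)}$ is $\mu^{*}$ by Lemma~\ref{lem:chara-cone}), the translate $t_{0}-\mu^{*}$ contains a point $t$ of the full-rank lattice $\mu^{\perp}\cap M$; then $m:=m_{0}-t\in w^{\perp}\cap M$ has the same image $\overline m\notin\overline L$ (hence $m\notin L$) and $cP+m=\lambda_{0}+(t_{0}-t)\in\Lambda_{(=c)}$, so $c\in S$.

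It then remains to treat $\overline{S}$, where boundedness of $\overline{\Lambda_{(=1)}}$ does the work. For $\overline{m}\in\Lambda'\setminus\overline{L}$ (so $\overline m\neq0$) put $\beta(\overline{m}):=\sup\{\,s\ge0\mid\overline{P}+s\overline{m}\in\overline{\Lambda_{(=1)}}\,\}$; compactness gives $\beta(\overline m)\le\operatorname{diam}(\overline{\Lambda_{(=1)}})/\lVert\overline m\rVert<\infty$ and makes $\{s\ge0\mid\overline{P}+s\overline m\in\overline{\Lambda_{(=1)}}\}$ the closed interval $[0,\beta(\overline m)]$. Hence $c\in\overline{S}$ iff $\beta(\overline m)\ge 1/c$ for some such $\overline m$, so $\overline{S}=\bigcup_{\overline m}[\,1/\beta(\overline m),\infty)$ and $\inf\overline{S}=1/\sup_{\overline m}\beta(\overline m)$. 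The supremum is positive: $\beta(\overline m)>0$ exactly when $\overline m$ lies in the (direction) tangent cone $T$ of $\overline{\Lambda_{(=1)}}$ at $\overline P$, which is full-dimensional in $\{w=0\}$, and the hypothesis $(\overline L+\overline P)\cap(\overline{\Lambda_{(=1)}})^{\circ}=\emptyset$ forces $\{w=0\}\not\subseteq\overline L$ (otherwise $\overline L+\overline P\supseteq\{w=1\}\supseteq(\overline{\Lambda_{(=1)}})^{\circ}\neq\emptyset$), so $T^{\circ}\setminus\overline L$ is a nonempty open cone in $\{w=0\}$ and therefore contains a point of the full-rank lattice $\Lambda'$. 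The supremum is attained: along a maximizing sequence $\overline m_{n}$ one eventually has $\beta(\overline m_{n})\ge\tfrac12\sup\beta>0$, whence $\lVert\overline m_{n}\rVert\le 2\operatorname{diam}(\overline{\Lambda_{(=1)}})/\sup\beta$, so only finitely many $\overline m$ occur and the supremum is a maximum. This gives $\min S=\min\overline{S}=1/\max_{\overline m}\beta(\overline m)$, as required.

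The step I expect to be the main obstacle is the passage to the quotient — concretely, checking that an integral arrow found upstairs in $\overline{M}_\RR$ really lifts to an \emph{integral} arrow downstairs with tail $cP$, head still inside the \emph{unbounded} polyhedron $\Lambda_{(=c)}$, and vector still outside $L$; this is exactly what the little argument about lattice points in the translate $t_{0}-\mu^{*}$ of the full-dimensional cone $-\mu^{*}$ accomplishes. Once one is safely in the quotient, where $\overline{\Lambda_{(=1)}}$ is compact, the finiteness, positivity and attainment of the relevant supremum are routine, and the hypothesis is used only to guarantee that the tangent cone at $\overline P$ is not absorbed into $\overline L$.
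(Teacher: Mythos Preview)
Your proof is correct and follows essentially the same strategy as the paper: reduce to the bounded situation in the quotient $\overline{M}_\RR$ (via a lifting argument that uses the recession cone $\mu^*$ to adjust a preimage back into $\Lambda_{(=c)}$), and then use compactness of $\overline{\Lambda_{(=1)}}$ to get existence and attainment of the minimum. The paper packages this slightly differently---it first treats the case $w\in\sigma^\circ$ directly (where $\Lambda_{(=1)}$ is already a polytope) and then reduces the general case to it via an explicit Claim, whereas you go straight to the quotient and phrase the compactness step in terms of $\sup_{\overline m}\beta(\overline m)$---but the substance is the same.
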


\begin{proof}
We first consider the case where $w\in\sigma^{\circ}$. From Corollary
\ref{cor:polytope equiv conditions}, $\Lambda_{(=1)}$ is a polytope.
We have $\overline{M}_{\RR}=M_{\RR}$, $\overline{\Lambda_{(=1)}}=\Lambda_{(=1)}$,
$\overline{P}=P$ and $\overline{L}=L$. For $c_{0}\gg0$, there exists
an integral arrow $\alpha$ in $\Lambda_{(=c_{0})}$ with $\tl_{\alpha}=c_{0}P$
and $\vector_{\alpha}\notin L$. Let $e$ be the diameter of $\Lambda_{(=c_{0})}$.
To find the minimum $c$ as in the lemma, we only need to consider
integral arrows $\alpha$ with $|\bv_{\alpha}|\le e$. Let $v_{1},\dots,v_{n}\in(w^{\perp}\cap M)\setminus L$
be all the integral $w$-orthogonal arrows which are not contained
in $L$ and have norms at most $e$. Let $J\subset\RR_{\ge0}P$ be
the line segment connecting $0$ and $c_{0}P$. Now, the compact set
\[
J':=\bigcup_{i=1}^{n}\left(J\cap(A_{\RR}-v_{i})\right)
\]
is the set of points $cP\in J$, $c\le c_{0}$ such that there exists
an integral arrow $\alpha$ in $\Lambda_{(=c)}$ with $\tl_{\alpha}=cP$
and $\vector_{\alpha}\notin L$. Thus, the desired minimum value $c$
is the minimum value of the function $w|_{J'}\colon J'\to\RR$. Note
that this minimum value is positive, since for every $i$, we have
$v_{i}\notin A_{\RR}$ from construction. If the minimum is attained
on $J\cap(\Lambda_{(=c_{0})}-v_{i_{0}})$, then there exists an integral
arrow $\alpha$ in $\Lambda_{(=c)}$ with $\tl_{\alpha}=cP$ and $\vector_{\alpha}=v_{i_{0}}$. 

Next, we consider the general case. We begin with the following claim:
\begin{claim*}
We define integral arrows in $\overline{M}_{\RR}$ by using the lattice
$\overline{M}=M/(\mu^{\perp}\cap M)$. Then, the following are equivalent:
\end{claim*}
\begin{enumerate}
\item There exists an integral arrow $\alpha$ in $\Lambda_{(=c)}$ such
that $\tl_{\alpha}=cP$ and $\vector_{\alpha}\notin L$.
\item There exists an integral arrow $\overline{\alpha}$ in $\overline{\Lambda_{(=c)}}$
such that $\tl_{\overline{\alpha}}=c\overline{P}$ and $\vector_{\overline{\alpha}}\notin\overline{L}$. 
\end{enumerate}
\begin{proof}[Proof of Claim]
(1) $\Rightarrow$ (2): The image $\overline{\alpha}$ of $\alpha$
in $\overline{\Lambda_{(=c)}}$ satisfies $\tl_{\overline{\alpha}}=c\overline{P}$
and $\vector_{\overline{\alpha}}\notin\overline{L}$. 

(2) $\Rightarrow$ (1): Let $K\subset M_{\RR}$ be a polytope such
that $\Lambda_{(=c)}=K+\mu^{*}$. Let $H\in K$ be a lift of $\bh_{\overline{\alpha}}$.
There exists $a\in\mu^{\perp}$ such that $H+a$ is $cP$-integral.
Moreover, there exists $b\in\mu^{*}$ such that $a+b\in\mu^{*}$.
Then, 
\[
H':=H+a+b\in K+\mu^{*}=\Lambda_{(=c)}.
\]
The arrow $\alpha:=(H',cP)$ is an integral arrow in $\Lambda_{(=c)}$
which lifts $\overline{\alpha}$. From the construction, its associated
vector is not contained in $L$. 
\end{proof}
We go back to the proof of the lemma. Note that the polytope $\overline{\Lambda_{(=1)}}$
is obtained by cutting the image $\overline{A_{\RR}}\subset\overline{M}_{\RR}$
of $A_{\RR}$ by the hyperplane $\{w=1\}$. We see that $w$ is in
the interior of the dual cone $(\overline{A_{\RR}})^{\vee}\subset\langle\mu\rangle_{\RR}$.
The minimal face of $(\overline{A_{\RR}})^{\vee}$ containing $w$
is $(\overline{A_{\RR}})^{\vee}$ itself. From the special case of
the lemma which was discussed above, there exists the minimum $c>0$
satisfying condition (2). From the claim, there exists also the minimum
$c>0$ satisfying condition (1), which completes the proof. 
\end{proof}
\begin{lem}
\label{lem:no-pt-below}We keep the notation. Suppose that $(\Lambda_{(=1)})^{\circ}\cap M$
is not empty. Let $c$ be the minimum number as in Lemma \ref{lem:exist min}.
Then, there is no $cP$-integral point in $\Lambda_{(<c)}$. In particular,
an integral arrow $\beta$ in $\Lambda_{(=c)}$ with $\tl_{\beta}=cP$
is $w$-critical.
\end{lem}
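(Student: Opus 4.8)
The statement has two parts: that $\Lambda_{(<c)}$ contains no $cP$-integral point (call this $(\star)$), and that, granting $(\star)$, any integral arrow $\beta$ in $\Lambda_{(=c)}$ with $\tl_{\beta}=cP$ is $w$-critical. For the second part, conditions (1) and (2) of Definition~\ref{def:critical arrows} hold by the choice of $\beta$ and (3) is $(\star)$, so only condition (4) — the existence of a suitable tie-breaker — remains, and I postpone it to the end. The plan for $(\star)$ is a proof by contradiction built on a translation trick: suppose $z\in A_{\RR}$ with $w(z)<c$ and $z-cP\in M$. Since $w$ pairs $M$ into $\ZZ$ and $w(z)<c$, the number $k:=c-w(z)$ is a \emph{positive integer}; fix an interior lattice point $q\in(\Lambda_{(=1)})^{\circ}\cap M$ (nonempty by hypothesis) and translate $z$ upward by $kq$ so that it lands exactly on the slice at height $c$.

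Put $z':=z+kq$. Because $w(q)=1>0$, an $M_{\RR}$-neighbourhood of $q$ is the cone over a small ball of $\{w=1\}$ around $q$, hence $q\in(A_{\RR})^{\circ}$; together with $z\in A_{\RR}$ and $(A_{\RR})^{\circ}+A_{\RR}\subset(A_{\RR})^{\circ}$ this gives $z'\in(A_{\RR})^{\circ}$, so $z'$ is in the relative interior of $\Lambda_{(=c)}$ and is still $cP$-integral, with displacement $v':=z'-cP=(z-cP)+kq\in M\cap w^{\perp}$. Now I split into two cases, each impossible. \emph{Case $v'\notin L$:} then $cP+v'=z'\in A_{\RR}$ with $v'\in(w^{\perp}\cap M)\setminus L$, so by the minimality of $c$ in Lemma~\ref{lem:exist min} no such vector works below height $c$; in particular $z'-\delta P=(c-\delta)P+v'\notin A_{\RR}$ for all small $\delta>0$, forcing $z'\in\partial A_{\RR}$, contradicting $z'\in(A_{\RR})^{\circ}$. \emph{Case $v'\in L$:} then $z'\in cP+L$; but the hypothesis $(\overline{L}+\overline{P})\cap(\overline{\Lambda_{(=1)}})^{\circ}=\emptyset$ pulls back under $M_{\RR}\to\overline{M}_{\RR}$ to $(L+P)\cap(\Lambda_{(=1)})^{\circ}=\emptyset$ (relative interiors map onto relative interiors), and scaling by $c$ gives $(L+cP)\cap(\Lambda_{(=c)})^{\circ}=\emptyset$, contradicting that $z'$ lies there. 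Hence no such $z$ exists, proving $(\star)$.

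For condition (4) it remains to build a tie-breaker $b$ of $w$ for which $cP$ is the unique minimiser of $b|_{\Lambda_{(=c)}}$. Since $A_{\RR}$ is a cone, $\Lambda_{(=c)}=c\,\Lambda_{(=1)}$, so $cP$ is a vertex of this full-dimensional polyhedron, whose recession cone is $\mu^{*}$ by Lemma~\ref{lem:chara-cone}; thus the set $U$ of $b\in w^{\perpinner}$ with $b|_{\Lambda_{(=c)}}$ attaining its unique minimum at $cP$ is a nonempty open cone (the interior of the normal cone at $cP$). Any $b\in U$ is strictly positive on $\mu^{*}\setminus\{0\}$ (otherwise a recession ray keeps $b$ constant and the minimum is not unique), and since $\mu^{*}=\sigma^{\vee}\cap\mu^{\perp}=(\sigma+\langle\mu\rangle_{\RR})^{\vee}$ this says exactly that $b$ lies in the interior of $\sigma+\langle\mu\rangle_{\RR}$; one checks that then $w+\varepsilon b\in\sigma^{\circ}$ for small $\varepsilon>0$. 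Choosing $b\in U$ off the finitely many hyperplanes $m^{\perp}$, $m\in M_{\le D}$ (possible as $U$ is open), makes $w+\varepsilon b$ lie in a single chamber of $\Delta^{*}$ whose closure contains $w$, so $b$ is a tie-breaker and $\beta$ is $(w,b)$-critical. The main obstacle is $(\star)$: the clean idea is the dichotomy after translating $z$ into the relative interior of the slice — the displacement is forced either out of $L$, killing the minimality of $c$, or into $L$, killing the $L$-hypothesis — and the care lies in the setup (integrality of $k$, the interior/boundary relations between $A_{\RR}$ and its slices, and the pullback of the $L$-condition).
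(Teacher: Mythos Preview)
Your proof is correct and follows essentially the same approach as the paper's. Both arguments translate a hypothetical $cP$-integral point of $\Lambda_{(<c)}$ up to the relative interior of $\Lambda_{(=c)}$ by adding a positive-integer multiple of an interior lattice point $q\in(\Lambda_{(=1)})^{\circ}\cap M$, then derive a contradiction from the minimality of $c$ together with the hypothesis $(\overline{L}+\overline{P})\cap(\overline{\Lambda_{(=1)}})^{\circ}=\emptyset$; you present this as a dichotomy on whether $v'\in L$, whereas the paper argues sequentially (first using the $L$-hypothesis to force $R-cP\notin L$, then shifting the arrow down by $\varepsilon cP$), but the substance is identical. Your treatment of the tie-breaker is more detailed than the paper's (which simply asserts the existence of a tie-breaker $b$ with $b|_{\Lambda_{(=1)}}$ uniquely minimised at $P$), and your justification via the normal cone at the vertex and the recession-cone positivity is a welcome addition.
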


\begin{proof}
The proof is by contradiction. We assume that there was an $cP$-integral
point $Q\in\Lambda_{(<c)}$ and derive a contradiction. Since $Q$
is $cP$-integral, $w(Q)=w(cP)-n$ for some positive integer $n$.
For $u\in(\Lambda_{(=1)})^{\circ}\cap M$, we have $R:=Q+nu\in(\Lambda_{(=c)})^{\circ}$.
Since $(L+P)\cap(\Lambda_{(=1)})^{\circ}=\emptyset$, $(L+cP)\cap(\Lambda_{(=c)})^{\circ}=\emptyset$.
This shows that $R-cP\notin L$. Since $R$ is in the interior of
$A_{\RR}$, for $0<\varepsilon\ll1$, 
\[
(R-\varepsilon cP,cP-\varepsilon cP)=(R-\varepsilon cP,(1-\varepsilon)cP)
\]
is an arrow in $A_{\RR}$. Its associated vector is 
\[
(R-\varepsilon cP)-(cP-\varepsilon cP)=R-cP\notin L.
\]
Since $(1-\varepsilon)c<c$, this contradicts the choice of $c$.

If $\beta$ is an integral arrow with $\tl_{\beta}=cP$ and if $b$
is a tie-breaker such that $b|_{\Lambda_{(=1)}}$takes the minimum
value only at $P$, then $\beta$ is $w$-critical.
\end{proof}
\begin{thm}
\label{thm: inner lattice pt}If $(\Lambda_{(=1)})^{\circ}\cap M\ne\emptyset$,
then the ray $\RR_{\ge0}w$ belongs to $\Delta$.
\end{thm}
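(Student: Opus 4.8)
The goal is to show that the ray $\RR_{\ge0}w$ is a ray of $\Delta$, i.e.\ that the minimal cone $\tau\in\Delta$ containing $w$ is exactly the one-dimensional cone $\RR_{\ge0}w$. The plan is to produce enough $w$-critical arrows whose associated vectors, viewed in the quotient space $\overline{M}_\RR = M_\RR/\mu^\perp$, are linearly independent, and then invoke Corollary \ref{cor:lin-indep-ray} to force $\dim\tau \le \dim\mu - l$ with $l$ large enough to conclude $\dim\tau\le 1$; since $w\ne 0$ lies in $\tau^\circ$ this gives $\tau = \RR_{\ge0}w$.

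Concretely, I would work in the polytope $\overline{\Lambda_{(=1)}}$ (a genuine polytope by Lemma \ref{lem:bounded}), which has dimension $\dim\overline{M}_\RR - 1 = \dim\mu - 1$. The hypothesis $(\Lambda_{(=1)})^\circ\cap M\ne\emptyset$ is exactly what lets Lemma \ref{lem:no-pt-below} turn the ``minimal'' arrows furnished by Lemma \ref{lem:exist min} into $w$-critical arrows. So I would pick a vertex $\overline{P}$ of $\overline{\Lambda_{(=1)}}$, lift it to a vertex $P$ of $\Lambda_{(=1)}$ via Lemma \ref{lem:lift vertex}, and build a flag of subspaces $\overline{L}_0 = \langle\overline{P}\rangle_\RR \subsetneq \overline{L}_1 \subsetneq \cdots$ where at each stage $\overline{L}_j$ is a proper subspace of $\overline{M}_\RR$ missing the interior of $\overline{\Lambda_{(=1)}}$ after translation by $\overline{P}$; Lemma \ref{lem:exist min} then produces, for $L = L_j$ the preimage, a minimal $c_j>0$ and an integral arrow $\beta_j$ in $\Lambda_{(=c_j)}$ with $\tl_{\beta_j} = c_j P$ and $\vector_{\beta_j}\notin L_j$. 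By Lemma \ref{lem:no-pt-below}, with the tie-breaker chosen so that $b|_{\Lambda_{(=1)}}$ is minimized only at $P$, each $\beta_j$ is $w$-critical. Then I enlarge $\overline{L}_{j+1} := \overline{L}_j + \RR\,\overline{\vector_{\beta_j}}$ and repeat. The critical arrows have $w$-orthogonal associated vectors, so $\overline{\vector_{\beta_j}}$ lies in $\overline{w^\perp}$, a hyperplane of $\overline{M}_\RR$ of dimension $\dim\mu - 1$; since $\overline{P}\notin\overline{w^\perp}$ (as $w(P)=1\ne0$), the subspaces $\overline{L}_j$ stay proper as long as $\overline{L}_j\cap\overline{w^\perp}$ is a proper subspace of $\overline{w^\perp}$, which holds until we have accumulated $\dim\mu-1$ independent vectors $\overline{\vector_{\beta_j}}$.

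This way I obtain $w$-critical arrows $\beta_1,\dots,\beta_{\dim\mu-1}$ with $\overline{\vector_{\beta_1}},\dots,\overline{\vector_{\beta_{\dim\mu-1}}}$ linearly independent in $\overline{M}_\RR$. Applying Corollary \ref{cor:lin-indep-ray} with $l = \dim\mu - 1$ gives $\dim\tau \le \dim\mu - (\dim\mu-1) = 1$, and since $0\ne w\in\tau^\circ$ we conclude $\tau = \RR_{\ge0}w\in\Delta$, which is the assertion. One should also dispatch the trivial case $\dim\mu = 1$ separately (there $w$ already spans a ray of $\sigma$, hence of every subdivision, so $\RR_{\ge0}w\in\Delta$ automatically and no arrows are needed).

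The main obstacle I anticipate is the bookkeeping in the inductive construction of the flag: I need to check at each step that the translated subspace $\overline{L}_j + \overline{P}$ really does miss the relative interior $(\overline{\Lambda_{(=1)}})^\circ$, which is the standing hypothesis of Lemma \ref{lem:exist min}. The clean way to ensure this is to keep $\overline{L}_j$ inside the hyperplane spanned by the supporting facet(s) of $\overline{\Lambda_{(=1)}}$ through $\overline{P}$ — i.e.\ work with the tangent directions of faces containing $P$ — or, more simply, to note that $\overline{L}_j$ is spanned by $\overline{P}$ together with vectors $\overline{\vector_{\beta_i}}\in\overline{w^\perp}$, so $\overline{L}_j + \overline{P} = \overline{P} + (\overline{L}_j\cap\overline{w^\perp})$ meets $\{w=1\}$ in an affine subspace of dimension $\le \dim\mu-2$, which cannot contain the full-dimensional (within $\{w=1\}$) set $(\overline{\Lambda_{(=1)}})^\circ$ — indeed it cannot even meet it unless it meets it in positive codimension, and a careful choice of the next independent direction avoids that. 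Making this last point rigorous — that a proper affine subspace through the vertex $\overline{P}$ can be chosen to avoid the open polytope entirely — is the only genuinely delicate part; everything else is a direct chaining of the lemmas already established.
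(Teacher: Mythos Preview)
Your overall strategy matches the paper's: reduce to Corollary~\ref{cor:lin-indep-ray} by producing $\dim\mu-1$ $w$-critical arrows whose images in $\overline{M}_\RR$ are linearly independent, using Lemmas~\ref{lem:exist min} and~\ref{lem:no-pt-below}. However, your inductive construction has a genuine gap at exactly the spot you flag as ``delicate,'' and neither of your proposed fixes closes it.

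The problem is that you fix the vertex $\overline{P}$ \emph{first} and then try to grow the flag $\overline{L}_0\subsetneq\overline{L}_1\subsetneq\cdots$ while maintaining $(\overline{L}_j+\overline{P})\cap(\overline{\Lambda_{(=1)}})^\circ=\emptyset$. But Lemma~\ref{lem:exist min} gives no control over \emph{which} direction $\overline{\vector_{\beta_j}}$ points: the head of $\beta_j$ may lie in the relative interior of $\Lambda_{(=c_j)}$, so the line $\overline{P}+\RR\,\overline{\vector_{\beta_j}}$ can pass straight through $(\overline{\Lambda_{(=1)}})^\circ$, and the hypothesis of Lemma~\ref{lem:exist min} fails at the next step. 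Your dimension count only shows that $\overline{L}_j+\overline{P}$ cannot \emph{contain} the open polytope, not that it avoids it; and you cannot ``keep $\overline{L}_j$ inside a supporting hyperplane at $\overline{P}$'' because you do not get to choose $\vector_{\beta_j}$.

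The paper avoids this by arguing by contradiction rather than constructively. Suppose the images of \emph{all} $w$-critical arrows lie in a proper subspace $\overline{L}\subsetneq\overline{w^\perp}$. Then one chooses the vertex \emph{after} $\overline{L}$ is given: since $\overline{L}$ is proper in the direction space of $\{w=1\}$ and $\overline{\Lambda_{(=1)}}$ is a polytope (Lemma~\ref{lem:bounded}), a nonzero linear functional vanishing on $\overline{L}$ attains its minimum on $\overline{\Lambda_{(=1)}}$ at some vertex $\overline{P}$, whence $\overline{L}+\overline{P}$ lies in a supporting hyperplane and misses the interior. A single application of Lemmas~\ref{lem:exist min} and~\ref{lem:no-pt-below} then produces a $w$-critical arrow with $\overline{\vector_\alpha}\notin\overline{L}$, a contradiction. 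Your induction can be repaired by letting the vertex vary with $j$, choosing $\overline{P}_j$ adapted to $\overline{L}_j$ as above---but at that point it is just the contradiction argument unrolled.
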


\begin{proof}
Let $l:=\dim\mu$. From Corollary \ref{cor:lin-indep-ray}, it suffices
to show that there exist $w$-critical arrows $\alpha_{1},\dots,\alpha_{l-1}$
in $A_{\RR}$ such that $\overline{\vector_{\alpha_{1}}},\dots,\overline{\vector_{\alpha_{l-1}}}$
are linearly independent vectors in $\overline{M}_{\RR}$. To prove
this by contradiction, we suppose that there do not exist such arrows
$\alpha_{1},\dots,\alpha_{l-1}$. Then, there exists a proper subspace
$\overline{L}\subsetneq\overline{w^{\perp}}$ that $\overline{\vector_{\alpha}}\in\overline{L}$
for every $w$-critical arrow $\alpha$ in $A_{\RR}$. From Lemma
\ref{lem:bounded}, there exists a vertex $\overline{P}\in\overline{\Lambda_{(=1)}}$
such that $(\overline{L}+\overline{P})\cap(\overline{\Lambda_{(=1)}})^{\circ}=\emptyset$.
From Lemma \ref{lem:lift vertex}, there exists a vertex $P\in\Lambda_{(=1)}$
which is a lift of $\overline{P}$. Let $L\subset M_{\RR}$ be the
preimage of $\overline{L}$. From Lemma \ref{lem:no-pt-below}, there
exists a $w$-critical arrow $\alpha$ with $\vector_{\alpha}\notin L$.
This contradicts the construction of $\overline{L}$. 
\end{proof}
\begin{lem}
\label{lem:affine-intersect-rays}Suppose that there exists $w_{2},\dots,w_{d}\in B$
such that $w=w_{1},w_{2},\dots,w_{d}$ form a basis of $N$ and the
affine hyperplane spanned by $w_{1},\dots,w_{d}$ intersects with
every ray of $\sigma$. Then, $(\Lambda_{(=1)})^{\circ}\cap M\ne\emptyset$. 
\end{lem}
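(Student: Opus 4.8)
The plan is to exhibit an explicit lattice point lying in $(\Lambda_{(=1)})^{\circ}$, namely the ``apex'' of the cone $A_{\RR}$ cut out by the dual basis. First I would let $a_{1},\dots,a_{d}\in M$ be the $\ZZ$-basis of $M$ dual to the given basis $w=w_{1},w_{2},\dots,w_{d}$ of $N$, so that $(a_{i},w_{j})=\delta_{ij}$, and set $a:=a_{1}+\dots+a_{d}\in M$. Then $(a,w_{j})=1$ for every $j$; in particular $(a,w)=(a,w_{1})=1$, so $a\in\{w=1\}$. Moreover $w_{1},\dots,w_{d}$, being $d$ linearly independent vectors of $N_{\RR}$, are automatically affinely independent, so the affine hyperplane they span is exactly $H:=\{x\in N_{\RR}\mid(a,x)=1\}$, which indeed contains all the $w_{j}$ and is a genuine hyperplane since $a\ne 0$.

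Next I would exploit the hypothesis that $H$ meets every ray of $\sigma$. If $p\in N$ is a primitive generator of a ray of $\sigma$, then $tp\in H$ for some $t\ge 0$, i.e.\ $t(a,p)=1$; since the right-hand side is nonzero this forces $t>0$ and hence $(a,p)=1/t>0$. As every element of $\sigma\setminus\{0\}$ is a nonnegative combination of primitive ray generators of $\sigma$ with at least one positive coefficient, it follows that $(a,u)>0$ for all $u\in\sigma\setminus\{0\}$, which is precisely the condition that $a$ lie in the interior $(\sigma^{\vee})^{\circ}=(A_{\RR})^{\circ}$.

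Finally, since $(A_{\RR})^{\circ}$ is open in $M_{\RR}$ and $a\in(A_{\RR})^{\circ}\cap\{w=1\}$, a whole relative neighbourhood of $a$ inside the hyperplane $\{w=1\}$ is contained in $A_{\RR}\cap\{w=1\}=\Lambda_{(=1)}$, so $a$ lies in the (relative) interior $(\Lambda_{(=1)})^{\circ}$; as $a\in M$, this gives $(\Lambda_{(=1)})^{\circ}\cap M\ne\emptyset$. I do not expect a genuine obstacle here: the only steps needing a word of justification are that $d$ linearly independent vectors are affinely independent (so that the spanned affine hyperplane is the $H$ described above) and the standard description of the interior of the dual cone $\sigma^{\vee}$ of the full-dimensional cone $\sigma$ in terms of the primitive ray generators of $\sigma$; everything else is immediate from the definitions.
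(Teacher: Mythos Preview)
Your proof is correct and follows essentially the same approach as the paper's: both take the dual basis $a_1,\dots,a_d$, set $a=\sum_i a_i$, identify the affine hyperplane spanned by the $w_j$ as $\{(a,\cdot)=1\}$, use the intersection hypothesis to conclude $a\in(A_{\RR})^{\circ}$, and finish by noting $(a,w)=1$. You have simply spelled out in more detail the step showing $(a,p)>0$ for each primitive ray generator $p$ and the passage from $a\in(A_{\RR})^{\circ}\cap\{w=1\}$ to $a\in(\Lambda_{(=1)})^{\circ}$, both of which the paper leaves implicit.
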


\begin{proof}
Let $a_{1},\dots,a_{d}\in M$ be the dual basis of $w_{1},\dots,w_{d}$.
Then, the affine hyperplane spanned by $w_{1},\dots,w_{d}$ is defined
by $\{\sum_{i}a_{i}=1\}$. The assumption shows that $\{\sum_{i}a_{i}=0\}$
is the supporting hyperplane of the origin as a vertex of $\sigma$.
This shows that $\sum_{i}a_{i}\in(A_{\RR})^{\circ}$. Since $(\sum_{i}a_{i},w)=1$,
we have $\sum_{i}a_{i}\in(\Lambda_{(=1)})^{\circ}$. 
\end{proof}
\begin{lem}
\label{lem:aff hyper}Let $\tau\subset N_{\RR}$ be a rational simplicial
nondegenerate cone and let $w_{1},\dots,w_{d}$ be the primitive vectors
of rays of $\tau$. Let $\psi_{\tau}\colon N_{\RR}\to\RR$ be the
unique linear function such that for every $i$, $\psi_{\tau}(w_{i})=1$.
Then, the following are equivalent:
\begin{enumerate}
\item The affine hyperplane spanned by $w_{1},\dots,w_{d}$ intersects with
every ray of $\sigma$.
\item The hyperplane $\psi_{\tau}=0$ is a supporting hyperplane of the
vertex of $\sigma$.
\end{enumerate}
\end{lem}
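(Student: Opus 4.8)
The plan is to translate both conditions into a single statement about the sign of the linear functional $\psi_{\tau}$ on the cone $\sigma$, namely ``$\psi_{\tau}>0$ on $\sigma\setminus\{0\}$'', and then check that each of (1) and (2) is equivalent to it.

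First I would identify the affine hyperplane spanned by $w_{1},\dots,w_{d}$. Since $\tau$ is simplicial and nondegenerate, the $w_{i}$ are linearly independent, hence a basis of $N_{\RR}$, hence affinely independent; so they span a unique affine hyperplane, and since $\psi_{\tau}(w_{i})=1$ for all $i$ this hyperplane is precisely $\{v\in N_{\RR}\mid\psi_{\tau}(v)=1\}$. Next, for a ray $\RR_{\ge0}v$ of $\sigma$ with $v\neq0$, the intersection $\RR_{\ge0}v\cap\{\psi_{\tau}=1\}$ is nonempty exactly when $\psi_{\tau}(tv)=1$ has a solution $t\ge0$, i.e.\ exactly when $\psi_{\tau}(v)>0$ (in which case $t=\psi_{\tau}(v)^{-1}$). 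Thus condition (1) is equivalent to: $\psi_{\tau}(v)>0$ for every primitive generator $v$ of a ray of $\sigma$.

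I would then upgrade this using that $\sigma$ is strictly convex. As a pointed rational polyhedral cone, $\sigma$ is the conical hull of its finitely many rays, so every $x\in\sigma$ has the form $\sum_{i}t_{i}v_{i}$ with $t_{i}\ge0$ and $v_{i}$ the primitive ray generators; if $\psi_{\tau}(v_{i})>0$ for all $i$, then $\psi_{\tau}(x)=\sum_{i}t_{i}\psi_{\tau}(v_{i})\ge0$, with equality forcing all $t_{i}=0$ and hence $x=0$ (here one uses that in a pointed cone $\sum t_{i}v_{i}=0$, $t_{i}\ge0$, implies all $t_{i}=0$). Therefore (1) is equivalent to ``$\psi_{\tau}>0$ on $\sigma\setminus\{0\}$''. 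Finally, ``$\psi_{\tau}>0$ on $\sigma\setminus\{0\}$'' says exactly that $\sigma$ lies in the closed halfspace $\{\psi_{\tau}\ge0\}$ and meets $\{\psi_{\tau}=0\}$ only at the vertex, i.e.\ that $\{\psi_{\tau}=0\}$ is a supporting hyperplane of the vertex of $\sigma$, which is (2); this settles (1)$\Rightarrow$(2), and for (2)$\Rightarrow$(1) one observes that a supporting hyperplane through the vertex puts $\sigma$ on one side of $\{\psi_{\tau}=0\}$ and meets it only at $0$, so $\psi_{\tau}$ has a constant nonzero sign on $\sigma\setminus\{0\}$, which is positive because $w_{1}\in\tau\subseteq\sigma$ and $\psi_{\tau}(w_{1})=1$.

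The one step needing genuine care is this last orientation point: a priori the supporting-hyperplane condition in (2) does not say which side of $\{\psi_{\tau}=0\}$ the cone $\sigma$ lies on, so one must invoke $\tau\subseteq\sigma$ (equivalently, that each $w_{i}$ lies in $\sigma$, as is the case in the setting where this lemma is applied) together with $\psi_{\tau}(w_{i})=1>0$ to rule out $\sigma\subseteq\{\psi_{\tau}\le0\}$. Everything else is elementary linear algebra together with the basic structure of pointed polyhedral cones.
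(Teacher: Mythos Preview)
Your argument is correct and follows essentially the same route as the paper: both proofs identify the affine hyperplane as $\{\psi_{\tau}=1\}$ and reduce the equivalence to the single condition ``$\psi_{\tau}>0$ on $\sigma\setminus\{0\}$'', using that $\sigma$ is generated by its rays for one direction and the definition of supporting hyperplane for the other.

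Your treatment is in fact more careful on one point. In the implication (2)$\Rightarrow$(1), the paper simply asserts that condition~(2) gives $\sigma\setminus\{0\}\subset\{\psi_{\tau}>0\}$, without explaining why the sign is positive rather than negative. You correctly observe that this orientation step requires knowing some element of $\sigma$ on which $\psi_{\tau}$ is positive, and you supply this via $\tau\subseteq\sigma$ and $\psi_{\tau}(w_{1})=1$. That hypothesis $\tau\subseteq\sigma$ is not stated in the lemma itself but is satisfied in the only place the lemma is used (Definition~\ref{def:moderate}, where $\tau$ is a cone of a subdivision of $\sigma$); without it the implication (2)$\Rightarrow$(1) would indeed fail, as one sees by replacing $\tau$ with $-\tau$.
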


\begin{proof}
(1) $\Rightarrow$ (2): The affine hyperplane in (1) is defined by
$\psi_{\tau}=1$. Condition (1) shows that the restriction of $\psi_{\tau}$
to each ray of $\sigma$ take positive values except at the origin.
Condition (2) follows from the convexity of $\sigma$. 

(2) $\Rightarrow$ (1): Condition (2) shows that $\sigma\setminus\{0\}\subset\{\psi_{\tau}>0\}$.
This implies that each ray of $\sigma$ has a point where $\psi_{\tau}$
takes value 1. In turn, this means that Condition (1) holds.
\end{proof}
\begin{defn}
\label{def:moderate}Let $\Sigma$ be a subdivision of $\sigma$,
that is, a fan with $|\Sigma|=\sigma$. We say that $\Sigma$ is a
\emph{moderate smooth subdivision }of $\sigma$ if\emph{ }every nondegenerate
cone of $\Sigma$ is smooth and satisfies one of the equivalent conditions
in Lemma \ref{lem:aff hyper}. If this is the case, we call the corresponding
toric resolution $X_{\Sigma}\to X_{\sigma}$ a \emph{moderate toric
resolution.}
\end{defn}

\begin{prop}
Let $\Sigma$ be a moderate smooth subdivision of $\sigma$. Then,
every ray of $\Sigma$ is generated by a minimal element of $B\setminus\{0\}$,
that is, an element of the Hilbert basis of the monoid $B$.
In other words, toric prime divisors on a moderate toric resolution
of $X$ are precisely BGS essential divisors over $X$.
\end{prop}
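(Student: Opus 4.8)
The plan is to reduce the statement to an elementary computation with a $\ZZ$-valued linear function, using condition~(2) of Lemma~\ref{lem:aff hyper} as the only input about moderateness. Let $\rho$ be a ray of $\Sigma$ and let $w\in B\setminus\{0\}$ be its primitive generator; I will show that $w$ is a minimal element of $B\setminus\{0\}$ for the partial order $\le$, which by definition means $w$ lies in the Hilbert basis of $B$. First I would pick a nondegenerate cone $\tau\in\Sigma$ having $\rho$ as a face; such $\tau$ exists because $\Sigma$ subdivides the $d$-dimensional cone $\sigma$, so a neighborhood of a relative-interior point of $\rho$ in $\sigma$ is $d$-dimensional and is covered by finitely many cones of $\Sigma$, forcing one of them to be $d$-dimensional with $\rho$ as a face. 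Since $\Sigma$ is a moderate smooth subdivision, $\tau$ is smooth, so its primitive ray generators $w=w_{1},\dots,w_{d}$ form a $\ZZ$-basis of $N$, and $\tau$ satisfies the equivalent conditions of Lemma~\ref{lem:aff hyper}. Let $\psi_{\tau}$ be the linear function on $N_{\RR}$ with $\psi_{\tau}(w_{i})=1$ for all $i$; because the $w_{i}$ form a $\ZZ$-basis of $N$, the function $\psi_{\tau}$ takes integer values on $N$.

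Next I would invoke condition~(2) of Lemma~\ref{lem:aff hyper}: the hyperplane $\{\psi_{\tau}=0\}$ supports the vertex of $\sigma$, so $\sigma\setminus\{0\}\subset\{\psi_{\tau}>0\}$ (this is exactly what is shown in the $(2)\Rightarrow(1)$ part of that lemma), and hence $\psi_{\tau}(v)\ge 1$ for every $v\in B\setminus\{0\}=(\sigma\cap N)\setminus\{0\}$. Now assume for contradiction that $w$ is not minimal in $B\setminus\{0\}$. Then $w=w'+v$ with $w'\in B\setminus\{0\}$ and $v=w-w'\in\sigma\cap N=B$, with $v\ne0$; applying $\psi_{\tau}$ and using $\psi_{\tau}(w)=\psi_{\tau}(w_{1})=1$ gives $1=\psi_{\tau}(w')+\psi_{\tau}(v)\ge 2$, a contradiction. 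Hence $w$ belongs to the Hilbert basis of $B$, which proves the first assertion.

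Finally, for the reformulation: the toric prime divisors on the moderate toric resolution $X_{\Sigma}\to X_{\sigma}$ are exactly the divisors $E_{\rho}$ attached to the rays $\rho$ of $\Sigma$, and by the first assertion each such $\rho$ is spanned by a Hilbert basis element of $B$, so Proposition~\ref{prop:bouvier-GS} shows every $E_{\rho}$ is BGS essential over $X$; conversely, $X_{\Sigma}\to X_{\sigma}$ is a toric resolution of $X$ (as $\Sigma$ is a smooth subdivision of $\sigma$), so every BGS essential divisor over $X$ appears on $X_{\Sigma}$ as a prime divisor and is therefore some $E_{\rho}$, giving equality of the two collections. I expect the only nonroutine point to be the inequality $\psi_{\tau}\ge 1$ on $B\setminus\{0\}$, which hinges on $\psi_{\tau}$ being $\ZZ$-valued on $N$; this is precisely where the smoothness of $\tau$ is used, and the statement would fail without it.
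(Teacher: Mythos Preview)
Your proof is correct and is in fact cleaner than the paper's argument. You isolate a single smooth maximal cone $\tau\in\Sigma$ containing $\rho$, observe that smoothness makes $\psi_{\tau}$ integer-valued on $N$, and combine this with the supporting-hyperplane condition~(2) of Lemma~\ref{lem:aff hyper} to get $\psi_{\tau}\ge 1$ on $B\setminus\{0\}$; the contradiction $1\ge 2$ from a decomposition $w=w'+v$ then drops out in one line. The paper instead argues globally: it takes a non-minimal $w$ with $w\ge v$, passes to the two-dimensional slice $U=\langle v,w\rangle_{\RR}$, and studies the piecewise linear level set $\psi_{\Sigma}^{-1}(1)\cap U$, showing via the moderateness condition that the segments making up this broken line have negative slope in suitable coordinates, which forces $w_{y}<v_{y}$ in contradiction with $w\in v+\sigma$. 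Your route avoids the slice and the piecewise-linear geometry entirely by exploiting the $\ZZ$-valuedness of $\psi_{\tau}$; the paper's route never explicitly uses this integrality and instead leans on the convex geometry of the whole fan. Both hinge on the same moderateness input, but your argument is shorter and makes the role of smoothness (via the $\ZZ$-basis property of the $w_{i}$) more transparent.
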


\begin{proof}
The second assertion is a direct consequence of the first one and Proposition \ref{prop:bouvier-GS}. We will prove the first assertion.
On the contrary, suppose that there exists a primitive element $w\in B$
which is not minimal in $B\setminus\{0\}$, but generates a ray of
$\Sigma$. Let $\mu$ be the face of $\sigma$ containing $w$ in
its relative interior. The non-minimality of $w$ shows that $\dim\mu\ge2$.
There exists a distinct primitive element $v\in B\setminus\{0\}$
such that $w\in v+\sigma$. We have $v\in\mu$. We take the two-dimensional
subspace $U:=\langle v,w\rangle_{\RR}\subset N_{\RR}$. Let $\rho$
be the ray of the two-dimensional cone $\sigma\cap U$ such that $w\in\RR_{\ge0}v+\rho$.
The ray $\rho$ is contained in the boundary of $\mu$, while $\RR_{\ge0}w$
isn't. In particular, $\RR_{\ge0}w\ne\rho$. By an $\RR$-linear coordinate
change, we identify $U$ with an $xy$-plane $\RR^{2}$ so that the
cone $\RR_{\ge0}v+\rho$ becomes the first quadrant $(\RR_{\ge0})^{2}$.
From the construction, $w_{x}>0$ and $w_{y}\ge v_{y}$, where the
subscripts $x$ and $y$ mean the $x$- and $y$-coordinates of the
point in question. 

Let $\psi_{\Sigma}\colon\sigma=|\Sigma|\to\RR_{\ge0}$ be the piecewise
linear function such that for every nondegenerate cone $\tau\in\Sigma$,
$\psi_{\Sigma}|_{\tau}=\psi_{\tau}$. Let $F:=(\psi_{\Sigma}|_{\sigma\cap U})^{-1}(1)$,
which is the union of finitely many line segments. The two points
$v$ and $w$ are on $F$. Let $v=v_{0},v_{1},\dots,v_{n}=w$ be points
on $F$ which subdivide the path from $v$ to $w$ along $F$ into
the line segments $\overline{v_{i}v_{i+1}}$ for $i=0,1,\dots,n-1$.
Then, the sequence $v_{0,x},v_{1,x},\dots,v_{n,x}$ of $x$-coordinates
is strictly increasing. The moderateness of the subdivision $\Sigma$
implies that the line segments $\overline{v_{i}v_{i+1}}$ have negative
slopes. Thus, the sequence $v_{0,y},v_{1,y},\dots,v_{n,y}$ of $y$-coordinates
is strictly decreasing. In particular, $v_{y}=v_{0,y}>v_{n,y}=w_{y}$.
This contradicts the inequality $w_{y}\ge v_{y}$ mentioned above.
\end{proof}

\begin{example}
\label{exa:no moderate res}From \cite[Example 3.1]{bouvier1995systeme},
there exists a simplicial cone $\sigma$ which does not admit a smooth
subdivision by using only minimal elements of $B\setminus \{0\}$. The above
proposition shows that such a $\sigma$ does not admit any moderate
smooth subdivision.
\end{example}

\begin{cor}
\label{cor:moderate-ess}If $X$ admits a moderate toric resolution,
then every BGS essential divisor appears on $\FBtilde_{(\infty)}(X)$ as a prime divisor. 
\end{cor}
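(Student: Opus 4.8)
The plan is to combine Theorem~\ref{thm: inner lattice pt} with the preceding lemmas in a straightforward deduction. First I would recall that by Proposition~\ref{prop:bouvier-GS}, a BGS essential divisor $E_\rho$ over $X$ corresponds to a ray $\rho\subset\sigma$ spanned by an element $w$ of the Hilbert basis of $B=\sigma\cap N$; in particular $w$ is a primitive lattice vector in $\sigma$. The goal is to show the ray $\RR_{\ge0}w$ belongs to the fan $\Delta$, i.e.\ that the corresponding divisor appears on $\FBtilde_{(\infty)}(X)$.

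Second, I would use the hypothesis that $X$ admits a moderate toric resolution $X_\Sigma\to X_\sigma$. By the Proposition just proved, every ray of $\Sigma$ is generated by a Hilbert basis element of $B$, and conversely every Hilbert basis element of $B$ spans a ray of $\Sigma$ (this is the ``in other words'' clause: the toric prime divisors on a moderate toric resolution are \emph{precisely} the BGS essential divisors). Hence our $w$ spans a ray of $\Sigma$. Now pick a maximal (nondegenerate) cone $\tau\in\Sigma$ containing this ray; since $\Sigma$ is smooth, the primitive generators $w=w_1,w_2,\dots,w_d$ of the rays of $\tau$ form a basis of $N$, and since $\Sigma$ is moderate, $\tau$ satisfies condition~(1) of Lemma~\ref{lem:aff hyper}, i.e.\ the affine hyperplane spanned by $w_1,\dots,w_d$ meets every ray of $\sigma$.

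Third, I would feed this into Lemma~\ref{lem:affine-intersect-rays}: its hypotheses are exactly that $w=w_1,w_2,\dots,w_d\in B$ form a basis of $N$ and that their affine span meets every ray of $\sigma$, so we conclude $(\Lambda_{(=1)}^{w})^\circ\cap M\ne\emptyset$. Finally, Theorem~\ref{thm: inner lattice pt} applies verbatim and gives $\RR_{\ge0}w\in\Delta$, which is the assertion that the divisor $E_\rho$ appears on $\FBtilde_{(\infty)}(X)$ as a prime divisor. Running this argument over all Hilbert basis elements of $B$ — equivalently, over all BGS essential divisors — completes the proof.

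I do not anticipate a serious obstacle here: the corollary is a packaging of results already established, and the only point requiring a little care is checking that the $w_i$ can be taken in $B$ (they lie on rays of $\Sigma$, which subdivides $\sigma$, hence in $\sigma\cap N=B$) and that a single cone $\tau$ of $\Sigma$ already witnesses \emph{both} the basis property and the hyperplane-meets-every-ray property. If one wanted to avoid invoking the converse direction of the Proposition, one could instead argue directly: $w$ primitive in $B$ together with the smoothness and moderateness of $\Sigma$ still lets us find such a $\tau$, because a moderate smooth subdivision, being a resolution, has $w$ on one of its rays as soon as $w$ is a Hilbert basis element — but using the Proposition as stated is cleanest.
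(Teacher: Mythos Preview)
Your proposal is correct and follows essentially the same route as the paper's proof: pick a nondegenerate cone $\tau\in\Sigma$ containing the ray $\RR_{\ge0}w$, then invoke Lemma~\ref{lem:aff hyper}, Lemma~\ref{lem:affine-intersect-rays}, and Theorem~\ref{thm: inner lattice pt} in turn. The paper's version is terser and leaves implicit the step that every BGS essential divisor corresponds to a ray of $\Sigma$ (which, as you note, is immediate from the definition of BGS essential rather than requiring the full strength of the preceding Proposition).
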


\begin{proof}
Let $\rho$ be any ray of $\Sigma$. There is a nondegenerate cone
$\tau\in\Sigma$ with $\rho\subset\tau$. From Theorem \ref{thm: inner lattice pt}
and Lemmas \ref{lem:affine-intersect-rays} and \ref{lem:aff hyper},
the corollary holds.
\end{proof}



\begin{prop}\label{prop:convex-moderate}
Suppose that there exists a smooth subdivision \(\Sigma\) of \(\sigma\) such that for every nondegenerate cone \(\tau \in \Sigma\), the simplex \(\tau \cap \{\psi_\tau =1\}\) is contained in the boundary of the convex hull \(C\) of \(B\setminus \{0\}\). Then, \(\Sigma\) is moderate.
\end{prop}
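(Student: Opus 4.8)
The plan is to verify the defining condition of moderateness cone by cone. Since $\Sigma$ is assumed smooth, by Definition \ref{def:moderate} it suffices to show that every nondegenerate cone $\tau\in\Sigma$ satisfies condition (2) of Lemma \ref{lem:aff hyper}: writing $w_1,\dots,w_d$ for the primitive generators of the rays of $\tau$ and $\psi_\tau$ for the linear function with $\psi_\tau(w_i)=1$ for all $i$, one must show $\psi_\tau>0$ on $\sigma\setminus\{0\}$. The geometric idea is that the hypothesis forces the affine hyperplane $\{\psi_\tau=1\}$ to be a supporting hyperplane of $C=\operatorname{conv}(B\setminus\{0\})$ lying on the ``upper'' side, after which positivity of $\psi_\tau$ on the lattice points of $\sigma$, and then on all of $\sigma$, is immediate.

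First I would set up the combinatorics: the $w_i$ are linearly independent (they generate the simplicial nondegenerate cone $\tau$), hence affinely independent, so the simplex $S_\tau:=\tau\cap\{\psi_\tau=1\}=\operatorname{conv}(w_1,\dots,w_d)$ is genuinely $(d-1)$-dimensional with $\operatorname{aff}(S_\tau)=\{\psi_\tau=1\}$; moreover each $w_i$ lies in $\sigma\cap N=B$, so $w_i\in C$, and $C$ is $d$-dimensional (it contains the affinely independent points $w_1,\dots,w_d,w_1+w_2$, all in $B\setminus\{0\}$). The crucial step is then the following: pick $x$ in the relative interior of $S_\tau$; since $S_\tau\subseteq\partial C$ and $\operatorname{int}C=\operatorname{int}\overline{C}$, the point $x$ is a boundary point of the closed convex set $\overline{C}$, so there is a supporting hyperplane $H$ of $\overline{C}$ through $x$. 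If $\operatorname{aff}(S_\tau)$ were not contained in $H$, then $H\cap\operatorname{aff}(S_\tau)$ would be a hyperplane of $\operatorname{aff}(S_\tau)$ passing through the relative-interior point $x$, so $S_\tau$ would meet both open sides of $H$, contradicting $S_\tau\subseteq C\subseteq\overline{C}$ and the fact that $H$ supports $\overline C$. Hence $\operatorname{aff}(S_\tau)\subseteq H$, which forces $H=\{\psi_\tau=1\}$. To pin down the correct side, I note that $2w_1=w_1+w_1\in B\setminus\{0\}\subseteq C$ satisfies $\psi_\tau(2w_1)=2>1$, so necessarily $\overline{C}\subseteq\{\psi_\tau\ge1\}$; in particular $\psi_\tau(b)\ge1$ for every $b\in B\setminus\{0\}$.

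To finish, let $u_1,\dots,u_r$ be the primitive generators of the rays of $\sigma$; each lies in $B\setminus\{0\}$, so $\psi_\tau(u_j)\ge1$ by the previous step. Any $v\in\sigma$ can be written $v=\sum_j c_j u_j$ with $c_j\ge0$, so $\psi_\tau(v)=\sum_j c_j\psi_\tau(u_j)\ge\sum_j c_j$, which is $>0$ unless $v=0$. Thus $\psi_\tau>0$ on $\sigma\setminus\{0\}$, i.e. condition (2) of Lemma \ref{lem:aff hyper} holds for $\tau$; as $\tau$ was an arbitrary nondegenerate cone of $\Sigma$ and $\Sigma$ is smooth, $\Sigma$ is moderate.

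The step I expect to require the most care is the supporting-hyperplane argument: one must use the hypothesis precisely — it is the relative interior of $S_\tau$, not merely its vertices, that must avoid $\operatorname{int}C$ — and the straddling argument must be phrased so as to genuinely conclude the equality of hyperplanes $\operatorname{aff}(S_\tau)=H$, rather than only a lower-dimensional incidence. The remaining ingredients (affine independence of the $w_i$, full-dimensionality of $C$, and the final positivity computation on $\sigma$) are routine.
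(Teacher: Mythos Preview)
Your proof is correct and follows essentially the same approach as the paper's: both arguments deduce $C\subset\{\psi_\tau\ge1\}$ from the hypothesis and convexity, then conclude one of the equivalent conditions of Lemma~\ref{lem:aff hyper}. The paper's proof is a one-line sketch of this idea, while you have carefully spelled out the supporting-hyperplane step and verified condition (2) rather than condition (1); the substance is the same.
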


\begin{proof}
The convexity of \(C\) shows that $C\subset \{\psi_{\tau} \ge 1\}$. This shows that the affine hyperplane $\{\psi_{\tau}=1\}$ intersects every ray of $\sigma$. The proposition follows from the definition of moderate smooth subdivisions. 
\end{proof}

\begin{example}
When \(d=2\), Proposition \ref{prop:convex-moderate} and \cite[Th.\ 10.2.8]{cox2011toricvarieties} shows that the minimal resolution of \(X\) is a moderate toric resolution.
\end{example}

\begin{example}\label{exa:crepant-moderate}
Suppose that \(X\) is \(\QQ\)-Gorenstein, equivalently, that the primitive elements of rays of \(\sigma\) are on the same affine hyperplane say \(H\). Then,
a toric crepant resolution \(Y\to X\) is a moderate toric resolution. Indeed, for every toric prime divisor on \(Y\), the corresponding primitive element in \(B\) is on \(H\). Thus, the assumption of Proposition \ref{prop:convex-moderate} is satisfied.
\end{example}

\begin{cor}
\label{cor:crep-ess}If $X$ admits a toric crepant resolution, then
every BGS essential divisor appears on $\FBtilde_{(\infty)}(X)$ as a prime divisor.
\end{cor}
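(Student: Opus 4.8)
The plan is to obtain this corollary as an immediate consequence of the moderate-resolution case, Corollary \ref{cor:moderate-ess}, once one knows that a toric crepant resolution is a moderate toric resolution. So the argument really has two moves: first identify that the hypothesis of Corollary \ref{cor:moderate-ess} is met, then cite it.

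For the first move, I would argue as follows. If $X = X_\sigma$ admits a toric crepant resolution $Y = X_\Sigma \to X$, then $X$ is $\QQ$-Gorenstein, so the primitive generators of the rays of $\sigma$ lie on a common affine hyperplane $H \subset N_\RR$; and crepancy forces the primitive generator of every ray of $\Sigma$ to lie on this same $H$. Hence for each nondegenerate cone $\tau \in \Sigma$ with primitive ray generators $w_1,\dots,w_d$, the simplex $\tau \cap \{\psi_\tau = 1\}$, being the convex hull of $w_1,\dots,w_d$, is contained in $H$; since $H$ supports the convex hull $C$ of $B \setminus \{0\}$, this simplex lies on the boundary of $C$. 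Thus the hypothesis of Proposition \ref{prop:convex-moderate} is satisfied and $\Sigma$ is moderate. This is precisely what Example \ref{exa:crepant-moderate} records, so in practice I would simply invoke that example rather than repeat the verification.

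For the second move, I would apply Corollary \ref{cor:moderate-ess} to the moderate toric resolution $Y \to X$: it yields that every BGS essential divisor over $X$ appears on $\FBtilde_{(\infty)}(X)$ as a prime divisor, which is the claim. I expect no genuine obstacle here: the substance is in Theorem \ref{thm: inner lattice pt} and the combinatorial analysis of moderate subdivisions preceding it, and this corollary only repackages that work. The only point deserving a line of care is the verification that crepancy places all the relevant ray generators on $H$ and that $H$ is honestly a supporting hyperplane of $C$ — both are routine facts about $\QQ$-Gorenstein toric varieties, already noted in Example \ref{exa:crepant-moderate}.
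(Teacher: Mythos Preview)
Your proposal is correct and matches the paper's proof exactly: the paper's entire argument is the one-line citation ``This follows from Example \ref{exa:crepant-moderate} and Corollary \ref{cor:moderate-ess},'' and your two moves are precisely that example (crepant $\Rightarrow$ moderate via Proposition \ref{prop:convex-moderate}) followed by Corollary \ref{cor:moderate-ess}.
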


\begin{proof}
This follows from Example \ref{exa:crepant-moderate} and Corollary \ref{cor:moderate-ess}.
\end{proof}

\begin{example}
Every three-dimensional Gorenstein toric variety admits a crepant
resolution \cite[Prop.\ 11.4.19]{cox2011toricvarieties}. Every toric
variety (of any dimension) having only l.c.i.~singularities admits
a crepant resolution \cite{dais2001alltoric}.
\end{example}

\begin{rem}
Corollary \ref{cor:moderate-ess} does not mean that if $\Sigma$
is a moderate smooth subdivision of $\sigma$, then $\Delta$ is a
refinement of $\Sigma$. For example, for some finite abelian group
$G\subset\SL_{3}(\CC)$, the quotient $\AA_{\CC}^{3}/G$, which is
a toric variety, admits several toric crepant resolutions, which are
all moderate. The normalized limit F-blowup $\FBtilde_{(\infty)}(X)$
is also a toric crepant resolution. Thus, if $Y$ is a toric crepant
resolution of $\AA_{\CC}^{3}/G$ different from $\FBtilde_{(\infty)}(X)$,
then the morphism $\FBtilde_{(\infty)}(X)\to X$ does not factor through
$Y$.
\end{rem}

\begin{example}
\label{exa:non-min wt}Let $M=N=\ZZ^{3}$ and $M_{\RR}=N_{\RR}=\RR^{3}$
and suppose that the pairing $M_{\RR}\times N_{\RR}\to\RR$ is given
by the standard inner product of $\RR^{3}$. Consider the cone $\sigma\subset\RR^{3}$
spanned by $(1,0,0)$, $(0,1,0)$ and $(1,2,4)$, which appears in
\cite[p.\ 300]{atanasov2011resolving}. Its dual cone $\sigma^{\vee}=A_{\RR}$
is spanned by $(0,0,1)$, $(4,0,-1)$ and $(0,2,-1)$. Fix a weight
vector $w=(1,2,2)\in\sigma^{\circ}$. Since $(1,2,2)=(1,1,2)+(0,1,0)$,
$w$ is not minimal  in $B\setminus\{0\}$. The
triangle $\Lambda_{(=1)}$ has vertices
\[
P_{1}=(0,0,1/2),\,Q_{1}=(2,0,-1/2),\,R_{1}=(0,1,-1/2).
\]
Note that $(\Lambda_{(=1)})^{\circ}\cap\ZZ^{3}=\emptyset$. Indeed,
every element of $(\Lambda_{(=1)})^{\circ}$ is written as
\[
aP_{1}+bQ_{1}+cR_{1}\quad(a,b,c>0,\,a+b+c=1),
\]
which has the second coordinate in $(0,1)$. Thus, it cannot be an
element of $\ZZ^{3}$. The three vertices are integral to each other.
We claim that there is no point in $\Lambda_{(<1)}$ which is integral
relative to $P_{1}$, $Q_{1}$ and $R_{1}$. Indeed, if $U\in\Lambda_{(<1)}$
was integral to these points, then 
\[
w(U)\in[0,1)\cap\ZZ=\{0\}.
\]
The unique point of $\Lambda_{(<1)}$ with $w=0$ is the origin. A
direct computation shows that the origin is not integral to the three
points, which shows the claim. Thus, every edge of $\Lambda_{(=1)}$
given with either direction is a critical arrow. In particular, there
are two critical arrows in $\Lambda_{(=1)}$ whose associated vectors
are linearly independent. From Corollary \ref{cor:lin-indep-ray},
the ray $\RR_{\ge0}w$ belongs to $\Delta$, even though $w$ is not
minimal in $B\setminus\{0\}$. This example leads to the following problem.
\end{example}

\begin{problem}
Characterize 
divisors over $X$ which are not BGS essential, but appear on $\FBtilde_{(\infty)}(X)$. 
\end{problem}

\begin{rem}
Example \ref{exa:non-min wt} also shows that the existence of a moderate
toric resolution is a sufficient condition but not a necessary condition
for all BGS essential divisors appearing on $\FBtilde_{(\infty)}(X)$.
Thus, although some toric variety $X$ has no moderate toric resolution
(Example \ref{exa:no moderate res}), there is still a hope that all BGS
essential divisors may appear on $\FBtilde_{(\infty)}(X)$ for such
an $X$. It would be an interesting problem to look for a weaker sufficient
condition that can cover some toric varieties having no moderate toric
resolution. 
\end{rem}


\begin{rem}
\label{rem:noncommutative}Here we mention relation between our work
and noncommutative resolutions. As was observed in \cite{toda2009noncommutative},
if $X$ is an affine variety $\Spec R$ in characteristic $p>0$,
then the $e$-th F-blowup $\FB_{e}(X)$ may be viewed as the commutative
counterpart of the endomorphism ring $\mathrm{End}_{R}(R^{1/p^{e}})$.
When having finite global dimension, this ring is called a \emph{noncommutative
resolution} of $X=\Spec R$. This is the case for $e\gg0$, if $X$
has only tame quotient singularities \cite{toda2009noncommutative}
(see also \cite{liedtkenoncommutative} for a slight generalization
to quotients by linearly reductive finite group schemes) or if $X$
has only normal toric singularities \cite{vspenko2017noncommutative,faber2019noncommutative}.
The naive commutative counterpart of the last result would be the
statement that for a normal toric variety $X$ and for $e\gg0$, $\FB_{e}(X)$
is smooth. However, this is not true in general, as mentioned in Introduction
(see also Remark \ref{rem:G-Hilb}). It should be noted also that
the relation between commutative resolutions and non-commutative resolutions
tends to be less clear as the dimension increases. 
\end{rem}

\appendix

\section{Relations among several notions of essential divisors over a toric variety\label{appendix:Essential-divisors}}

Let $k$ be a field and let $Y$ be a normal
variety over $k$. For a proper birational morphism $f\colon Z\to Y$
from a normal variety, a prime divisor $E\subset Z$ is called a \emph{divisor
over }$Y$. We identify two divisors $E\subset Z$ and $E'\subset Z'$
over $Y$ if they correspond to each other by the natural birational
map between $Z$ and $Z'$. Equivalently, we identify them if they
induce the same valuation on the function field of $Y$. 

Let $E$ be a divisor over $Y$ and let $f\colon Z\to Y$ and $g\colon Z'\to Y$
be proper birational morphisms from normal varieties. Suppose that
$E$ is realized as a prime divisor of $Z$. The birational map $g^{-1}\circ f\colon Z\dasharrow Z'$
is defined on a nonempty open subset $E_{0}\subset E$. The \emph{center
}of $E$ on $Z'$ is defined to be the closure of $(g^{-1}\circ f)(E_{0})$
in $Z'$. This is independent of the birational model $Z$ on which
$E$ is realized as a prime divisor. We say that $E$ is \emph{exceptional
}if its center on $Y$ has codimension $>1$. More generally, if $Y'$
is a birational model of $Y$ and if $g\colon Z\dasharrow Y'$ is
the natural birational map, then the \emph{center }of $E$ on $Y'$
is defined to be $\overline{g(E)}$, the closure of $g(E)$. We say
that $E$ \emph{appears on $Y'$ }if the center of $E$ on $Y'$ has
codimension one. 

\begin{defn}[{\cite[Def.\ 2.3]{ishii2003thenash}}]
Let $Y$ be a normal variety and let $E$ be an exceptional divisor
over $Y$. We say that $E$ is an \emph{essential divisor }if for
every resolution $f\colon Z\to Y$, the center of $E$ on $Z$ is
an irreducible component of $f^{-1}(Y_{\sing})$ with $Y_{\sing}$
the singular locus of $Y$. We say that a resolution $f\colon Z\to Y$
is \emph{divisorial }if its exceptional set is of pure codimension
one. We say that $E$ is a \emph{divisorially essential divisor }if
$E$ appears on every divisorial resolution $f\colon Z\to Y$. 
\end{defn}

\begin{rem}\label{rem:Q-factorial}
Note that if \(Y\) is \(\QQ\)-factorial, then every resolution \(Z \to Y\) is divisorial \cite[Th.~1.5, Chap.~VI]{Kollar-rational-curves}
and hence there is no difference between ``essential'' and ``divisorially essential.''
\end{rem}

Let us now focus on the case of toric varieties. 
Let $\sigma\subset N_{\RR}$ be a strictly convex, nondegenerate (that
is, $d$-dimensional), rational polyhedral cone and let \(X\) be the associated affine normal toric variety. 

\begin{defn}
An exceptional divisor over $X$ is called a \emph{toric divisorially
essential divisor }over $X$\emph{ }if $E$ appears on every toric
divisorial resolution of $X$.
\end{defn}

\begin{prop}[{\cite[p.\ 609 and Cor.\ 3.17]{ishii2003thenash}}]
\label{prop:ishii-kollar}
For the toric variety $X$, the notions
of essential divisors, divisorially essential divisors, and toric
divisorially essential divisors coincide. 
\end{prop}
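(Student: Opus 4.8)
The plan is to prove Proposition \ref{prop:ishii-kollar} by reducing all three notions to purely combinatorial statements about subdivisions of the fan and the lattice points of $\sigma$, following the toric dictionary. First I would recall that for the affine toric variety $X = X_\sigma$, every toric birational modification corresponds to a subdivision $\Sigma$ of $\sigma$, the toric prime divisors on $X_\Sigma$ correspond to the rays of $\Sigma$, and such a divisor is exceptional over $X$ precisely when its ray is not a ray of $\sigma$. A resolution (not necessarily toric) is replaced, via toric equivariant resolution of singularities, by a toric one that dominates it; this is the standard observation that lets one test ``essential'' on toric resolutions only, and it is exactly the content cited from \cite[p.~609]{ishii2003thenash}. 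The key point is that for a toric resolution $X_\Sigma \to X$, the exceptional set $f^{-1}(X_{\sing})$ has, as its irreducible components, precisely the orbit closures corresponding to rays $\rho \in \Sigma(1)$ that are not rays of $\sigma$ (together with possibly higher-codimension strata, but those do not matter for the divisorial condition), so the center of an exceptional divisor $E_\rho$ on $X_\Sigma$ being an irreducible component of $f^{-1}(X_{\sing})$ is equivalent to $\rho \in \Sigma(1)$.

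The argument then proceeds by a chain of implications among the three notions. Clearly every essential divisor is divisorially essential, since divisorial resolutions form a subclass of resolutions, and — because every smooth toric fan is simplicial hence every toric resolution is automatically divisorial (each ray gives an honest exceptional divisor) — every divisorially essential divisor is toric divisorially essential. So the real work is the reverse implication: if $E_\rho$ is toric divisorially essential, i.e.\ the ray $\rho$ lies in $\Sigma(1)$ for every smooth subdivision $\Sigma$ of $\sigma$, then $E_\rho$ is essential in the strong sense (its center on every resolution is a component of the exceptional locus). The bridge is the combinatorial characterization of BGS essential rays (Proposition \ref{prop:bouvier-GS}): $\rho$ appears on every toric resolution iff it is spanned by an element of the Hilbert basis of $B = \sigma \cap N$, at least after one checks that for the purpose of this appendix the relevant rays are precisely the exceptional ones. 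Given a general (non-toric) resolution $g\colon Z \to X$, one dominates it by a smooth toric $X_\Sigma$; since $\rho \in \Sigma(1)$, the divisor $E_\rho$ appears on $X_\Sigma$, and chasing the center of $E_\rho$ down to $Z$ one must show it is a codimension-one component of $g^{-1}(X_{\sing})$. This uses that $E_\rho$ dominates (via the toric-to-$Z$ map) a divisor on $Z$ — which follows because $E_\rho$ has codimension one and the map $X_\Sigma \dashrightarrow Z$ is birational — and that this divisor lies over $X_{\sing}$ with the right dimension, which one extracts from the toric structure of the exceptional locus described above.

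The main obstacle, and the step I would be most careful about, is the passage from toric resolutions to arbitrary ones while keeping track of the exact statement ``center on $Z$ is an irreducible component of $f^{-1}(X_{\sing})$'' rather than merely ``appears on $Z$.'' One has to argue both directions of this: that a toric divisorially essential $E_\rho$ does produce such a component on an arbitrary $Z$ (the forward direction of the nontrivial implication), and conversely that a divisor whose center fails to be such a component on some $Z$ can be avoided by a suitable toric divisorial resolution — the latter being a genuine construction, typically by choosing a subdivision of $\sigma$ that uses the Hilbert basis minimally, as in \cite[Example~3.1]{bouvier1995systeme} and the surrounding discussion, to realize a resolution on which the undesired ray does not appear. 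I would handle this by invoking Proposition \ref{prop:bouvier-GS} to pin down exactly which rays are forced, then matching that combinatorial set against the geometric definition of essential divisor via the orbit-closure description of exceptional loci. Finally I would note, as in Remark \ref{rem:Q-factorial}, that when $\sigma$ is simplicial there is nothing extra to check since every resolution is divisorial, which streamlines the $\QQ$-factorial case but is not needed for the general statement.
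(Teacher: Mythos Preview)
Your proposal diverges from the paper's proof in its very aim. The paper does \emph{not} reprove the Ishii--Koll\'ar result; it simply cites \cite[p.~609 and Cor.~3.17]{ishii2003thenash} for the case of an algebraically closed base field and then spends the proof reducing the case of an arbitrary field $k$ to that one, via base change to $\overline{k}$ (using that a toric divisor is geometrically irreducible, so its base change remains a prime divisor, and that the property ``center is an irreducible component of $f^{-1}(X_{\sing})$'' descends). You never mention the base-field issue, which is the entire content of the paper's argument.

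Setting that aside and treating your proposal as an attempt to reprove the Ishii--Koll\'ar theorem itself, there are two genuine gaps. First, the step ``a resolution (not necessarily toric) is replaced, via toric equivariant resolution of singularities, by a toric one that dominates it'' is false as stated: an arbitrary resolution $Z\to X$ need not be dominated by any toric variety, since $Z$ carries no torus action in general. The reduction to toric data in \cite{ishii2003thenash} is not done this way; rather, one proves directly that a ray spanned by a minimal element of $S_\sigma$ has the required center property on \emph{every} resolution, toric or not. Second, you invoke Proposition~\ref{prop:bouvier-GS} (the Hilbert-basis characterization of BGS essential rays) as the combinatorial bridge, but that is the wrong invariant: BGS essential divisors correspond to minimal elements of $B\setminus\{0\}$, whereas essential divisors in the Ishii--Koll\'ar sense correspond to minimal elements of $S_\sigma$ (Proposition~\ref{prop:ess-min}). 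The paper explicitly notes after Proposition~\ref{prop:ess-min} that these sets differ in general (the example with $\sigma$ generated by $(0,0,1),(1,0,1),(0,1,1),(1,1,1)$), so your bridge would prove the wrong statement.
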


\begin{proof}
This was proved in the cited paper under the assumption that \(k\) is algebraically closed. We prove the proposition in the general case by reducing it to this special case. We have the following obvious implications:
\[
 \text{essential} \Rightarrow \text{divisorially essential} \Rightarrow    \text{toric divisorially essential}.
\]
Thus, we only need to show that every toric divisorially essential divisor is also essential. Let \(f\colon Y\to X\) be a resolution and let \(E\) be a toric divisorially essential divisor over \(X\). Note that \(E\) is geometrically irreducible, since it is toric. Let \(f_{\overline k}\colon Y_{\overline k} \to X_{\overline k}\) and \(E_{\overline k}\) denote their base changes to an algebraic closure  \(\overline{k}\)  of \(k\). Then, \(E_{\overline k}\) is a toric divisorially essential divisor over \(X\) and hence essential over \(X\). Thus, the center of \(E_{\overline k}\) in \(X_{\overline k}\) is an irreducible component of the exceptional set of \(f_{\overline k}\). Finally, this shows that the same thing is true over \(k\) and hence \(E\) is essential.
\end{proof}


\begin{defn}
We define 
\[
S_{\sigma}:=N\cap\left(\bigcup_{\tau:\text{singular face of \ensuremath{\sigma}}}\tau^{\circ}\right),
\]
where $\tau^{\circ}$ denotes the relative interior of $\tau$. 
\end{defn}

\begin{prop}[{\cite[p.\ 609 and Cor.\ 3.17 and 3.18]{ishii2003thenash}}]
\label{prop:ess-min}Let $\rho\subset\sigma$ be a ray and let $E_{\rho}$
be the corresponding divisor over $X$. Then, $\rho$ is spanned by
a minimal element of $S_{\sigma}$ if and only if $E_{\rho}$ is an
essential divisor. 
\end{prop}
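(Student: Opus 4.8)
The plan is to reduce the statement, via Proposition~\ref{prop:ishii-kollar}, to a purely combinatorial claim about the fan, and then to prove that claim (or, equivalently, invoke the cited combinatorial results of Ishii and Koll\'ar, which hold over any field). First observe that a minimal element $w$ of $S_\sigma$ lies in the relative interior of a \emph{singular}, hence at least two-dimensional, face of $\sigma$; thus $\rho=\RR_{\ge0}w$ is not a ray of $\sigma$, so $E_\rho$ is automatically exceptional and the ``if'' direction is well posed. By Proposition~\ref{prop:ishii-kollar}, $E_\rho$ is essential if and only if it is toric divisorially essential, that is, if and only if $\rho$ is a ray of every smooth divisorial subdivision of $\sigma$. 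Since the condition ``the exceptional locus of $X_\Sigma\to X_\sigma$ has pure codimension one'' translates into the combinatorial requirement that every cone of $\Sigma$ which is not a face of $\sigma$ possess a ray which is not a ray of $\sigma$, this property depends only on $\sigma$ and the lattice $N$, not on $k$. It therefore suffices to prove: for primitive $w\in\sigma\cap N$, the ray $\RR_{\ge0}w$ lies in every smooth divisorial subdivision of $\sigma$ if and only if $w$ is a minimal element of $S_\sigma$.

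For the ``if'' direction, let $w$ be minimal in $S_\sigma$, let $\Sigma$ be a smooth divisorial subdivision, and let $\eta\in\Sigma$ be the smallest cone with $w\in\eta^\circ$; assume $\dim\eta\ge2$ towards a contradiction. A short argument shows $\eta^\circ\subseteq\tau^\circ$, where $\tau$ is the minimal face of $\sigma$ with $w\in\tau^\circ$, and $\tau$ is singular since $w\in S_\sigma$. As $\eta$ is smooth we may write $w=\sum_i a_iu_i$ with $u_i$ the primitive ray generators of $\eta$ and $a_i\in\ZZ_{>0}$. If some $a_i\ge2$, then $w-u_i\in\eta^\circ\cap N\subseteq\tau^\circ\cap N\subseteq S_\sigma$ is strictly below $w$ in the partial order of $B$, contradicting minimality. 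If every $a_i=1$, then $\eta$ is a smooth cone containing the point $w\in S_\sigma$ in its relative interior and hence cannot be a face of $\sigma$; divisoriality of $\Sigma$ then forces some ray $\RR_{\ge0}u_i$ to be ``new'', so $u_i$ lies in the relative interior of a face $\delta$ of $\sigma$ of dimension $\ge2$, and a finite descent (replacing $u_i$ by a ray generator of $\delta$ lying below it, in $w=u_i+\sum_{j\ne i}u_j$, and iterating) again yields a strictly smaller element of $S_\sigma$, a contradiction. For the converse, assume $w$ is not minimal in $S_\sigma$. If $w\notin S_\sigma$, then $w$ lies in the relative interior of a smooth face of $\sigma$, and resolving $\sigma$ by star subdivisions centered only at lattice points other than $w$ produces a smooth divisorial subdivision missing $\rho$. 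If $w\in S_\sigma$ admits a strictly smaller $w'\in S_\sigma\setminus\{0\}$, one first star-subdivides at $w'$ and then completes to a smooth divisorial subdivision in which the ray $\rho$ never appears.

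The step I expect to be the main obstacle is the case ``$a_i=1$ for all $i$'' in the ``if'' direction: one must rule out that a minimal element of $S_\sigma$ is the primitive sum of the ray generators of a smooth cone of a divisorial subdivision that is not a face of $\sigma$. This is precisely where divisoriality (as opposed to being a mere resolution) enters essentially, and where one really exploits the recursive structure of $S_\sigma$ through repeated passage to faces. The analogous difficulty in the converse direction is the explicit construction of a smooth divisorial subdivision realizing a prescribed ray $\RR_{\ge0}w'$ while avoiding $\rho$. Both of these are handled combinatorially in \cite[Cor.\ 3.17 and 3.18]{ishii2003thenash}; hence, once the reduction furnished by Proposition~\ref{prop:ishii-kollar} is in place, one may also simply quote that reference.
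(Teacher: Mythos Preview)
Your reduction is correct and your final argument---apply Proposition~\ref{prop:ishii-kollar} to replace ``essential'' by the purely combinatorial notion ``toric divisorially essential,'' note that this is field-independent, and then cite \cite[Cor.~3.17, 3.18]{ishii2003thenash}---is essentially the paper's approach. The paper phrases the field-independence step slightly differently, invoking the base-change argument from the \emph{proof} of Proposition~\ref{prop:ishii-kollar} to pass directly from essential over $k$ to essential over $\overline{k}$, whereas you use the \emph{statement} of Proposition~\ref{prop:ishii-kollar} to convert to a combinatorial condition; both routes are valid and amount to the same thing.

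Your attempted direct combinatorial proof, however, has real gaps that you are right to flag. In the ``if'' direction with all $a_i=1$, the ``finite descent'' is not an argument: when the face $\delta$ containing $u_i$ in its interior is smooth, you replace $u_i$ by ray generators $v_k$ of $\delta$, but these $v_k$ are primitive generators of rays of $\sigma$ and hence never lie in $S_\sigma$, so the descent does not obviously produce a smaller element of $S_\sigma$ below $w$. In the converse direction, the assertion that one can complete a star subdivision at $w'$ to a smooth \emph{divisorial} subdivision avoiding $\rho$ is exactly the nontrivial construction in Ishii--Koll\'ar and cannot be waved through. Since you ultimately defer to the citation, these gaps do not affect the validity of your proof, but the sketch as written should not be mistaken for a self-contained argument.
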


\begin{proof}
Again,  the paper \cite{ishii2003thenash} treats the case where \(k\) is algebraically closed. However, the proof of Proposition \ref{prop:ishii-kollar} shows that the set of essential divisors stays the same by the base change to \(\overline k\). Thus, the assertion remains true over any field.
\end{proof}



If \(E_\rho \) is an exceptional divisor over \(X\) and if it appears on every (not necessarily divisorial) toric resolution of \(X\) as a prime divisor, then it is toric divisorially essential. Namely, a BGS essential divisor over \(X\) is either a toric divisorially essential divisor over \(X\) or a toric prime divisor on \(X\). 
Equivalently,
we have
\begin{equation*}\label{min-incl}
\begin{gathered}
    \{\text{minimal elements of \ensuremath{B\setminus\{0\}}}\}\subset\\
    \{\text{minimal elements of \ensuremath{S_{\sigma}}}\}\cup\{\text{primitive elements of the rays of \ensuremath{\sigma}}\}.
\end{gathered}
\end{equation*}
The opposite is not generally true, as the following example shows.

\begin{example}
Suppose that \(N=\ZZ^3\) and \(\sigma \subset \RR^3\) is generated by 
\[
(0,0,1), (1,0,1), (0,1,1), (1,1,1).
\] 
The toric variety \(X\) then have two small resolutions, corresponding to dividing the square with these four points as vertices along two diagonals respectively. This shows that there is no BGS essential exceptional divisor over \(X\) (all the BGS essential divisors are non-exceptional). 
On the other hand, \((1,1,2)\) is a minimal element of \(S_\sigma\) and hence the  divisor over \(X\) corresponding to the ray \(\RR_{\ge 0}(1,1,2)\) is essential. 
\end{example}

\begin{prop}\label{prop:simplicial}
Suppose that \(\sigma\) is simplicial. Then, every essential divisor over \(X\) is BGS essential. Namely, a toric divisor \(E\) over \(X\) is essential if and only if it is exceptional and BGS essential. 
\end{prop}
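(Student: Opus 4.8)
The plan is to translate the statement into a combinatorial assertion about $\sigma$ via Propositions~\ref{prop:ess-min} and~\ref{prop:bouvier-GS}, and then prove that assertion. Recall that $E_{\rho}$ is essential if and only if $\rho$ is spanned by a minimal element of $S_{\sigma}$ (Proposition~\ref{prop:ess-min}), BGS essential if and only if $\rho$ is spanned by an element of the Hilbert basis of $B=\sigma\cap N$ (Proposition~\ref{prop:bouvier-GS}), and exceptional if and only if its primitive generator is not a ray generator of $\sigma$. A minimal element of $S_{\sigma}$ lies in the relative interior of a singular face, which has dimension $\ge 2$, hence is never a ray generator of $\sigma$; combined with the displayed inclusion above, this already shows that every exceptional BGS essential divisor is essential, and this implication needs no simpliciality. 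Since essential divisors are exceptional by definition, it therefore suffices to prove: \emph{if $\sigma$ is simplicial, then every minimal element $w$ of $S_{\sigma}$ (for the natural order in which $v\le w$ means $w-v\in\sigma$) lies in the Hilbert basis of $B$}; this yields both that essential implies BGS essential and the remaining implication of the ``if and only if.''

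To prove this I would argue by contradiction, assuming $w\in\tau^{\circ}$ for a singular face $\tau$ of $\sigma$, with $w$ minimal in $S_{\sigma}$ but not in the Hilbert basis of $B$. Since $\sigma$ is simplicial, $\tau$ is simplicial; let $u_{1},\dots,u_{k}$ be its (linearly independent) primitive ray generators. As $w$ is not in the Hilbert basis, $w=v+v'$ for some $v,v'\in B\setminus\{0\}$. Because $\tau$ is a face of $\sigma$, both $v$ and $v'$ lie in $\tau$, so their minimal faces in $\sigma$ are faces of $\tau$, i.e.\ are spanned by subsets of $\{u_{1},\dots,u_{k}\}$. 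Moreover $v,v'<w$ are nonzero, so minimality of $w$ in $S_{\sigma}$ forces $v,v'\notin S_{\sigma}$; hence their minimal faces are \emph{smooth}, and a lattice point in the relative interior of a smooth face is a strictly positive integer combination of that face's ray generators. Summing the two expressions gives $w=\sum_{l}n_{l}u_{l}$ with $n_{l}\in\ZZ_{\ge 0}$, and since $w\in\tau^{\circ}$ every $n_{l}$ is $\ge 1$.

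To finish, I would use that $\tau$ is singular: the sublattice $\sum_{l}\ZZ u_{l}$ has index $\ge 2$ in $N\cap\langle\tau\rangle_{\RR}$, so it is proper, and after subtracting a suitable element of it, a lattice vector lying outside it can be written $y=\sum_{l}t_{l}u_{l}$ with $0\le t_{l}<1$ and $y\ne 0$. Then $w-y=\sum_{l}(n_{l}-t_{l})u_{l}$ has all coefficients positive, so $0\ne w-y<w$ and $w-y\in\tau^{\circ}\cap N\subset S_{\sigma}$, contradicting the minimality of $w$. This proves the combinatorial assertion, and with it the whole proposition.

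The crux, and the only place simpliciality is used, is the second paragraph: showing that a minimal non--Hilbert-basis element of $S_{\sigma}$ must be a positive integer combination of the ray generators of the singular face $\tau$ that contains it. This combines the smoothness of the proper subfaces of $\tau$ through which $w$ decomposes (forced by minimality of $w$) with the linear independence of the $u_{l}$; simpliciality is what makes ``subface spanned by some $u_{l}$'s'' a genuine restriction. Once $w$ has this form, manufacturing a strictly smaller element of $S_{\sigma}$ from a lattice vector of $N\cap\langle\tau\rangle_{\RR}$ lying outside $\sum_{l}\ZZ u_{l}$ is routine.
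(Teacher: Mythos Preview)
Your proof is correct, but it follows a genuinely different route from the paper's. The paper argues geometrically: since $\sigma$ is simplicial, $X$ is $\QQ$-factorial (as a quotient of affine space by a finite diagonalizable group scheme), and for a $\QQ$-factorial variety every resolution is automatically divisorial (Remark~\ref{rem:Q-factorial}); hence every toric resolution of $X$ is divisorial, so ``BGS essential'' and ``toric divisorially essential'' coincide, and the latter equals ``essential'' by Proposition~\ref{prop:ishii-kollar}. Your argument instead translates everything through Propositions~\ref{prop:bouvier-GS} and~\ref{prop:ess-min} into the purely combinatorial claim that (for simplicial $\sigma$) every minimal element of $S_{\sigma}$ lies in the Hilbert basis of $B$, and proves this by an explicit lattice computation inside the singular face $\tau$. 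The paper's route is shorter and more conceptual, and explains \emph{why} the equivalence holds (all resolutions are divisorial); your route is entirely self-contained within convex lattice geometry and avoids invoking $\QQ$-factoriality and the structure of exceptional loci, which makes it independent of the algebro-geometric input in Remark~\ref{rem:Q-factorial}.
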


\begin{proof}
It suffices to show that every toric divisorially essential divisor is BGS essential. 
From the assumption, \(X\) is \(\QQ\)-factorial. This is well-known in characteristic zero (for example, see \cite[Lem.\ 14-1-1]{matsuki}). In arbitrary characteristic, \(X\) is the quotient of an affine space by an action of a finite diagonalizable group scheme. We can use this fact to show that \(X\) is \(\QQ\)-factorial. As  noted in Remark \ref{rem:Q-factorial}, every resolution of a \(\QQ\)-factorial variety is divisorial. This shows that every toric divisorially essential divisor is BGS essential, as desired.
\end{proof}

\bibliographystyle{alpha}
\bibliography{FB-EssDivs}

\newcommand{\etalchar}[1]{$^{#1}$}
\begin{thebibliography}{ALP{\etalchar{+}}11}

\bibitem[ALP{\etalchar{+}}11]{atanasov2011resolving}
Atanas Atanasov, Christopher Lopez, Alexander Perry, Nicholas Proudfoot, and Michael Thaddeus.
\newblock Resolving {Toric} {Varieties} with {Nash} {Blowups}.
\newblock {\em Experimental Mathematics}, 20(3):288--303, January 2011.
\newblock Publisher: A K Peters, Ltd.

\bibitem[BGS95]{bouvier1995systeme}
Catherine Bouvier and G{\'e}rard Gonzalez-Sprinberg.
\newblock Syst{\`e}me g{\'e}n{\'e}rateur minimal, diviseurs essentiels et ${G}$-d{\'e}singularisations de vari{\'e}t{\'e}s toriques.
\newblock {\em Tohoku Mathematical Journal}, 47(1):125--149, January 1995.
\newblock Publisher: Tohoku University, Mathematical Institute.

\bibitem[CLS11]{cox2011toricvarieties}
David~A. Cox, John~B. Little, and Henry~K. Schenck.
\newblock {\em Toric varieties}, volume 124 of {\em Graduate {Studies} in {Mathematics}}.
\newblock American Mathematical Society, Providence, RI, 2011.

\bibitem[CM23]{chavez-martinez2021factorization}
Enrique Ch\'{a}vez-Mart\'{\i}nez.
\newblock Factorization of the normalization of the {N}ash blowup of order {$n$} of {$\mathcal{A}_n$} by the minimal resolution.
\newblock {\em Rev. Mat. Iberoam.}, 39(4):1201--1232, 2023.

\bibitem[CMT07]{craw2007moduliof}
Alastair Craw, Diane Maclagan, and Rekha~R. Thomas.
\newblock Moduli of {McKay} quiver representations {II}: {Gr{\"o}bner} basis techniques.
\newblock {\em Journal of Algebra}, 316(2):514--535, October 2007.

\bibitem[DHZ01]{dais2001alltoric}
Dimitrios~I. Dais, Christian Haase, and G{\"u}nter~M. Ziegler.
\newblock All toric local complete intersection singularities admit projective crepant resolutions.
\newblock {\em Tohoku Mathematical Journal}, 53(1), January 2001.

\bibitem[Dua13]{duarte2013nashmodification}
Andr{\'e}s~Daniel Duarte.
\newblock {\em Nash modification on toric surfaces and higher {Nash} blowup on normal toric varieties}.
\newblock These de doctorat, Toulouse 3, January 2013.

\bibitem[FMS19]{faber2019noncommutative}
Eleonore Faber, Greg Muller, and Karen~E. Smith.
\newblock Non-commutative resolutions of toric varieties.
\newblock {\em Advances in Mathematics}, 351:236--274, July 2019.

\bibitem[Har12]{hara2012fblowups}
Nobuo Hara.
\newblock F-blowups of {F}-regular surface singularities.
\newblock {\em Proceedings of the American Mathematical Society}, 140(7):2215--2226, July 2012.

\bibitem[Har15]{hara2015structure}
Nobuo Hara.
\newblock Structure of the {F}-blowups of simple elliptic singularities.
\newblock {\em The University of Tokyo. Journal of Mathematical Sciences}, 22(1):193--218, 2015.

\bibitem[HS11]{hara2011splitting2}
Nobuo Hara and Tadakazu Sawada.
\newblock Splitting of {Frobenius} sandwiches.
\newblock In {\em Higher dimensional algebraic geometry}, {RIMS} {K{\^o}ky{\^u}roku} {Bessatsu}, {B24}, pages 121--141. Res. Inst. Math. Sci. (RIMS), Kyoto, 2011.

\bibitem[HSY13]{hara2013fblowups}
Nobuo Hara, Tadakazu Sawada, and Takehiko Yasuda.
\newblock ${F}$-blowups of normal surface singularities.
\newblock {\em Algebra \& Number Theory}, 7(3):733--763, 2013.

\bibitem[IK03]{ishii2003thenash}
Shihoko Ishii and J{\'a}nos Koll{\'a}r.
\newblock The {Nash} problem on arc families of singularities.
\newblock {\em Duke Mathematical Journal}, 120(3):601--620, December 2003.

\bibitem[IN96]{ito1996mckaycorrespondence}
Yukari Ito and Iku Nakamura.
\newblock {McKay} correspondence and {Hilbert} schemes.
\newblock {\em Japan Academy. Proceedings. Series A. Mathematical Sciences}, 72(7):135--138, 1996.

\bibitem[K{\k e}d11]{kkedzierski2011theghilbert}
Oskar K{\k e}dzierski.
\newblock The ${G}$-{Hilbert} scheme for $\frac{1}{r}(1,a,r-a)$.
\newblock {\em Glasgow Mathematical Journal}, 53(1):115--129, 2011.

\bibitem[Kol96]{Kollar-rational-curves}
J\'{a}nos Koll\'{a}r.
\newblock {\em Rational curves on algebraic varieties}, volume~32 of {\em Ergebnisse der Mathematik und ihrer Grenzgebiete. 3. Folge. A Series of Modern Surveys in Mathematics [Results in Mathematics and Related Areas. 3rd Series. A Series of Modern Surveys in Mathematics]}.
\newblock Springer-Verlag, Berlin, 1996.

\bibitem[LY23]{liedtkenoncommutative}
Christian Liedtke and Takehiko Yasuda.
\newblock Non-commutative resolutions of linearly reductive quotient singularities, 2023.
\newblock arXiv:2304.14711.

\bibitem[Mat02]{matsuki}
Kenji Matsuki.
\newblock {\em Introduction to the {M}ori program}.
\newblock Universitext. Springer-Verlag, New York, 2002.

\bibitem[OZ91]{oneto1991remarks}
A.~Oneto and E.~Zatini.
\newblock Remarks on {Nash} blowing-up.
\newblock In {\em Universit{\`a} e {Politecnico} di {Torino}. {Seminario} {Matematico}. {Rendiconti}}, volume~49, pages 71--82 (1993). 1991.
\newblock ISSN: 0373-1243 Journal Abbreviation: Rend. Sem. Mat. Univ. Politec. Torino.

\bibitem[PS15]{plenat2015thenash}
Camille Pl{\'e}nat and Mark Spivakovsky.
\newblock The {Nash} problem and its solution: a survey.
\newblock {\em Journal of Singularities}, 13, 2015.

\bibitem[Sch86]{schrijver1986theoryof}
Alexander Schrijver.
\newblock {\em Theory of linear and integer programming}.
\newblock Wiley-{Interscience} {Series} in {Discrete} {Mathematics}. John Wiley \& Sons, Ltd., Chichester, 1986.

\bibitem[{\v S}VdB17]{vspenko2017noncommutative}
{\v S}pela {\v S}penko and Michel Van~den Bergh.
\newblock Non-commutative resolutions of quotient singularities for reductive groups.
\newblock {\em Inventiones mathematicae}, 210(1):3--67, October 2017.

\bibitem[TY09]{toda2009noncommutative}
Yukinobu Toda and Takehiko Yasuda.
\newblock Noncommutative resolution, $\mathrm{F}$-blowups and ${D}$-modules.
\newblock {\em Advances in Mathematics}, 222(1):318--330, 2009.

\bibitem[VU06]{villamayoru.2006onflattening}
O.~Villamayor~U.
\newblock On flattening of coherent sheaves and of projective morphisms.
\newblock {\em Journal of Algebra}, 295(1):119--140, January 2006.

\bibitem[Yas07]{yasuda2007highernash}
Takehiko Yasuda.
\newblock Higher {Nash} blowups.
\newblock {\em Compositio Mathematica}, 143(6):1493--1510, 2007.

\bibitem[Yas12]{yasuda2012universal}
Takehiko Yasuda.
\newblock Universal flattening of {Frobenius}.
\newblock {\em American Journal of Mathematics}, 134(2):349--378, 2012.

\end{thebibliography}

\end{document}